\numberwithin{equation}{section}
\numberwithin{figure}{section}
\theoremstyle{plain}
\newtheorem{thm}{\protect\theoremname}
\theoremstyle{definition}
\newtheorem{defn}[thm]{\protect\definitionname}
\theoremstyle{remark}
\newtheorem{rem}[thm]{\protect\remarkname}
\theoremstyle{plain}
\newtheorem{lem}[thm]{\protect\lemmaname}
\theoremstyle{plain}
\newtheorem{prop}[thm]{\protect\propositionname}
\theoremstyle{plain}
\newtheorem{cor}[thm]{\protect\corollaryname}
\providecommand{\corollaryname}{Corollary}
\providecommand{\definitionname}{Definition}
\providecommand{\lemmaname}{Lemma}
\providecommand{\propositionname}{Proposition}
\providecommand{\remarkname}{Remark}
\providecommand{\theoremname}{Theorem}
\newcommand{\cB}{\mathcal{B}}
\newcommand{\cC}{\mathcal{C}}
\newcommand{\cF}{\mathcal{F}}
\newcommand{\cH}{\mathcal{H}}
\newcommand{\cI}{\mathcal{I}}
\newcommand{\cM}{\mathcal{M}}
\newcommand{\cP}{\mathcal{P}}
\newcommand{\cS}{\mathcal{S}}
\newcommand{\NN}{\mathbb{N}}
\newcommand{\PP}{\mathbb{P}}
\newcommand{\RR}{\mathbb{R}}
\newcommand{\dd}{\mathop{}\!\mathrm{d}}
\newcommand{\norm}[1]{\left\lVert #1\right\rVert}
\begin{document}
	
	\title[regularization of the wave equation]{Non-linear Young equations in the plane and pathwise regularization by noise for  the stochastic wave equation }
	\author{Florian Bechtold \and Fabian A. Harang \and Nimit Rana}
	%\date{\today}
	
	\address{Florian Bechtold, Nimit Rana: Fakult\"at f\"ur Mathematik,
		Universit\"at Bielefeld, 33501 Bielefeld, Germany}
	\email{fbechtold@math.uni-bielefeld.de, nrana@math.uni-bielefeld.de}
	
	\address{Fabian A. Harang: Department of Economics, BI Norwegian Business School, Handelshøyskolen BI, 0442, Oslo, Norway.}
	\email{fabian.a.harang@bi.no}

	\thanks{\emph{MSC 2020:} 60H50; 60H15; 60L90}

	\keywords{Regularization by noise, non-linear Young equations, Hyperbolic PDE, Stochastic field, Wave equation, Goursat problem}
	
	\begin{abstract}
		We study pathwise regularization by noise for equations on the plane in the spirit of the framework outlined by Catellier and Gubinelli in  \cite{Catellier2016}. To this end, we extend the notion  of non-linear Young equations to a two dimensional domain and prove existence and uniqueness of such equations. This concept is then used in order to prove regularization by noise for stochastic equations on the plane. The statement of regularization by noise is  formulated in terms of the regularity of the local time associated to the perturbing stochastic field. For this, we provide two quantified example: a fractional Brownian sheet and the sum of two one-parameter fractional Brownian motions.
		As a further illustration of our regularization results, we also prove well-posedness of a  1D non-linear wave equation with a noisy boundary given by fractional Brownian motions.
		A discussion of open problems and further investigations is provided.
	\end{abstract}

	\maketitle
	
	\tableofcontents{}
	
	\section{Introduction}
	
	\emph{Regularization by noise} is the study of  the potentially regularizing effect of irregular paths or stochastic processes on a-priori ill-posed Ordinary Differential Equations (ODEs) or Partial Differential Equations (PDEs). For illustration, consider the following differential equation
	\begin{equation}\label{eq:intro}
		\dd x_t = b(x_t)\dd t +\dd z_t, \quad t \geq 0,
	\end{equation}
	where $b$ is a non-linear function and $z$ is a continuous path. Zvonkin \cite{Zvonkin1974} initially observed that when $z$ is a Brownian motion, and the above equation is interpreted as a Stochastic Differential Equation (SDE) then strong existence and uniqueness holds even when $b$ is merely bounded and measurable. This is in contrast to the classical theory of ODEs (when $z\equiv 0$ in \eqref{eq:intro}), where one typically requires that $b$ is Lipschitz (or of similar regularity) in order to guarantee uniqueness, see e.g. \cite{Coddington1961}.  Similar regularization by noise phenomena were proved by Davie in  \cite{Davie2007_Unique}  where, in contrast to \cite{Zvonkin1974},   uniqueness of solutions to \eqref{eq:intro} was proven in a path-by-path manner when $b$ is a bounded and measurable function and $z$ is a continuous path sampled from the law of the Brownian motion. 
	We can therefore think of  $z$ as a process which might provide a regularizing effect on the drift coefficient $b$ such that a unique solution can be proven to exist, even when existence and/or uniqueness  fails in the classical setting ($z\equiv 0$). This field of study has since the initiation in \cite{Zvonkin1974} seen a rapid development, and investigations into the regularizing effect of various stochastic processes are by now a large field of research. 
	
	There is typically a clear distinction in the approach to proving such effects. The traditional approach is based on classical tools from stochastic analysis and probability theory, and the concept of solutions which are then investigated is in the sense of  (probabilistic) strong or weak solutions, see e.g. \cite{FGP2010}. More recently, by using  tools inspired by the theory of rough paths, much progress has been made towards understanding the path-by-path or pathwise regularization by noise effect. 
	The idea is first to identify a class of stochastic processes whose paths provide the desired regularization effect, and then solve the SDE \eqref{eq:intro} in a pathwise manner, see e.g. \cite{Catellier2016,galeati2020noiseless,HarangPerkowski2020} for some of the recent works in this direction. With the approach presented in these papers, the authors are able to give meaning to and prove pathwise  wellposedness of equations of the form of \eqref{eq:intro} even in the case when $b$ is truly a distribution (in the sense of generalized functions). A particularly interesting feature of this approach is the apparent connection between the regularization by noise effect and the regularity of the local time associated to noise source in the equation. 
	
	Regularization by noise has also been investigated extensively in the context of SPDEs. While there are certainly several papers studying the regularization by noise effect for parabolic SPDEs both from a probabilistic and pathwise perspective, see e.g. \cite{mueller,neuman,nualartyoussef}, and more recently \cite{Athreya20,joern,CatellierHarang21}, we will discuss here some of the development for regularization by noise for hyperbolic SPDEs, more closely related to the equations considered  in this article.

	Motivated by the pathwise techniques used for regularization by noise in \cite{Catellier2016,galeati2020noiseless,HarangPerkowski2020},  we will in this article extend the aforementioned techniques in order to prove pathwise regularization by noise results for stochastic differential equations on the plane of the form 
	\begin{equation}\label{eq:integral eq intro}
		x_t =\xi_t +\int_0^t b(x_s)\dd s + w_t, \quad t=(t_1,t_2)\in [0,T]^2,
	\end{equation}
	where $w:[0,T]^2\rightarrow \RR^d$ is an additive continuous field, and we use the notation $\int_0^t b(x_s)\dd s:=\int_0^{t_1}\int_0^{t_2} b(x_{s_1,s_2})\dd s_1\dd s_2$. The term $\xi:[0,T]^2\rightarrow \RR^d$ is a continuous field representing the boundary conditions of the equation. 
	In contrast to the probabilistic methodology used for regularization by noise for the stochastic heat equation in \cite{Athreya20} based on stochastic sewing lemma, our analysis of regularization will be done through an analysis of the regularity of the local time associated to the additive continuous field $w$, similarly as done for ODEs in \cite{Harang2021RegularityOL,HarangPerkowski2020}. In combination with an extension of the theory of non-linear Young equations to the plane, this will allow for a purely pathwise analysis of the regularization by noise phenomena for a class of hyperbolic equations. When considering stochastic equations,  this methodology can be interpreted in the spirit of rough paths theory: probabilistic considerations are only needed in order to prove pathwise regularity of the local time, and the analysis of the equation itself is done purely analytically.  While proving space time regularity of various stochastic processes has recently  been extensively investigated in several articles  \cite{galeati2020prevalence,Harang2021RegularityOL,HarangPerkowski2020}, similar analysis for stochastic fields have not yet received equal recent attention. In particular, with respect to the our framework, only partial results in this direction are known (see e.g. \cite{GemanHorowitz1980} and the discussion in Remark \ref{discussion local time} explaining why such results are not sufficient in our context). In this article we will focus on the analytical step of the pathwise regularization by noise program described above, while also providing new probabilistic space-time regularity estimates for the fractional Brownian sheet that allow to employ the analytic machinery developed (refer to Theorem \ref{regularity fBS}. Let us already mention at this point however, that we do not expect this probabilistic result to be optimal and further research in this direction seems to be in place (refer also the the final Section \ref{sec:challenges} for a brief discussion of potential approaches in this context). 
	
	Before motivating our approach in more detail, observe that the integral equation can be seen to be the integrated version of the so-called Goursat partial differential equation 
	\begin{equation*}
		\frac{\partial^2}{\partial t_1\partial t_2 } x_{t} =b(x_t)+\frac{\partial^2}{\partial t_1\partial t_2 }w_t, \quad t=(t_1,t_2)\in [0,T]^2,
	\end{equation*}
	with the boundary conditions $x_{(0,t_2)}=\xi_{(0,t_2)}$, $x_{(t_1,0)}=\xi_{(t_1,0)}$ and $\frac{\partial^2}{\partial t_1\partial t_2 } \xi=0$, and $w$ is zero on the boundary of $[0,T]^2$ (i.e. $w_{0,t_2}=w_{t_1,0}=0$ for all $t\in [0,T]^2$). This hyperbolic equation is fundamentally linked with the stochastic wave equation, which will be illustrated in detail below, see Theorem \ref{thm-wave equation intro}. 
	Furthermore, we see the regularization by noise problem for the  integral equation in \eqref{eq:integral eq intro} as a first step in order to prove regularization by noise for more complicated SPDEs driven by a stochastic field,
	see in particular Section \ref{sec:challenges} for a discussion of further development and open problems. 
	
	The integral equation in  \eqref{eq:integral eq intro} has been extensively studied from a probabilistic point of view, mostly in the setting where $w$ is a Brownian sheet, but also other processes have been considered. Specifically, in \cite{Yeh81,Yeh87} strong existence and pathwise uniqueness of solutions to equations of the form 
	\begin{equation*}
		x_t=\xi_t + \int_0^t b(x_s)\dd s + \int_0^t \sigma(x_s)\dd w_s, \quad t\in [0,T]^2, 
	\end{equation*}
	under the assumption that both $b$ and $\sigma$ are Lipschitz continuous and of linear growth, and $w$ is a Brownian sheet.  The same author proved existence of weak solutions to the above equation when $b$ is merely continuous together with a certain growth condition and a condition on the sixth moment of the boundary process $\xi$.  
	Strong existence and uniqueness has later been obtained in \cite{Nualart2003} when the drift $b$ is bounded and nondecreasing, and  $w$ is a rough fractional Brownian sheet ( i.e. with Hurst parameters $H_1,H_2\leq \frac{1}{2}$, see section \ref{sec:regularization SDE on plane} for further information about the fractional Brownian sheet).
	
	Based on a multi-parameter version of the sewing lemma constructed in \cite{Harang2020}, we will in this article  extend the framework of non-linear Young equations used in \cite{Catellier2016,galeati2020noiseless,HarangPerkowski2020} (see also \cite{Galeati2020NonlinearYD} for a complete overview in the one-parameter case) to two-parameter processes. 
	Given a function $A:[0,T]^2\times \RR^d \rightarrow \RR^n$ which is sufficiently regular in both time  and spatial arguments, and a sufficiently regular path $y:[0,T]^2\rightarrow \RR^d$, one can then construct a nonlinear Young integral on the plane of the form 
	\begin{equation*}
		\int_s^t A(\dd r,y_r)=\lim_{|\cP|\rightarrow 0} \sum_{ [u,v]\in \cP} \square_{u,v} A(\cdot, y_u),
	\end{equation*}
	where $\cP$ is a partition of  the rectangle $[s_1,t_1]\times[s_2,t_2]$ consisting of rectangles of the form $[u,v]=[u_1,v_1]\times [u_2,v_2]$ for $[u_i,v_i]\subset [s_i,t_i]$ with $i=1,2$,  and the limit is taken as the mesh of the partition goes to zero. The operator $\square$ denotes the rectangular increment and is defined for $s,t\in [0,T]^2$ by 
	\begin{equation*}
		\square_{s,t} f : =f(t_1,t_2)-f(t_1,s_2)-f(s_1,t_2)+f(s_1,s_2), 
	\end{equation*}
	for any function $f$ on the plane $[0,T]^2$. 
	With the goal of proving pathwise existence and uniqueness of \eqref{eq:integral eq intro} when $b$ is a distribution, a crucial step will be to give meaning to the integral term.
	By setting $\theta=x-w$ we observe that, formally, $\theta$ solves the equation  
	\begin{equation}\label{eq:transformed intro}
		\theta_t=\xi_t+\int_0^t b(\theta_s+w_s)\dd s. 
	\end{equation}
	The integral appearing on the right hand side may be easier to handle due to its connection with the local time associated to the path $w$. Indeed, recall that the local time formula tells us that for each $x\in \RR^d$ we have
	\begin{equation*}
		\int_s^t b(x+w_r)\dd r = (b\ast \square_{s,t} L^{-w})(x),
	\end{equation*}
	where $\ast$ denotes the usual convolution. We then observe that by Young's convolution inequality in Besov spaces, the regularity of the mapping  $x\mapsto \int_s^t b(x+w_r)\dd r $ is given as a sum of the spatial regularity exponents of $b$ and $L^{-w}$. Thus, if $L^{-w}$ is a sufficiently smooth function, the convolution $b\ast L^{-w}$ may be a differentiable function, even when $b$ is a distribution.  
	
	The nonlinear Young integral can therefore be used to give meaning to the integral appearing in \eqref{eq:transformed intro} by setting $A(t,x)=b\ast L^{-w}_t(x)$. If $\theta:[0,T]^2\rightarrow \RR^d$ is a sufficiently regular path, we  then define  
	\begin{equation}\label{eq:NLYintegral intro}
		\int_0^t b(\theta_s+w_s)\dd s=\int_0^t (b\ast L^{-w}) (\dd s,\theta_s)=\lim_{|\cP|\rightarrow 0} \sum_{[u,v]\in \cP} b\ast \square_{u,v} L^{-w}(\theta_u).
	\end{equation}
	In subsequent sections we show also that this integral coincides with the classical Riemann integral whenever $b$ is a continuous function. 
	Now existence and uniqueness of \eqref{eq:transformed intro} is granted under sufficient regularity conditions on $b$ and the local time $L^{-w}$.  We illustrate this by highlighting one of the main results in this article, after some notational definitions.

	Let $E$ be a Banach space. For $\gamma\in (0,1)^2$  we denote by $C^\gamma([0,T]^2; E)$ the set of $E$-valued two parameter {\em jointly} $\gamma$-H\"older continuous functions, proposed in Definition \ref{eq: 2D Holder space}. We denote the usual Besov spaces by $B_{p,q}^\alpha(\RR^d;\RR^n)$, refer Section \ref{sec:notation} for their definition. The first main result about of the current paper can be stated as follows, see Theorem \ref{thm:existence and uniqueness} for the precise
	formulation.
	\begin{thm}\label{thm-existence and uniqueness intro}
		Let $T>0$. Consider parameters $\alpha,\zeta\in \RR$, $\gamma=(\gamma_1,\gamma_2)\in (\frac{1}{2},1)^2$ and $\eta=(\eta_1,\eta_2)\in (0,1]^2$ such that for $i=1,2$ the following two conditions hold
		\begin{equation*}
			\alpha+\zeta\geq 2+\eta_i,\quad \mathrm{and} \quad (1+\eta_i)\gamma_i>1.  
		\end{equation*}
		Let $p,q\in [1,\infty]$ with $\frac{1}{p}+\frac{1}{q}=1$, and  suppose  that $w\in C([0,T]^2;\RR^d)$ has an associated local time $L^{-w}\in C^\gamma([0,T]^2; B^\alpha_{q,q}(\RR^d;\RR^d))$.  Then for every $\xi\in C^\gamma([0,T]^2;\RR^d)$, and any $b\in B^\zeta_{p,p}(\RR^d;\RR^d)$  there exists a unique solution $x\in C([0,T]^2;\RR^d)$ to equation \eqref{eq:integral eq intro}. More precisely, there exists a $\theta\in C^\gamma([0,T]^2;\RR^d)$ with the property that $x=w+\theta$, and $\theta$  satisfies 
		\begin{equation*}\label{eq:thm1 NLY regularized}
			\theta_t=\xi_t+\int_0^t (b\ast L^{-w})(\dd s,\theta_s), \quad t\in [0,T]^2,
		\end{equation*}
		where the integral is interpreted in the nonlinear Young sense, as in \eqref{eq:NLYintegral intro}.
	\end{thm}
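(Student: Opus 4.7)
The plan is to solve the equation by converting it, via the shift $\theta := x - w$, into the nonlinear Young equation
$$\theta_t = \xi_t + \int_0^t A(\dd s, \theta_s), \qquad A(t,x) := (b \ast L^{-w}_t)(x),$$
and then setting up a Banach fixed point in a suitable subspace of $C^\gamma([0,T]^2; \RR^d)$. The reduction is justified by the local time identity $\int_s^t b(z + w_r)\,\dd r = (b \ast \square_{s,t} L^{-w})(z)$, which holds on the nose for smooth $b$ and then extends by density once $A$ is shown to have enough regularity.

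The first technical step is to establish that $A$ has enough spatio-temporal regularity for the 2D nonlinear Young integral to make sense. By Young's convolution inequality in Besov spaces with dual exponents $\frac{1}{p} + \frac{1}{q} = 1$, the rectangular increment $\square_{s,t} A$ lies in $B^{\alpha+\zeta}_{\infty,\infty}$ with norm controlled by $\|b\|_{B^\zeta_{p,p}} \|\square_{s,t} L^{-w}\|_{B^\alpha_{q,q}}$. Combining this with the hypothesis $L^{-w} \in C^\gamma([0,T]^2; B^\alpha_{q,q})$, the condition $\alpha + \zeta \geq 2 + \eta_i$, and the Besov embedding into classical Hölder spaces, one obtains $A \in C^\gamma([0,T]^2; C^{2+\eta_i}_b(\RR^d; \RR^d))$ for $i=1,2$. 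This is exactly the regularity class for which the two-parameter nonlinear Young integral \eqref{eq:NLYintegral intro} is defined via the multi-parameter sewing lemma of \cite{Harang2020}: applied to $\theta \in C^\gamma([0,T]^2; \RR^d)$, the germ $\Xi_{u,v} := \square_{u,v} A(\cdot, \theta_u)$ has a defect controlled in each coordinate by $\|\theta\|_{C^\gamma}^{\eta_i}(v_i - u_i)^{(1+\eta_i)\gamma_i}$, and the strict inequality $(1+\eta_i)\gamma_i > 1$ makes these defects summable and produces an integral that is itself $\gamma$-Hölder in its upper endpoint.

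With the integral well defined, I set $\Gamma(\theta)_t := \xi_t + \int_0^t A(\dd s, \theta_s)$ on $C^\gamma([0,T_0]^2; \RR^d)$ for a small rectangle $[0,T_0]^2$. The sewing estimate yields a bound of the form $\|\Gamma(\theta)\|_{C^\gamma} \lesssim \|\xi\|_{C^\gamma} + T_0^{\delta} \|A\|_{C^\gamma C^{2+\eta_i}}(1 + \|\theta\|_{C^\gamma}^{\eta_i})$ for some $\delta > 0$ coming from the slack in $(1+\eta_i)\gamma_i > 1$, so $\Gamma$ preserves a sufficiently large ball once $T_0$ is small. Lipschitz dependence of $A(\cdot, \theta) - A(\cdot, \tilde\theta)$ on $\theta - \tilde\theta$---here I pay one spatial derivative of $A$, which is available because $A \in C^{2+\eta_i}$ rather than merely $C^{1+\eta_i}$---gives the contraction estimate on the same short rectangle, and the Banach fixed point theorem produces a unique $\theta \in C^\gamma([0,T_0]^2; \RR^d)$. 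Piecing together such local solutions along a finite grid of rectangles, with the solution already constructed on the left and bottom edges of each new rectangle playing the role of updated boundary data $\xi$ (which remains in $C^\gamma$ by construction), extends the solution to all of $[0,T]^2$.

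Finally, one checks that $x := \theta + w$ solves \eqref{eq:integral eq intro}: approximating $b$ by smooth $b_n$, the nonlinear Young and Riemann interpretations coincide for each $b_n$, and continuity of the nonlinear Young integral in the Besov norm of $b$ (which follows from Young's convolution inequality combined with the sewing bound) permits passage to the limit. The main obstacle I anticipate lies in the fixed point step: unlike the one-parameter case, smallness on a rectangle $[0,T_0]^2$ is driven by two independent Hölder exponents $\gamma_1, \gamma_2$, and concatenating local solutions requires simultaneously matching two families of boundary data; keeping the $C^\gamma$ seminorms uniformly controlled while iterating across the grid, and ensuring that uniqueness (not just existence) transfers from local rectangles to the whole domain, is where the bulk of the care is required.
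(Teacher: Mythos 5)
Your proposal is correct and follows essentially the same route as the paper: the shift $\theta = x - w$, the identification $A(t,\cdot)=b\ast L^{-w}_t$ with $A\in C^\gamma_t C^{2+\eta}_x$ via Young's convolution inequality (Proposition \ref{prop:conv NLY integral}), the construction of the 2D nonlinear Young integral through the multi-parameter sewing lemma (Proposition \ref{prop. 2d NLY integral}), and a Picard fixed point on small rectangles followed by patching (Theorem \ref{thm:nly equation}). The points you flag as delicate --- the extra spatial derivative needed for the contraction estimate and the iteration of local solutions with updated boundary data across a grid of rectangles --- are precisely the ones the paper handles via Proposition \ref{prop:stability} and the scheme of \cite[Theorem 25]{Harang2020}.
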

	\begin{rem}
		Note that Uniqueness then holds in the class of functions of the form $x=w+\theta$ with $\theta\in C^\gamma$ satisfying \eqref{eq:thm1 NLY regularized}.
	\end{rem}
	As is clear from the above theorem, a second crucial ingredient in our study of regularization by noise in the plane consists of regularity estimates for the local time time associated with stochastic fields. Our second main result can be stated as follows (see Theorem \ref{regularity fBS})
	\begin{thm}
		\label{local time intro}
		Let $w:[0,T]^2\to \RR^d$ be a fractional Brownian sheet of Hurst parameter $H=(H_1, H_2)$ on $(\Omega, \mathcal{F}, \mathbb{P})$. Suppose that 
		\[
		\lambda<\frac{1}{2(H_1\vee H_2)}-\frac{d}{2}.
		\]
		Then for almost every $\omega\in \Omega$, $w$ admits a local time $L$ such that for $\gamma_1\in (1/2, 1-(\lambda+\frac{d}{2})H_1)$ and $\gamma_2\in (1/2, 1-(\lambda+\frac{d}{2})H_2)$ 
		\[
		\norm{\square_{s,t} L}_{H^\lambda_x}\lesssim (t_1-s_1)^{\gamma_1}(t_2-s_2)^{\gamma_2}
		\]
	\end{thm}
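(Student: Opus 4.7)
The plan is to proceed via Fourier analysis of the (formal) occupation density $L_t(x) = \int_0^{t_1}\int_0^{t_2}\delta(w_u-x)\,du$. Setting $\hat L_t(\xi):=\int_0^{t_1}\int_0^{t_2}e^{-i\xi\cdot w_u}\,du$, the rectangular increment satisfies $\square_{s,t}\hat L(\xi)=\int_{s_1}^{t_1}\int_{s_2}^{t_2}e^{-i\xi\cdot w_u}\,du$, so by Parseval
\[
\norm{\square_{s,t}L}_{H^\lambda_x}^2 \,=\, c_d\int_{\RR^d}(1+|\xi|^2)^\lambda|\square_{s,t}\hat L(\xi)|^2\,d\xi.
\]
The task then reduces to estimating $L^{2p}(\Omega)$ moments of the right-hand side and upgrading to pathwise Hölder regularity via a two-parameter Kolmogorov-type criterion.

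For the $L^2(\Omega)$ estimate, taking expectation, using Fubini, and exploiting that the $d$ components of $w$ are independent centered Gaussians with common increment variance $\sigma^2(u,v):=\EE|w_u^1-w_v^1|^2$, one obtains
\[
\EE\norm{\square_{s,t}L}_{H^\lambda_x}^2 \,=\, c_d\iint_{R\times R}\int_{\RR^d}(1+|\xi|^2)^\lambda e^{-|\xi|^2\sigma^2(u,v)/2}\,d\xi\,du\,dv,
\]
with $R=[s_1,t_1]\times[s_2,t_2]$. A standard Gaussian integral estimate bounds the inner $\xi$-integral by $C(1+\sigma(u,v)^{-(d+2\lambda)})$ provided $\lambda>-d/2$, leaving the task of controlling $\iint_{R\times R}\sigma(u,v)^{-(d+2\lambda)}du\,dv$.

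The crucial probabilistic input is a local nondeterminism type lower bound for the sheet's increment variance, namely
\[
\sigma^2(u,v)\,\geq\, c\max\bigl(|u_1-v_1|^{2H_1}(u_2\wedge v_2)^{2H_2},\,|u_2-v_2|^{2H_2}(u_1\wedge v_1)^{2H_1}\bigr),
\]
which I would derive directly from the product covariance $R_{H_1}(u_1,v_1)R_{H_2}(u_2,v_2)$. Applying $\max(a,b)\geq\sqrt{ab}$ yields the factorized upper bound
\[
\sigma(u,v)^{-(d+2\lambda)}\lesssim\prod_{i=1,2}|u_i-v_i|^{-H_i(d+2\lambda)/2}(u_i\wedge v_i)^{-H_i(d+2\lambda)/2}.
\]
The resulting integrals then decouple, and a Beta-function computation gives the bound $\int_{s_i}^{t_i}\int_{s_i}^{t_i}|u-v|^{-a}(u\wedge v)^{-b}du\,dv\lesssim(t_i-s_i)^{2-a-b}$ whenever $a,b<1$. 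The standing hypothesis $\lambda<\frac{1}{2(H_1\vee H_2)}-\frac{d}{2}$ precisely ensures $H_i(d+2\lambda)<1$ for both $i$, so all integrals converge and together yield
\[
\EE\norm{\square_{s,t}L}_{H^\lambda_x}^2 \,\lesssim\, (t_1-s_1)^{2-H_1(d+2\lambda)}(t_2-s_2)^{2-H_2(d+2\lambda)}.
\]

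Finally, to obtain the almost sure statement with exponents $\gamma_i<1-H_i(\lambda+d/2)$, I would extend the estimate to $L^{2p}(\Omega)$ for arbitrary $p\geq 1$. Two routes are available: either repeat the Fourier computation on the $p$-fold product (reducing matters to a higher-dimensional Gaussian integral still controlled by the same local nondeterminism bound), or invoke Gaussian hypercontractivity for the $H^\lambda$-valued functional $\square_{s,t}L$ of the Gaussian field $w$. Plugging the resulting moment bound into a two-parameter Kolmogorov continuity criterion in the separable Hilbert space $H^\lambda(\RR^d;\RR^d)$ and sending $p\to\infty$ yields the claimed pathwise Hölder regularity. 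The main technical obstacle is establishing the local nondeterminism estimate with the correct form of the mixed factors $(u_i\wedge v_i)$: the product structure of the sheet's covariance makes this more delicate than the classical fBm case, and particular care is needed near the axes $\{u_i=0\}$ where the covariance degenerates.
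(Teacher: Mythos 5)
Your route is genuinely different from the paper's. The paper does not compute with the increment variance of the sheet at all: it takes the one-dimensional moment bound $\norm{\int_{s_1}^{t_1}e^{i\alpha z\cdot B^{H_1}_r}dr}_{L^m(\Omega)}\lesssim (1+|z|^2)^{-\lambda'/2}|\alpha|^{-\lambda'}(t_1-s_1)^{1-\lambda'H_1}$ from \cite{HarangPerkowski2020} (itself proved via the stochastic sewing lemma), observes that for fixed $r_2$ one has $w_{r_1,r_2}\overset{d}{=}r_2^{H_2}B^{H_1}_{r_1}$, and then integrates in $r_2$ using Minkowski's integral inequality to get $\norm{\int_R e^{iz\cdot w_r}dr}_{L^m}\lesssim(1+|z|^2)^{-\lambda'/2}(t_1-s_1)^{1-\lambda'H_1}(t_2-s_2)^{1-\lambda'H_2}$; a second application of Minkowski converts this scalar bound into a bound on $\EE[\norm{\square_{s,t}\mu}_{H^s}^p]$, and the joint two-parameter Kolmogorov theorem of \cite{HU20133359} finishes the proof. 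The advantage of that route is that all $L^m(\Omega)$ moments come for free from the sewing lemma; the price is that no genuinely two-dimensional cancellation is used (the paper's own remark acknowledges this). Your direct Fourier/occupation-density computation with the two-point lower bound $\sigma^2(u,v)\geq c\max\bigl(|u_1-v_1|^{2H_1}(u_2\wedge v_2)^{2H_2},|u_2-v_2|^{2H_2}(u_1\wedge v_1)^{2H_1}\bigr)$ is correct at the $L^2(\Omega)$ level: the bound itself is true (it follows, e.g., from the Volterra representation of the sheet by restricting the white noise to the strip $(u_1\wedge v_1,u_1\vee v_1]\times[0,u_2\wedge v_2]$ and invoking one-parameter strong local nondeterminism), your Beta-function estimate is right, and the exponents $2-H_i(d+2\lambda)=2(1-(\lambda+\tfrac d2)H_i)$ match the claim.

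The gap is in the passage to higher moments, and as stated neither of your two proposed routes works. Gaussian hypercontractivity does not apply directly: $\square_{s,t}\hat\mu(\xi)=\int_R e^{-i\xi\cdot w_u}du$ is not an element of a fixed Wiener chaos, so you cannot simply multiply the $L^2$ bound by a $(p-1)^{n/2}$ factor (one can push a chaos-expansion argument through, but that requires summing the expansion of $e^{-i\xi\cdot w_u}$ with quantitative control of each chaos level, which is a separate piece of work). The ``$p$-fold product'' route requires bounding $\EE\exp\bigl(-\tfrac12\mathrm{Var}(\sum_j\xi_j\cdot(w_{u^j}-w_{v^j}))\bigr)$ over $2p$ time points, and for that the two-point estimate you derive is not enough: you need a multi-point \emph{sectorial} local nondeterminism inequality for the sheet (conditional variance of $w_u$ given finitely many other points bounded below by $\sum_i\min_k|u_i-t^k_i|^{2H_i}$), together with a quantified treatment of the degeneration near the axes. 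Such estimates exist in the literature (Wu--Xiao, and \cite{Ayache2008} cited in the paper), so the gap is fillable, but it is a substantially stronger input than what you prove, and without it the Kolmogorov step cannot be launched. If you want to salvage your computation with minimal extra machinery, the cleanest fix is to imitate the paper's use of Minkowski's inequality to reduce $\EE[\norm{\square_{s,t}L}_{H^\lambda}^{2p}]$ to $L^{2p}(\Omega)$ bounds of the scalar $\square_{s,t}\hat\mu(\xi)$ for fixed $\xi$, and then obtain those scalar moments either from multi-point local nondeterminism or, as the paper does, from the stochastic sewing lemma in the first time variable combined with self-similarity in the second.
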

	Combining Theorems \ref{thm-existence and uniqueness intro} and \ref{local time intro}, we immediately obtain a regularization by noise result for stochastic differential equations in the plane perturbed by an additive fractional Brownian sheet. 
	As a further interesting application of  Theorem \ref{thm-existence and uniqueness intro}, we also study the Goursat boundary regularization of wave equation with singular non-linearities.  More precisely, for a distributional non-linearity $h$, we study the problem
	\begin{equation}\label{eq: wave eq, w/noisy boundary intro}
		\begin{split}
			\left( \frac{\partial^2}{\partial x^2}- \frac{\partial^2}{\partial y^2}\right)u=h(u(x,y)),
		\end{split}
	\end{equation}
	on $(x, y)\in R_{\frac{\pi}{4}}\circ [0,T]^2$  subject to the Goursat boundary conditions
	\begin{equation}
		\begin{split}
			u(x,y)&=\beta^1(y) \qquad \text{if}\ y=x\\
			u(x,y)&=\beta^2(y) \qquad \text{if}\ y=-x.
		\end{split}
		\label{boundary wave eqn intro}
	\end{equation}
	Here $R_{\frac{\pi}{4}}$ denotes the rotation operator of the plane by $\pi/4$. 
	
	The wave equation in \eqref{eq: wave eq, w/noisy boundary intro} with random boundary conditions \eqref{boundary wave eqn intro} arises in the literature on the splitting method to construct a non-continuous approximation of the solution of a stochastic Goursat problem, see e.g. \cite{AGW1999}. Thus, in our understanding, the analysis of this problem will help to apply the splitting method to study Goursat problem with a distributional non-linearity perturbed by a suitable fractional Brownian sheet. However, in case $h$ is a true distribution it is even unclear how to define a solution to this problem and prove its well-posedness by using classical methods. 
	To circumvent this issue, the main observation in our analysis is that the one-dimensional wave equation \eqref{eq: wave eq, w/noisy boundary intro} with the above prescribed Goursat boundary condition \eqref{boundary wave eqn intro} can be transformed into a Goursat PDE, accessible by our previous Theorem \ref{thm-existence and uniqueness intro}, as shown in Section \ref{sec:wave eqn}. 
	In particular, assuming $\beta^1, \beta^2$ to be sufficiently regularizing meaning they admit sufficiently regular local times, said Theorem \ref{thm-existence and uniqueness intro} allows to establish existence and uniqueness to the problem 
	\begin{equation}
		\label{limit ode intro}
		\psi_t=-2\int_0^t (h*L)(ds, \psi_s),
	\end{equation}
	where $L_t(x)=L^{-\beta^1(\cdot/\sqrt{2})}*L^{-\beta^2(\cdot/\sqrt{2})}(x)$, and the integral is interpreted in the non-linear Young sense. Then the second main result 
	result of this paper is as follows,  see Theorem \ref{thm-wave equation} for details and the precise formulation.
	\begin{thm}\label{thm-wave equation intro}
		Assume that the parameters $p,q,\zeta,\gamma$ and $\alpha$ belong to the range as in Theorem \ref{thm-existence and uniqueness intro}. Suppose $h\in B^\zeta_{p,p}(\RR^d)$ and suppose that $L_t(x)=(L^{-\beta^1(\cdot/\sqrt{2})}_{t_1}*L^{-\beta^2(\cdot/\sqrt{2})}_{t_2})(x)$ satisfies $L\in C^\gamma_tB^\alpha_{q,q}(\RR^d)$. 
		Then there exists a unique solution $u$ to \eqref{eq: wave eq, w/noisy boundary intro}, in the sense of Definition \ref{notion of solution wave}, given by %
		\[
		u(x,y):=\psi(\frac{y+x}{\sqrt{2}}, \frac{y-x}{\sqrt{2}})+\beta^1(\frac{y+x}{\sqrt{2}})+\beta^2(\frac{y-x}{\sqrt{2}}),
		\]
		where $\psi$ is the unique solution to \eqref{limit ode intro}, interpreted in the nonlinear Young sense. 
	\end{thm}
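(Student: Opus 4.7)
The plan is to reduce the wave problem \eqref{eq: wave eq, w/noisy boundary intro}--\eqref{boundary wave eqn intro} to an integral equation of the form handled by Theorem \ref{thm-existence and uniqueness intro} via a d'Alembert-style change of variables. Setting $s=(y+x)/\sqrt 2$ and $t=(y-x)/\sqrt 2$, a chain-rule computation yields $\partial_x^2-\partial_y^2=-2\,\partial_s\partial_t$, while $R_{\pi/4}$ maps $[0,T]^2$ in the $(s,t)$-plane bijectively onto the given domain in the $(x,y)$-plane and sends the characteristic lines $y=\pm x$ to the coordinate axes $\{t=0\}$ and $\{s=0\}$. Writing $\tilde u(s,t):=u(x(s,t),y(s,t))$, the transformed equation becomes the Goursat problem $-2\,\partial_s\partial_t\tilde u=h(\tilde u)$ on $[0,T]^2$ with $\tilde u(s,0)=\beta^1(s/\sqrt 2)$ and $\tilde u(0,t)=\beta^2(t/\sqrt 2)$.

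To separate the boundary data from the unknown, I would set $w(s,t):=\beta^1(s/\sqrt 2)+\beta^2(t/\sqrt 2)$ and $\psi(s,t):=\tilde u(s,t)-w(s,t)$. Because $w$ has separated-variable structure, $\partial_s\partial_t w=0$, and one obtains the formal Goursat problem $-2\,\partial_s\partial_t\psi=h(\psi+w)$ with $\psi$ vanishing on the two coordinate axes up to additive constants $-\beta^i(0)$, which can be absorbed as a $C^\gamma$ boundary term $\xi$ satisfying $\square\xi\equiv 0$. Applying the rectangular increment operator $\square_{0,(s,t)}$ to this identity then produces an integral equation for $\psi$ of exactly the shape of \eqref{eq:integral eq intro}, with additive field $w$ and drift proportional to $h$.

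The remaining step before invoking Theorem \ref{thm-existence and uniqueness intro} is to identify the relevant local time. Using a Fourier representation of the delta distribution together with the tensorised form of $w$, one directly computes
\[
L^{-w}_{(s,t)}(x)=\int_0^s\!\!\int_0^t\delta\bigl(x+\beta^1(r_1/\sqrt 2)+\beta^2(r_2/\sqrt 2)\bigr)\,\dd r_1\,\dd r_2=\bigl(L^{-\beta^1(\cdot/\sqrt 2)}_{s}*L^{-\beta^2(\cdot/\sqrt 2)}_{t}\bigr)(x),
\]
so $L^{-w}$ coincides with the object $L$ appearing in the statement. Under the hypothesis $L\in C^\gamma_tB^\alpha_{q,q}$, the non-linear Young rewriting $\int_0^t h(\psi_r+w_r)\,\dd r=\int_0^t(h*L)(\dd r,\psi_r)$ is well defined by the planar non-linear Young machinery developed earlier in the paper, and Theorem \ref{thm-existence and uniqueness intro} delivers a unique $\psi\in C^\gamma([0,T]^2;\mathbb R^d)$ solving \eqref{limit ode intro}, the numerical constant simply tracking the coefficient inherited from the change of variables.

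To conclude, one defines $u$ from $\psi$ via the formula in the statement and reverses the change of variables to check that $u$ solves \eqref{eq: wave eq, w/noisy boundary intro} in the sense of Definition \ref{notion of solution wave}; uniqueness for $u$ transfers from uniqueness for $\psi$ because the substitution and the additive decomposition are bijective on the appropriate regularity classes. The main obstacle I expect is not in the (classical) chain-rule manipulation itself but in its distributional interpretation when $h$ has negative regularity: one must verify that the non-linear Young formulation of the integrated equation is truly equivalent to \eqref{eq: wave eq, w/noisy boundary intro} as formalised in Definition \ref{notion of solution wave}, and that the $C^\gamma$-boundary adjustments absorbing the constants $\beta^i(0)$ fit into the framework of Theorem \ref{thm-existence and uniqueness intro} without eroding the regularity budget $\alpha+\zeta\geq 2+\eta_i$.
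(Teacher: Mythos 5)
Your reduction is the same as the paper's: rotate by $\pi/4$ so that $\partial_x^2-\partial_y^2=-2\,\partial_{t_1}\partial_{t_2}$ and the characteristics become the coordinate axes, subtract the boundary data $w_t=\beta^1(t_1/\sqrt2)+\beta^2(t_2/\sqrt2)$, identify $L^{-w}$ as the convolution $L^{-\beta^1(\cdot/\sqrt2)}_{t_1}*L^{-\beta^2(\cdot/\sqrt2)}_{t_2}$ (the paper proves this as Lemma \ref{lem:fbmconv} by applying the occupation-times formula twice, which is the rigorous version of your delta-function computation), and then apply Theorem \ref{thm:existence and uniqueness} to the resulting planar non-linear Young equation \eqref{limit ode intro} to obtain a unique $\psi\in C^\gamma$. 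Up to that point your argument matches the paper.

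The genuine gap is the final step, which you explicitly flag as ``the main obstacle I expect'' but do not resolve. Definition \ref{notion of solution wave} does not define a solution of \eqref{eq: wave eq, w/noisy boundary intro} as something you can verify by ``reversing the change of variables'': for distributional $h$ the PDE has no direct meaning, and the definition instead declares $u$ to be a solution if and only if, for \emph{every} mollifier sequence $\rho^\epsilon$, the classical solutions $u^\epsilon$ of the problem with nonlinearity $h^\epsilon=h*\rho^\epsilon$ converge to $u$ uniformly. So the proof must produce the mollified solutions $\psi^\epsilon$ of \eqref{problem to solve} (each of which is, by Lemma \ref{consistency 2d nonlinear young} and Proposition \ref{prop:conv NLY integral}, also the solution of the corresponding non-linear Young equation with drift $h^\epsilon*L$) and then show $\psi^\epsilon\to\psi$ uniformly with a rate independent of the mollifier. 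This is exactly what the stability results supply: Proposition \ref{stability of solutions} and Corollary \ref{cor. stability} give $\|\psi^\epsilon-\psi\|_{\infty}\lesssim\|h^\epsilon-h\|_{B^\zeta_{p,p}}\to 0$, and since the map $\psi\mapsto u$ in the statement is an isometry for the uniform norm up to the fixed boundary terms, $u^\epsilon\to u$ uniformly follows, which is precisely membership in the solution class of Definition \ref{notion of solution wave} (and uniqueness is automatic from that definition). Without invoking this stability estimate your construction produces \emph{a} function $u$ but does not establish that it is a solution in the required sense; note also that the stability step is where the stronger spatial regularity $C^\gamma_tC^{2+\eta}_x$ of $h*L$ (hence the full budget $\alpha+\zeta\geq 2+\eta$, not merely $1+\eta$) is consumed, so it cannot be dismissed as a soft limiting argument.
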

	\begin{rem}
		In Theorem \ref{thm:fbm reg} we provide specific conditions under which sample paths of the fractional Brownian motion may be used as boundary processes $\beta^1$ and $\beta^2$ thus providing one example of  stochastic paths which fulfills the conditions in Theorem \ref{thm-wave equation intro}. However, the class of stochastic processes providing such regularizing effects is by now well studied, see e.g. \cite{galeati2020prevalence,Harang2021RegularityOL,HarangPerkowski2020}, and we therefore do not study such processes in more detail here. 
	\end{rem}
	\begin{rem}
		Note that by developing a Young integration theory in two dimensions, whose proof is on similar lines, as presented in Proposition \ref{prop. 2d NLY integral}, the authors of \cite{Quer-Tindel2007} have shown the existence of a unique solution to a non-linear one dimensional wave equation driven by an arbitrary signal whose rectangular increments satisfy some H\"older  continuous with H\"older exponent greater than $\frac{1}{2}$. Similar results for one dimensional stochastic geometric wave equation have also been presented by the last author of this paper, in collaboration with Brze\'zniak,  in  \cite{Nimit2020} where the noise is modelled by a fractional Brownian sheet of Hurst parameters greater than $\frac{3}{4}$. In contrast to Proposition \ref{prop. 2d NLY integral}, to achieve the existence of a unique local solution they extend the theory of pathwise stochastic integrals in Besov spaces to two dimensional setting.  However, the present work focuses on the regularization by boundary conditions in the context of 1D wave equation (which in integral form can be seen as an additive perturbation of the wave equation), our Theorem~\ref{thm-wave equation intro} is fundamentally different from the main results of \cite{Nimit2020,Quer-Tindel2007} where a multiplicative noise is considered. Hence, the results presented in this article are not comparable with their results in a straightforward manner.
	\end{rem}

	This paper is organized in six sections. Section \ref{sec:notation} covers the notation and the required definitions used in the paper. 
	Section \ref{sec:2DNLY} is devoted to the extension of nonlinear Young theory to two dimensional integrands, and in particular prove existence, uniqueness and stability of  non-linear Young integral equations. In Section \ref{sec:regularization SDE on plane} we give a rigorous concept of solution to \eqref{eq:integral eq intro} and prove the regularization by noise effect under certain conditions on the local time associated to the noise.  We moreover provide a quantitative regularity estimate for the local time associated with the fractional Brownian sheet which can then be employed in the study of the aforementioned regularization by noise phenomenon. 
	In Section \ref{sec:wave eqn} we demonstrate how the theory of 2D nonlinear Young equations can be employed in the study of regularization of the wave equation with a noisy Goursat  type boundary condition. In particular, wellposedness of this equation when the nonlinear coefficient is a distribution and the boundary processes are given as rough fractional Brownian motions is proven. 
	We conclude the paper with Section \ref{sec:challenges} in which we discuss further extensions of our results and other related challenging open problems.

	\section{Notation} \label{sec:notation}
	We will work with a partial ordering of points in the rectangle $[0,T]^2$, in the sense that for $s,t\in [0,T]^2$ the notation  $s<t$ means that $s_1<t_1$ and $s_2<t_2$.
	We will work in a two-parameter setting, and will therefore frequently work with rectangles as opposed to intervals. For $s=(s_1, s_2)$ and $t=(t_1, t_2)$ with $s<t$, we define  $[s,t]\subset [0,T]^2$  by $[s,t]=[s_1,t_1]\times[s_2,t_2]$. We therefore consider $[s,t]$ to be the rectangle spanned by the lower left point $(s_1, s_2)$ and the upper right point $(t_1, t_2)$.   We will refer to the set $\{0\}\times [0,T]$ and $[0,T]\times \{0\}$ as the boundary of $[0,T]^2$. 
	For two numbers $a$ and $b$ the notation  $a\lesssim b$ (or $a\sim b$) means that there exists a constant $C>0$ such that $a\leq Cb$ (or $(a=Cb$). If the constant $C$ depends on an important parameter $k$ we use the notation $\lesssim_k$ (or $\sim_k$). 
	
	For a function $A:[0,T]^2\times [0,T]^2\rightarrow \RR^d$, we set
	\[
	\square_{s,t}A:=A_{(s_1, s_2), (t_1, t_2)}.
	\]
	We will denote the increment of a function $f:[0,T]^2\rightarrow \RR^d $ over a rectangle $[s,t]$ by 
	\begin{equation*}
		\square_{s,t} f=f_{t_1,t_2}-f_{t_1,s_2}-f_{s_1,t_2}+f_{s_1,s_2},
	\end{equation*}
	which canonically generalizes the notion of an increment in the two dimensional setting.
	This type of increment satisfies certain important properties which will be used throughout the article, and we therefore comment on some of these properties here. 
	
	If the mixed partial derivative $\frac{\partial^2 f(t_1,t_2)}{\partial t_1\partial t_2} $ exists for all $t\in [0,T]^2$, then it is readily seen that 
	\begin{equation*}
		\square_{s,t} f = \int_s^t \frac{\partial^2 f(r_1,r_2)}{\partial r_1\partial r_2}\dd r, 
	\end{equation*}
	where we use the double-integral notation $\int_s^t :=\int_{s_1}^{t_1}\int_{s_2}^{t_2}$ and $\dd r=\dd r_2 \dd r_1$. Furthermore, we observe that if $g(t_1,t_2)=\square_{0,t}f$, then $g$ is zero on the boundary, since 
	$$ g(t_1,0)=\square_{0,(t_1,0)}f=f(t_1,0)-f(0,0)-f(t_1,0)+f(0,0)=0,
	$$
	and similarly we can check that $g(0,t_2)=0$. Furthermore, 
	we have 
	\begin{equation}\label{eq:increment f and g}
		\square_{s,t} g=\square_{s,t} f. 
	\end{equation}
	Note that the two functions can still be different on the boundary, as this is not captured by the rectangular increment.

	We will work with a 2D H\"older space, capturing the necessary regularity of fields of interest in each of their variables. To this end, we also introduce two concepts which will be used to measure the regularity: Namely, for $s<t\in [0,T]^2$ and $\alpha=(\alpha_1,\alpha_2)\in (0,1)^2$ we define 
	\begin{equation}\label{eq:dist multi}
		m(t-s)^\alpha := |t_1-s_1|^{\alpha_1}|t_2-s_2|^{\alpha_2}. 
	\end{equation}
	With a slight abuse of notation we also define 
	\begin{equation}\label{eq:dist}
		|t-s|^\alpha=|t_1-s_1|^{\alpha_1}+|t_2-s_2|^{\alpha_2}. 
	\end{equation}
	
	\begin{defn}\label{eq: 2D Holder space}
		Let $E$ be a Banach space and $f:[0,T]^2\rightarrow E$ be such that, for some $\alpha = (\alpha_1,\alpha_2)\in (0,1)^2$,
		\begin{equation*}
			[f]_\alpha:=[f]_{(1,0),\alpha}+[f]_{(0,1),\alpha}+[f]_{(1,1),\alpha} <\infty, 
		\end{equation*}
		where we define the semi-norms 
		\begin{equation}\label{eq:holder semi 2d}
			\begin{aligned}
				\,[f]_{(1,0),\alpha}&:=\sup_{s\neq t\in [0,T]^2} \frac{|f(t_1,s_2)-f(s_1,s_2)|_E}{|t_1-s_1|^{\alpha_1}},
				\\
				[f]_{(0,1),\alpha}&:=\sup_{s\neq t\in [0,T]^2} \frac{|f(s_1,t_2)-f(s_1,s_2)|_E}{|t_2-s_2|^{\alpha_2}},
				\\
				[f]_{(1,1),\alpha}&:=\sup_{s\neq t\in [0,T]^2} \frac{|\square_{s,t} f|_E}{m(t-s)^\alpha}.
			\end{aligned}
		\end{equation}
		We then say that $f$ is $\alpha$-H\"older continuous on the rectangle $[0,T]^2$, and we write $f\in C^\alpha_t E$.
		Under the mapping $f\mapsto |f(0,0)|+[f]_\alpha =: \| f\|_{C^\alpha_t E}$, the space $C^\alpha_t E$ is a Banach space. Whenever $E=\RR^d$ we write $C^\alpha_t$ or sometimes $C^\alpha([0,T]^2;\RR^d)$ instead of $C^\alpha_t \RR^d$. Moreover, if we need to keep track of the interval over which compute the above quantities then we will highlight the interval explicitly in subscript, for e.g. $[f]_{\alpha,[0,T]}$.
	\end{defn}
	\begin{rem}\label{rem: decomp of 2d functions}
		Note that any function $f:[0,T]^2\rightarrow E$ can be decomposed into two functions $f=z+y$ where $y$ is zero on the boundary $\partial [0,T]^2:=\{0\}\times [0,T]\cup [0,T]\times \{0\}$ and for any $(s,t)\in [0,T]^2$ $\square_{s,t}z=0$. Indeed, by simple addition and subtraction, we see that 
		\begin{equation*}
			f(t_1,t_2)= \square_{0,t}f +f(t_1,0)+f(0,t_2)-f(0,0), 
		\end{equation*}
		thus by defining 
		\begin{equation*}
			z(t_1,t_2):=f(t_1,0)+f(0,t_2)-f(0,0)\quad \mathrm{and} \quad y(t_1,t_2)=\square_{0,t} f,
		\end{equation*}
		we see that $z$ and $y$ satisfy the claimed properties. Furthermore, considering the 2D-H\"older semi-norm  of $f$ over $[0,T]^2$, we see that 
		\begin{equation}\label{eq:decomp of holder norm}
			[f]_\alpha\sim_T[z]_{(1,0),\alpha}+[z]_{(0,1),\alpha}+[y]_{(1,1),\alpha}. 
		\end{equation}
		This decomposition and relation will play a central role in subsequent sections. 
	\end{rem}

	Let us also recall the definition of Besov
	spaces which will be of use towards the formulation of our regularization by noise results. For a more extensive introduction we refer to \cite{BahCheDan}. We will denote by $\mathscr{S}$ (respectively
	$\mathscr{S}^{\prime}$) the space of Schwartz functions on $\mathbb R^d$ (respectively its dual, the space of tempered distributions). For $f\in\mathscr{S}^{\prime}$
	we denote the Fourier transform by $\hat{f}=\mathscr{F}\left(f\right) = \int_{\mathbb R^d}e^{-i x \cdot} f(x)  dx$, where the integral notation is formal, with inverse $\mathscr{F}^{-1} f = (2\pi)^{-d}\int_{\mathbb R^d} e^{i z \cdot} \hat f(z)dz$.
	\begin{defn}
		\label{def:Dyadic partition of unity}Let $\chi,\rho\in C^{\infty}(\mathbb{R}^{d})$
		be two radial functions such that $\chi$ is supported on a ball $\mathcal{B}=\left\{ |x|\leq c\right\} $
		and $\rho$ is supported on an annulus $\mathcal{A}=\left\{ a\leq|x|\leq b\right\} $
		for $a,b,c>0$, such that 
		\begin{align*}
			\chi+\sum_{j\geq0}\rho\left(2^{-j}\cdot\right) & \equiv1,\\
			{\rm supp}\left(\chi\right)\cap{\rm supp}\left(\rho\left(2^{-j}\cdot\right)\right) & =\emptyset,\,\,\,\forall j\ge1,\\
			{\rm supp}\left(\rho\left(2^{-j}\cdot\right)\right)\cap{\rm supp}\left(\rho\left(2^{-i}\cdot\right)\right) & =\emptyset,\,\,\,\forall|i-j|\geq1.
		\end{align*}
		Then we call the pair $\left(\chi,\rho\right)$ a \emph{dyadic partition
			of unity}. Furthermore, we write $\rho_{j} :=\rho(2^{-j} \cdot)$
		for $j\geq 0$ and $\rho_{-1}=\chi$, as well as $K_{j}=\mathscr{F}^{-1}\rho_{j}$.
	\end{defn}
	
	The existence of a partition of unity is shown for example in \cite[Proposition 2.10]{BahCheDan}. We fix a partition of unity $(\chi,\rho)$ for the rest of the paper.
	
	\begin{defn}
		\label{def:Paley littlewood block}For $f\in\mathcal{\mathscr{S}}^{\prime}$
		we define its Littlewood-Paley blocks by
		\[
		\Delta_{j}f :=\mathscr{F}^{-1}(\rho_{j}\hat{f}) = K_j \ast f.
		\]
		It follows that $f=\sum_{j\geq-1}\Delta_{j}f$ with convergence in $\mathscr S'$.
	\end{defn}

	\begin{defn}
		\label{def:Besov space}For any $\alpha\in\mathbb{R}$ and $p,q\in\left[1,\infty\right]$,
		the \emph{Besov space} $B_{p,q}^{\alpha}(\RR^d)$ is
		\[
		B_{p,q}^{\alpha}(\RR^d):=\left\{ f\in\mathscr{S}^{\prime}\left|\|f\|_{B_{p,q}^{\alpha}(\RR^d)} :=\left(\sum_{j\geq-1}\left(2^{j\alpha}\| \Delta_{j}f \|_{L^{p}}\right)^{q}\right)^{\frac{1}{q}}<\infty\right.\right\},
		\]
		with the usual interpretation as $\ell^\infty$ norm if $q = \infty$. 
	\end{defn}

	At various places we will write $B_{p,q}^{\alpha}$ instead $B_{p,q}^{\alpha}(\RR^d)$ to simplify notation. Furthermore we will work with the classical space of  global H\"older continuous functions over the whole space $\RR^d$. We denote the space of globally bounded H\"older continuous functions from $\RR^d$ by $C^\alpha_x:=C^\alpha_b(\RR^d)$. We extend these spaces to the differentiable functions with H\"older continuous derivatives in the canonical way. Note that $B^\alpha_{\infty,\infty}(\RR^d) \simeq C^\alpha_b(\RR^d)$ whenever $\alpha$ is a positive non-integer number.

	\section{2D Non-linear Young integrals and equations}\label{sec:2DNLY}
	In this section we will provide a framework for a 2D non-linear Young theory, starting with the formulation of the 2D Sewing Lemma from \cite{Harang2020} and followed by non-linear Young integrals and equations. 
	
	\subsection{The 2D Sewing Lemma}
	
	In order to formulate the 2D Sewing Lemma, we will introduce  an extension of the familiar $\delta$ operator known from the theory of rough paths \cite{FriHai}. We  define this as follows: for a function $f:[0,T]^4\rightarrow \RR^d$, and $s<u<t\in [0,T]^2$ define 
	\begin{equation*}
		\begin{aligned}
			\delta^1_u f_{s,t}=\delta^1 f_{s,u,t} = f_{s,t}-f_{s,(u_1,t_2)}-f_{(u_1,s_2),t},
			\\
			\delta^2_u f_{s,t}=\delta^2 f_{s,u,t}=f_{s,t}-f_{s,(t_1,u_2)}-f_{(s_1,u_2),t}.
		\end{aligned}
	\end{equation*}
	Thus, for $i=1,2,$ $\delta^i:[0,T]\rightarrow L(C([0,T]^4); C([0,T]^5))$ (where $L(X;Y)$ is the space of linear operators from $X$ to $Y$). 
	Furthermore, we will invoke the composition of $\delta^1\circ \delta^2$ defined in the canonical way; i.e. $\delta^1\circ \delta^2 f=\delta^1(\delta^2 f)=\delta^2(\delta^1 f)$, and we note that $\delta^1\circ \delta^2:[0,T]^2\rightarrow L(C([0,T]^4); C([0,T]^6))$.  If we want to specify the variable in the $\delta$ operator then we will write $\delta_{u_i}^i,i =1,2$.

	\begin{defn}
		Consider $\alpha\in (0,1)^2$ and $\beta\in (1,\infty)^2$. We denote by $\cC^{\alpha,\beta}_2$ the space of all functions  $\Xi:[0,T]^4\rightarrow \RR^d$ such that $\Xi_{s,t}=0$ if $s_1=t_1$ or $s_2 = t_2$ and 
		\begin{equation*}
			[\Xi]_{\alpha,\beta}:=[\Xi]_{\alpha}+[\Xi]_{(0,1),\alpha,\beta}+[\Xi]_{(1,0),\alpha,\beta}+[\Xi]_{(1,1),\alpha,\beta}<\infty, 
		\end{equation*}
		where the semi-norm $[\Xi]_{\alpha}$, and the remaining terms above are given by 
		\begin{equation}\label{eq:holder norms}
			\begin{aligned}
				\,[\Xi]_{\alpha}&:=\sup_{s\neq t\in [0,T]^2} \frac{|\Xi_{s,t}|}{m(t-s)^\alpha},
				\\
				\,[\Xi]_{(1,0),\alpha,\beta}&:=\sup_{s<u< t\in [0,T]^2} \frac{|\delta^1 \Xi_{s,u,t}|}{|t_1-s_1|^{\beta_1}|t_2-s_2|^{\alpha_2}},
				\\
				[\Xi]_{(0,1),\alpha,\beta}&:=\sup_{s<u< t\in [0,T]^2} \frac{|\delta^2 \Xi_{s,u,t}|}{|t_1-s_1|^{\alpha_1}|t_2-s_2|^{\beta_2}},
				\\
				[\Xi]_{(1,1),\alpha,\beta}&:=\sup_{s<u< t\in [0,T]^2} \frac{|\delta^1\circ \delta^2 \Xi_{s,u,t}|}{m(t-s)^\beta}, 
			\end{aligned}
		\end{equation}
		where we recall that $m(t-s)^\beta$ is defined as in \eqref{eq:dist multi}. For later notational convenience we also define $[\delta \Xi]_{\alpha,\beta}=[\Xi]_{(0,1),\alpha,\beta}+[\Xi]_{(1,0),\alpha,\beta}+[\Xi]_{(1,1),\alpha,\beta}$. 
	\end{defn}
	%\begin{rem}
	%Note that the notation for the first semi-norm for the 4-variable function in \eqref{eq:holder norms} coincides with the definition of the H\"older semi-norms in \eqref{eq:holder semi 2d}. Throughout the text it should however be clear from context, i.e. from the type of function which is evaluated in the semi-norm, which type of semi-norm that is referred to.  
	%\end{rem}
	
	When working with the two dimensional sewing lemma it is convenient to simplify notation for two dimensional partitions. We therefore provide the following definition. 
	\begin{defn}\label{def: partition}
		We will say that $\cP$ is a partition of the rectangle $[s,t]\subset [0,T]^2$ if 
		\begin{equation*}
			\cP=\cP^1\times \cP^2, 
		\end{equation*}
		where $\cP^1$ is a standard partition of $[s_1,t_1]$ and $\cP^2$ is a standard partition of $[s_2,t_2]$.
	\end{defn}
	
	We are now ready to state a two dimensional version of the sewing lemma. A version of this lemma was first introduced in \cite{towghi} using variation norms, and extended to the hyper-cubes in arbitrary dimension in the setting of H\"older type norms in \cite{Harang2020}. Here
	we follow the last reference. 
	
	\begin{lem}[\cite{Harang2020}, Lemma 14]\label{lem:2d sewing}
		For $\alpha\in (0,1)^2$ and $\beta\in (1,\infty)^2$, let $\Xi\in \cC^{\alpha,\beta}_2$. Let $\cP:=\cP[s,t]$ denote a partition of $[s,t]\subset [0,T]^2$ in the sense of Definition \ref{def: partition}.   
		There exists a unique continuous linear functional $\cI:\cC^{\alpha,\beta}_2 \rightarrow C^\alpha_t$  given by
		\begin{equation*}
			\cI(\Xi)_{[s,t]} :=\lim_{|\cP|\rightarrow 0} \sum_{[u,v]\in \cP} \Xi_{u,v},  
		\end{equation*}
		where the limit is taken over any sequence of partitions with $|\cP|\rightarrow 0$.  We note that it is under the restriction $t\mapsto \cI(\Xi)_t:=\cI(\Xi)_{[0,t]}$ that we have $\cI(\Xi)\in C^\alpha_t$,  and we have that  $\square_{s,t}\cI(\Xi)_\cdot=\cI(\Xi)_{[s,t]}$. 
		Furthermore, there exists a constant $C=C(\alpha,\beta,T)>0$ such that the function $\cI(\Xi)$ satisfies the following inequality \begin{equation}\label{eq:sew lem ineq}
			|\cI(\Xi)_{[s,t]}-\Xi_{s,t}|\leq C m(t-s)^\alpha |t-s|^{\beta-\alpha} [\delta \Xi]_{\alpha,\beta}, 
		\end{equation}
		where $m(t-s)^\alpha$ and $|t-s|^{\beta-\alpha}$ are defined in \eqref{eq:dist multi} and \eqref{eq:dist}.
	\end{lem}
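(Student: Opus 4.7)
The plan is to adapt the standard one-dimensional sewing construction to the rectangular setting using the refinement operators $\delta^1$, $\delta^2$ and their composition. For a partition $\cP$ of $[s,t]\subset [0,T]^2$ in the sense of Definition \ref{def: partition}, set $\Xi_\cP := \sum_{[u,v]\in\cP}\Xi_{u,v}$; the first step is to show that the net $\{\Xi_\cP\}$ is Cauchy as $|\cP|\to 0$, yielding the limit $\cI(\Xi)_{[s,t]}$.

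The key mechanism is to compare a partition with a refinement obtained by inserting a single point in one of the coordinate partitions. A direct algebraic computation shows that inserting $\bar r_1\in(r_1^i,r_1^{i+1})$ into $\cP^1$ changes $\Xi_\cP$ by a column of $\delta^1$ increments of the form $\delta^1\Xi_{(r_1^i,r_2^j),\bar r_1,(r_1^{i+1},r_2^{j+1})}$, which by the semi-norm $[\Xi]_{(1,0),\alpha,\beta}$ is bounded by a factor $|r_1^{i+1}-r_1^i|^{\beta_1}\sum_j|r_2^{j+1}-r_2^j|^{\alpha_2}$; an analogous bound with $[\Xi]_{(0,1),\alpha,\beta}$ controls second-coordinate insertions. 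Passing from any partition $\cP$ to a refinement $\cP'$ through a chain of single-point insertions organized dyadically (repeatedly bisecting the longest interval in each coordinate) then produces a summable geometric series thanks to the condition $\beta_k>1$, which establishes the Cauchy property.

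The composed semi-norm $[\Xi]_{(1,1),\alpha,\beta}$ enters the picture when one insertion is performed in the first coordinate on a partition that has already been refined in the second coordinate (or vice versa): it controls the cross term arising from $\delta^1\circ\delta^2$ and is what guarantees the clean product structure in \eqref{eq:sew lem ineq}. To prove that estimate, I would compare $\cI(\Xi)_{[s,t]}$ with the trivial one-rectangle sum $\Xi_{s,t}$ using the same dyadic procedure, and telescope the per-step errors into the product $C\, m(t-s)^\alpha |t-s|^{\beta-\alpha}[\delta\Xi]_{\alpha,\beta}$. Rectangle additivity $\square_{s,t}\cI(\Xi)_\cdot=\cI(\Xi)_{[s,t]}$ is immediate from the definition of $\Xi_\cP$, and combined with \eqref{eq:sew lem ineq} this delivers $\cI(\Xi)\in C^\alpha_t$ via the semi-norm $[\Xi]_\alpha$. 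Uniqueness follows from a standard argument: any other candidate linear functional satisfying \eqref{eq:sew lem ineq} is rectangle-additive with the same infinitesimal behaviour on shrinking rectangles and must therefore agree with the limit of Riemann sums.

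The main obstacle will be the combinatorial bookkeeping of the three families of single-point insertion errors and the ordering of dyadic refinements in two coordinates so that the geometric factors $|t_k-s_k|^{\beta_k-\alpha_k}$ assemble cleanly into the product bound without any residual cross-term mismatch. Conceptually, however, the three semi-norms in the definition of $\cC^{\alpha,\beta}_2$ correspond exactly to the three independent refinement moves available on a rectangular partition (first coordinate, second coordinate, and mixed), and each contributes its own telescoping series convergent under $\beta\in(1,\infty)^2$; the proof of \cite{Harang2020} makes this accounting precise and can be imported directly.
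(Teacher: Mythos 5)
First, note that the paper itself does not prove this lemma: it is quoted from \cite[Lemma 14]{Harang2020} and the text explicitly defers to that reference, so your proposal can only be measured against the argument of \cite{Harang2020} (a sketch of which survives, commented out, in the source). Your overall architecture — dyadic refinement, with the three semi-norms $[\Xi]_{(1,0),\alpha,\beta}$, $[\Xi]_{(0,1),\alpha,\beta}$, $[\Xi]_{(1,1),\alpha,\beta}$ matching the three refinement moves — is the right one and is the one used there. However, there is a genuine gap in the key estimate. When you insert a point $\bar r_1\in(r_1^i,r_1^{i+1})$ into $\cP^1$ while $\cP^2$ has intervals $[r_2^j,r_2^{j+1}]$, the change in the Riemann sum is the column $\sum_j \delta^1\Xi_{(r_1^i,r_2^j),\bar r_1,(r_1^{i+1},r_2^{j+1})}$, and bounding it term by term with $[\Xi]_{(1,0),\alpha,\beta}$ gives, as you write, the factor $|r_1^{i+1}-r_1^i|^{\beta_1}\sum_j|r_2^{j+1}-r_2^j|^{\alpha_2}$. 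Since $\alpha_2<1$, that sum is \emph{not} uniformly bounded: for a uniform partition of $[s_2,t_2]$ into $N$ pieces it equals $N^{1-\alpha_2}|t_2-s_2|^{\alpha_2}\to\infty$ as $N\to\infty$. So the geometric series you invoke does not close whenever a first-coordinate insertion is performed on a partition that is already fine in the second coordinate (and symmetrically), which is unavoidable in any scheme that refines both coordinates. No ordering of the dyadic moves makes this "bookkeeping" issue disappear; it is the central analytic difficulty of the two-parameter lemma, not a combinatorial one.

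The missing idea is a telescoping of the column against the one-dimensional sewing in the orthogonal coordinate. Freezing $(u_1,v_1)$, the germ $(u_2,v_2)\mapsto\Xi_{(u_1,u_2),(v_1,v_2)}$ admits by the 1D sewing lemma a decomposition $I^{2,u_1,v_1}_{u_2,v_2}=\Xi_{(u_1,u_2),(v_1,v_2)}+R^{2,u_1,v_1}_{u_2,v_2}$ with $\delta^2 I^2=0$. One then writes the column of $\delta^1$-increments as a single increment $\delta^1 I^2$ over the full interval $[s_2,t_2]$ (which telescopes because $\delta^2 I^2=0$ and is controlled by $[\Xi]_{(1,0),\alpha,\beta}$ with the fixed factor $|t_2-s_2|^{\alpha_2}$) plus a sum of $\delta^1 R^2$ terms; the point is that $|\delta^1 R^2_{u,r,v}|\lesssim [\Xi]_{(1,1),\alpha,\beta}\,|v_1-u_1|^{\beta_1}|v_2-u_2|^{\beta_2}$, with \emph{both} exponents exceeding one, so the double sum over the fine partition converges. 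This is exactly where $[\Xi]_{(1,1),\alpha,\beta}$ enters — not merely to "control a cross term", but to upgrade the divergent exponent $\alpha_2$ to $\beta_2$ on the remainder. With that lemma on $\delta^1 R^2$ in hand, the rest of your outline (Cauchy property, the bound \eqref{eq:sew lem ineq} by telescoping against $\Xi_{s,t}$, additivity, and uniqueness) goes through as you describe.
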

	
	For the sake of brevity, we refer the reader to \cite[Lem. 14]{Harang2020} for a full proof of this lemma.

	\subsection{2D non-linear Young integral}
	With the aim of constructing a 2D analogue of the non-linear Young integral, we will need to control the rectangular increments of differentiable non-linear functions. We therefore provide the following elementary lemma, which will be frequently used in the sequel. 
	
	\begin{lem}\label{Lem: reg of f}
		Let $f\in C^{1+\eta}(\RR^d)$ for some $\eta\in (0,1)$. Then the following bound holds: For all $x,y,z,w\in \RR^d$ 
		\begin{equation}\label{eq:control of rectangular increment}
			|f(x)-f(y)-f(z)+f(w)|\leq \|f\|_{C^{1+\eta}}(|x-y-z+w|+|x-y|(|x-z|+|y-w|)^\eta).
		\end{equation}
	\end{lem}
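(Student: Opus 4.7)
My plan is to express the rectangular increment of $f$ as a combination of integrals of $\nabla f$ via the fundamental theorem of calculus and then exploit the $\eta$-Hölder regularity of $\nabla f$.

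First I would write the two pairs of differences using one-dimensional integration along the segments $[y,x]$ and $[w,z]$:
\begin{align*}
f(x)-f(y) &= \int_0^1 \nabla f(y+t(x-y))\cdot(x-y)\,dt,\\
f(z)-f(w) &= \int_0^1 \nabla f(w+t(z-w))\cdot(z-w)\,dt.
\end{align*}
Subtracting, the key algebraic manipulation is to add and subtract $\nabla f(w+t(z-w))\cdot(x-y)$ inside the integrand so that one factor captures the $\eta$-Hölder oscillation of $\nabla f$ while the other captures the rectangular increment of the arguments:
\begin{equation*}
f(x)-f(y)-f(z)+f(w) = \mathrm{I}+\mathrm{II},
\end{equation*}
where
\begin{equation*}
\mathrm{I} = \int_0^1 \bigl[\nabla f(y+t(x-y))-\nabla f(w+t(z-w))\bigr]\cdot(x-y)\,dt
\end{equation*}
and
\begin{equation*}
\mathrm{II} = \int_0^1 \nabla f(w+t(z-w))\cdot(x-y-z+w)\,dt.
\end{equation*}

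The second term $\mathrm{II}$ is estimated directly by $\|\nabla f\|_\infty |x-y-z+w|\leq \|f\|_{C^{1+\eta}}|x-y-z+w|$, which accounts for the first contribution on the right-hand side of \eqref{eq:control of rectangular increment}. For the term $\mathrm{I}$, I would use the $\eta$-Hölder continuity of $\nabla f$, together with the elementary bound $|(1-t)(y-w)+t(x-z)|\leq |y-w|+|x-z|$ valid for $t\in[0,1]$, to conclude
\begin{equation*}
|\mathrm{I}|\leq [\nabla f]_\eta\,|x-y|\,(|x-z|+|y-w|)^\eta \leq \|f\|_{C^{1+\eta}}\,|x-y|(|x-z|+|y-w|)^\eta.
\end{equation*}
Summing the two bounds yields \eqref{eq:control of rectangular increment}.

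I do not expect any serious obstacle; the only subtle point is choosing the decomposition so that one term is controlled purely by $|x-y-z+w|$ and the other factorizes as $|x-y|$ times a Hölder-type remainder. Splitting along the $x{-}y$ direction first (rather than, say, symmetrizing) is what produces precisely the form appearing on the right-hand side.
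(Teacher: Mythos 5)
Your proof is correct and follows essentially the same route as the paper: a first-order Taylor expansion of $f(x)-f(y)$ and $f(z)-f(w)$ along segments, followed by adding and subtracting a cross term so that one integral carries the $\eta$-H\"older oscillation of $\nabla f$ against the factor $|x-y|$ and the other is bounded by $\|\nabla f\|_\infty|x-y-z+w|$. In fact your displayed decomposition is the correctly written version of the paper's identity \eqref{eq:first order taylor}, whose second integrand should be evaluated at $\theta z+(1-\theta)w$ rather than $\theta x+(1-\theta)y$; this does not affect the final estimate.
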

	\begin{proof}
		From a first order Taylor expansion, it follows that 
		\begin{multline}\label{eq:first order taylor}
			f(x)-f(y)-f(z)+f(w)=\int_0^1\nabla f(\theta x+(1-\theta)y)-\nabla f(\theta z+(1-\theta)w)\dd \theta (x-y)
			\\
			+\int_0^1\nabla f(\theta x+(1-\theta)y)\dd \theta (x-y-z+w). 
		\end{multline}
		Using that $\nabla f\in C^\eta$, it follows that 
		\begin{equation*}
			|f(x)-f(y)-f(z)+f(w)|\lesssim  \|f\|_{C^{1+\eta}}(|x-y-z+w|+\int_0^1|\theta (x-z)+(1-\theta)(y-w)|^\eta \dd \theta\,|x-y|).
		\end{equation*}
	\end{proof}
	
	With the above lemma at hand, we are now ready to prove the existence of the 2D Non-Linear Young integral (NLY), and its properties. 
	
	\begin{prop}[2D-Non-linear Young integral]\label{prop. 2d NLY integral}
		Let $A:[0,T]^2\times \RR^d \rightarrow \RR^d$ be such that for some $\gamma\in (\frac{1}{2},1]^2$ and some $\eta\in (0,1]$, $A\in C^\gamma_tC^{1+\eta}_{x}$.  
		Consider a path $y\in C^\alpha_t([0,T]^2;\RR^d)$ with $\alpha\in(0,1)^2$ such that for $i=1,2$ we have $\alpha_i  \eta +\gamma_i>1$. 
		%For notational ease let $A_{u,v}(x):=\square_{u,v}A(x)$. 
		Then the 2D non-linear Young integral of $y$ with respect to $A$ is defined by 
		\begin{equation}\label{eq:2dnly}
			\int_0^t A(\dd s,y_s) :=\lim_{|\cP|\rightarrow 0} \sum_{[u,v]\in \cP} \square_{u,v}A(y_u),
		\end{equation}
		where $\cP$ is a partition of $[0,t]\subset [0,T]^2$, as given in Definition \ref{def: partition}. 
		Furthermore, there exists a constant $C>0$ such that  
		\begin{equation}\label{eq:bound NLY integral}
			|\int_s^t A(\dd s,y_s)-\square_{s,t}A(y_s)|\leq C\|A\|_{C^{\gamma}_tC^{1+\eta}_x} ([y]_{(1,1),\alpha}\vee ([y]_{(1,0),\alpha}+[y]_{(0,1),\alpha})^{1+\eta}) m(t-s)^\gamma |t-s|^{\eta\alpha},
		\end{equation}
		and it follows that 
		\begin{equation*}
			[0,T]^2\ni t\mapsto \int_0^t A(\dd s,y_s)\in C^\gamma_t.
		\end{equation*}
	\end{prop}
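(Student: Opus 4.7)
The plan is to apply the 2D Sewing Lemma (Lemma \ref{lem:2d sewing}) to the germ
\begin{equation*}
\Xi_{s,t} := \square_{s,t} A(\cdot, y_s),
\end{equation*}
that is, the rectangular increment of $A$ in its time argument with the spatial slot frozen at $y_s$. I will work with sewing parameters $\alpha' := \gamma$ and $\beta'_i := \gamma_i + \alpha_i \eta$, both components of the latter exceeding $1$ by the standing hypothesis $\gamma_i + \alpha_i \eta > 1$. Once $\Xi \in \cC^{\gamma, \beta'}_2$ has been verified, the sewing estimate \eqref{eq:sew lem ineq} specialized to this choice will produce exactly the right-hand side of \eqref{eq:bound NLY integral}, since $|t-s|^{\beta'-\gamma} = |t_1-s_1|^{\alpha_1\eta} + |t_2-s_2|^{\alpha_2\eta} = |t-s|^{\eta\alpha}$. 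The Riemann-sum representation \eqref{eq:2dnly} then follows directly from the definition of $\cI(\Xi)$, and membership in $C^\gamma_t$ is automatic from the Sewing Lemma's output.

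Verification of $\Xi \in \cC^{\gamma, \beta'}_2$ proceeds term by term. The basic estimate $[\Xi]_\gamma \leq \|A\|_{C^\gamma_t C^0_x}$ is immediate. For the one-sided $\delta$-bounds, a direct algebraic calculation using additivity of rectangular increments along the splitting axis yields
\begin{equation*}
\delta^1_u \Xi_{s,t} = \square_{(u_1, s_2), t}\bigl[A(\cdot, y_s) - A(\cdot, y_{(u_1, s_2)})\bigr],
\end{equation*}
and symmetrically for $\delta^2_u \Xi_{s,t}$. Combining a first-order Taylor expansion of $x \mapsto A(\cdot, x)$ (allowed by $A \in C^\gamma_t C^1_x$) with the directional Hölder estimate $|y_s - y_{(u_1, s_2)}| \leq [y]_{(1,0),\alpha}(u_1 - s_1)^{\alpha_1}$ bounds $[\Xi]_{(1,0), \gamma, \beta'}$ and $[\Xi]_{(0,1), \gamma, \beta'}$ by a multiple of $\|A\|_{C^\gamma_t C^1_x}\bigl([y]_{(1,0),\alpha} + [y]_{(0,1),\alpha}\bigr)$, using $\alpha_i + \gamma_i \geq \beta'_i$ since $\eta \leq 1$.

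The main technical step is the mixed $(1,1)$ bound. An analogous increment-splitting computation gives
\begin{equation*}
\delta^1 \delta^2 \Xi_{s,u,t} = \square_{(u_1, u_2), t}\bigl[A(\cdot, y_s) - A(\cdot, y_{(s_1, u_2)}) - A(\cdot, y_{(u_1, s_2)}) + A(\cdot, y_{(u_1, u_2)})\bigr].
\end{equation*}
The key idea is to view $h(x) := \square_{(u_1, u_2), t} A(\cdot, x)$ as a function of $x \in \RR^d$: by $A \in C^\gamma_t C^{1+\eta}_x$ one has $\|h\|_{C^{1+\eta}(\RR^d)} \lesssim \|A\|_{C^\gamma_t C^{1+\eta}_x} \, m\bigl((t_1 - u_1, t_2 - u_2)\bigr)^\gamma$. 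Applying Lemma \ref{Lem: reg of f} to $h$ at the four corner values $y_s, y_{(s_1, u_2)}, y_{(u_1, s_2)}, y_{(u_1, u_2)}$ splits the bound into a linear part controlled by $|\square_{s,u} y| \leq [y]_{(1,1),\alpha}\, m(u-s)^\alpha$ and a nonlinear cross term bounded by $[y]_{(0,1),\alpha}[y]_{(1,0),\alpha}^\eta (u_2-s_2)^{\alpha_2}(u_1-s_1)^{\alpha_1 \eta}$.

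The nonlinear cross term is precisely what forces the exponent $\beta'_i = \gamma_i + \alpha_i \eta$ (as opposed to the more generous $\gamma_i + \alpha_i$) and explains why the $C^{1+\eta}_x$ hypothesis, rather than a mere $C^1_x$ assumption, is indispensable. It also accounts for the exponent $1+\eta$ in \eqref{eq:bound NLY integral}, upon bounding the product $[y]_{(0,1),\alpha}[y]_{(1,0),\alpha}^\eta$ by $\bigl([y]_{(1,0),\alpha} + [y]_{(0,1),\alpha}\bigr)^{1+\eta}$ and taking the maximum with the linear contribution $[y]_{(1,1),\alpha}$. The principal obstacle in the whole argument lies in this combinatorial identification of $\delta^1\delta^2 \Xi$ together with the careful bookkeeping of Hölder exponents in both coordinates simultaneously, ensuring each is strictly greater than $1$ so that the 2D Sewing Lemma can be invoked.
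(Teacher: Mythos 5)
Your proposal is correct and follows essentially the same route as the paper: apply the 2D Sewing Lemma to the germ $\Xi_{s,t}=\square_{s,t}A(y_s)$ with sewing exponents $\gamma$ and $\beta=\gamma+\alpha\eta$, compute $\delta^1\Xi$, $\delta^2\Xi$ and $\delta^1\circ\delta^2\Xi$ exactly as in \eqref{eq:delta 1 on A}--\eqref{eq:delta12 on A}, and control the mixed increment via Lemma \ref{Lem: reg of f} applied to $x\mapsto\square_{u,t}A(\cdot,x)$, which produces the linear term in $[y]_{(1,1),\alpha}$ and the nonlinear cross term responsible for the exponent $1+\eta$. The only differences are cosmetic (a symmetric choice of pairing in the cross term, swapping the roles of the two coordinates).
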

	\begin{proof}
		Towards the construction of the integral in \eqref{eq:2dnly}, we will apply the 2D sewing lemma to the integrand $\square_{u,v}A(y_u)$. Thus, we need to check that $[0,T]^4\ni(s,t)\mapsto  \square_{s,t}A(y_s)$ belongs to  $\cC^{\alpha,\beta}_2$ for the given $\alpha$ and some well chosen $\beta\in (1,\infty)^2$. Let $u=(u_1,u_2)$ such that $s<u<t$. 
		It is readily checked that 
		\begin{equation}\label{eq:delta 1 on A}
			\delta^1_{u_1}\square_{s,t}A(y_s)=\square_{(u_1, s_2), t}A(y_s)-\square_{(u_1, s_2), t}A(y_{(u_1, s_2)}),
		\end{equation}
		and similarly for $\delta^2_{u_2} \square_{s,t}A(y_s)$, and we have 
		\begin{equation}\label{eq:delta12 on A}
			\delta^1_{u_1}\circ \delta^2_{u_2}\square_{s,t}A(y_s) = -\left(\square_{u,t}A(y_u)-\square_{u,t}A(y_{u_1,s_2})-\square_{u,t}A(y_{s_1,u_2})+\square_{u,t}A(y_s)\right).
		\end{equation}
		We first prove the necessary regularity of the increment in \eqref{eq:delta 1 on A}, and a similar estimate for $\delta^2$ follows directly. 
		Using that $ A\in C_t^\gamma C^{1+\eta}_{x}$ we have that \begin{equation*}
			|\delta^1_{u_1}\square_{s,t}A(y_s)|\leq \|A_{(u_1,s_2),t}\|_{C^1_x}[y]_{(1,0),\alpha}|u_1-s_1|^{\alpha_1}. 
		\end{equation*}
		Invoking the assumption that $t\mapsto A(t,\cdot)\in C^\gamma_t$ we get that  
		\begin{equation}\label{eq:est on delta 1}
			|\delta^1_{u_1}\square_{s,t}A(y_s)|\leq \|A\|_{C^\gamma_tC^1_x}[y]_{(1,0),\alpha}|t_1-s_1|^{\alpha_1+\gamma_1}|t_2-s_2|^{\gamma_2}, 
		\end{equation}
		where we have used that $|t_1-u_1|^{\gamma_1}|u_1-s_1|^{\alpha_1}\leq |t_1-s_1|^{\alpha_1+\gamma_1}$ since $s<u<t$. 
		Now, we will consider the increment in \eqref{eq:delta12 on A}.
		By Lemma \ref{Lem: reg of f} and \eqref{eq:delta12 on A} it follows that 
		\begin{equation}\label{eq: first bound A delta 12}
			|\delta^1_{u_1}\circ \delta^2_{u_2}\square_{s,t}A(y_s)|\leq \|\square_{u,t} A \|_{C^{1+\eta}_x}\left(|\square_{s,u}y|+|y_{s_1,s_2}-y_{u_1,s_2}|(|y_{s_1,s_2}-y_{s_1,u_2}|+|y_{u_1,s_2}-y_{u_1,u_2}|)^\eta\right).
		\end{equation}
		Invoking the assumption of time regularity of $A$ and $y$, we see that 
		\begin{multline}\label{eq:est on delta 12}
			|\delta^1_{u_1}\circ \delta^2_{u_2}\square_{s,t}A(y_s)|\lesssim_T \|A\|_{C^\gamma_t C^{1+\eta}_x}([y]_{(1,1),\alpha}+[y]_{(1,0),\alpha}([y]_{(1,0),\alpha}+[y]_{(0,1),\alpha})^\eta) 
			\\
			m(t-s)^{\gamma}(m(t-s)^\alpha+|t_1-s_1|^{\alpha_1}|t_2-s_2|^{\eta \alpha_2}).
		\end{multline}
		Here we recall that $m(t-s)^\alpha=|t_1-s_1|^{\alpha_1}|t_2-s_2|^{\alpha_2}$.
		Note that 
		\begin{equation*}
			m(t-s)^{\gamma}(m(t-s)^\alpha+|t_1-s_1|^{\alpha_1}|t_2-s_2|^{\eta \alpha_2}) \lesssim_T m(t-s)^{\gamma+\alpha\eta}. 
		\end{equation*}
		Thus, from the estimates in \eqref{eq:est on delta 1} and \eqref{eq:est on delta 12} and using the assumption that $\alpha\eta+\gamma>1$, we define  $\beta=\alpha\eta+\gamma$ and it follows that $\Xi_{s,t}:=\square_{s,t}A(y_s)$ is contained in $\cC^{\alpha,\beta}_2$. 
		Using that $[y]_{(1,0),\alpha}\leq [y]_{(1,0),\alpha}+[y]_{(0,1),\alpha}$ and  that for positive numbers $a,b$ we have  $a+b\leq 2a\vee b$, we have that  
		\begin{equation*}
			([y]_{(1,1),\alpha}+[y]_{(1,0),\alpha}([y]_{(1,0),\alpha}+[y]_{(0,1),\alpha})^\eta) \leq 2 \left([y]_{(1,1),\alpha}\vee ([y]_{(1,0),\alpha}+[y]_{(0,1),\alpha})^{1+\eta}\right), 
		\end{equation*}
		we conclude by an application of Lemma  \ref{lem:2d sewing}, where the inequality in \eqref{eq:bound NLY integral} follows directly from \eqref{eq:sew lem ineq}. 
		
	\end{proof}
	
	\begin{rem}
		\label{difference 1d2d integral}
		Note that our construction of the 2D non-linear Young integral above requires one additional degree of spatial regularity compared with the 1d-nonlinear Young setting (refer e.g. to \cite[Theorem 2.7]{Galeati2020NonlinearYD}). This more restrictive condition appears, as we have to control already at this step four-point increments in the form of \eqref{eq:delta12 on A} due to the necessity of controlling $\delta^1\circ\delta^2 \square A$.
	\end{rem}
	The following Lemma establishes the consistency of the 2D non-linear Young integral constructed in the above Proposition \ref{prop. 2d NLY integral} with respect to classical Riemann integration in the setting of continuously differentiable $A$. 
	\begin{lem}
		Suppose the conditions of Proposition \ref{prop. 2d NLY integral} hold. Suppose moreover $\partial_{t_1}\partial_{t_2}A$ exists and is continuous. Then the  2D non-linear Young integral constructed in Proposition \ref{prop. 2d NLY integral} coincides with the corresponding Riemann integral, i.e. 
		\[
		\int_0^t A(\dd s, y_s)=\int_0^t \partial_{t_1}\partial_{t_2}A(s, y_s)\dd s
		\]
		\label{consistency 2d nonlinear young}
	\end{lem}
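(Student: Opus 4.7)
The plan is to start from the defining limit of the 2D non-linear Young integral in \eqref{eq:2dnly} and reinterpret each summand via the fundamental theorem of calculus, exploiting the additional smoothness assumption on $A$. First, for any rectangle $[u,v]$ in a partition $\cP$ of $[0,t]$, the existence and continuity of $\partial_{t_1}\partial_{t_2}A$ allow us to write, via iterated application of the fundamental theorem of calculus in each time variable,
\begin{equation*}
\square_{u,v}A(y_u) = \int_{u_1}^{v_1}\int_{u_2}^{v_2} \partial_{r_1}\partial_{r_2}A(r_1,r_2, y_u)\,dr_2\,dr_1.
\end{equation*}
Summing over $[u,v]\in \cP$ and introducing the piecewise-constant map $\pi_\cP\colon [0,t]\to[0,t]$ that sends $r$ to the lower-left corner of the unique rectangle of $\cP$ containing $r$, one obtains
\begin{equation*}
\sum_{[u,v]\in\cP}\square_{u,v}A(y_u) = \int_0^t \partial_{r_1}\partial_{r_2}A\bigl(r, y_{\pi_\cP(r)}\bigr)\,dr.
\end{equation*}

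Next, I would pass to the limit $|\cP|\to 0$ on the right-hand side. By construction $\pi_\cP(r)\to r$ uniformly in $r\in[0,t]$, and since $y\in C^\alpha_t$ is in particular uniformly continuous on the compact set $[0,T]^2$, this gives $y_{\pi_\cP(r)}\to y_r$ uniformly in $r$. As $y([0,T]^2)$ is a compact subset of $\RR^d$, the continuous function $\partial_{r_1}\partial_{r_2}A$ is uniformly continuous on the compact set $[0,T]^2\times K$ for any compact $K$ containing $y([0,T]^2)$. Consequently the integrand converges uniformly on $[0,t]$ to $\partial_{r_1}\partial_{r_2}A(r,y_r)$, and uniform convergence on the compact domain $[0,t]$ yields
\begin{equation*}
\sum_{[u,v]\in\cP}\square_{u,v}A(y_u) \xrightarrow[|\cP|\to 0]{} \int_0^t \partial_{r_1}\partial_{r_2}A(r,y_r)\,dr.
\end{equation*}

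On the other hand, by Proposition \ref{prop. 2d NLY integral} the very same Riemann-type sums converge to $\int_0^t A(\dd s,y_s)$. Uniqueness of limits then yields the claimed identity.

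I expect no serious obstacle in this argument: the only points requiring care are (i) ensuring $\partial_{r_1}\partial_{r_2}A$ is uniformly continuous on the relevant compact set (which follows automatically from continuity on $[0,T]^2\times\RR^d$ and compactness of $y([0,T]^2)$), and (ii) verifying that $y_{\pi_\cP(r)}\to y_r$ uniformly, which is immediate from the Hölder regularity of $y$. Everything else is standard bookkeeping, and the continuity of $y$ ensures that $r\mapsto \partial_{r_1}\partial_{r_2}A(r,y_r)$ is a continuous function on $[0,t]$, so that the Riemann integral on the right-hand side is well-defined.
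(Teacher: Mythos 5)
Your proposal is correct and follows essentially the same route as the paper: both write $\square_{u,v}A(y_u)$ as the Riemann integral of $\partial_{t_1}\partial_{t_2}A(\cdot,y_u)$ over the rectangle and then show the resulting Riemann-type sums converge to $\int_0^t\partial_{t_1}\partial_{t_2}A(r,y_r)\dd r$ by uniform continuity of the mixed derivative composed with the continuous path $y$. Your write-up is in fact slightly more careful about the uniform-continuity justification than the paper's, but the argument is the same.
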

	\begin{proof}
		Remark that as we have 
		\[
		\square_{s,t}A(y_s)=\int_s^t \partial_{t_1}\partial_{t_2}A(r, y_s)\dd r
		\]
		it suffice to show that for any sequence $\mathcal{P}^n$ of rectangular partitions of $[s_1, t_1]\times [s_2, t_2]$ such that $|\mathcal{P}^n|\to 0$, we have that
		\begin{equation*}
			D_n= \sum_{[u_1,v_1]\times [u_2,v_2]\in \cP^n} [\int_u^v \partial_{t_1}\partial_{t_2}A(r, y_u)- \partial_{t_1}\partial_{t_2}A(r, y_r)\dd r] \rightarrow 0 \quad as\quad  n\rightarrow \infty.. 
		\end{equation*}
		It is readily seen that
		\begin{equation*}
			|D_n|\leq \sup_{[u_1,v_1]\times [u_2,v_2]\in \cP^n} \sup_{r\in [u, v]}|\partial_{t_1}\partial_{t_2}A(r, y_s)- \partial_{t_1}\partial_{t_2}A(r, y_r)|m(t-s), 
		\end{equation*}
		Since the mesh size of $\cP^n$ goes to $0$ as $n\rightarrow \infty$ by assumption, it follows that 
		$$\sup_{[u_1,v_1]\times [u_2,v_2]\in \cP^n} \sup_{r\in [u, v]}|\partial_{t_1}\partial_{t_2}A(r, y_s)- \partial_{t_1}\partial_{t_2}A(r, y_r)|\rightarrow 0,$$
		as $n\rightarrow \infty$ since $\partial_{t_1}\partial_{t_2}A$ is assumed to be continuous. 
		We conclude that $D_n\rightarrow 0$ as $n\rightarrow \infty$ which concludes the proof. 
	\end{proof}

	Being a concept of integration constructed free of probability, the following stability estimates are not only useful for the subsequent proof of existence and uniqueness of non-linear Young equations, but also provides powerful estimates when applied in combination with stochastic processes, as will be evident in our application towards regularization by noise.

	\begin{prop}[Stability of integral]\label{prop:stability}
		For some $\gamma\in (\frac{1}{2},1]^2$ and $\eta\in (0,1]$, consider two functions  $A,\tilde{A}\in C^\gamma_t C^{2+\eta}_x$. Furthermore, suppose $y,\tilde{y}\in C^\alpha_t$ such that $\alpha  \eta +\gamma>1$ 
		Then the 2D non-linear Young integral satisfies the following inequality 
		\begin{equation}\label{eq:bound diff NLY integrals}
			|\int_s^t A(\dd r,y_r)-\int_s^t \tilde{A}(\dd r, \tilde{y}_r)| \leq\left( C_1\|A-\tilde{A}\|_{C^\gamma_tC^{2+\eta}_x}+C_2(|y_s-\tilde{y}_s|+[y-\tilde{y}]_{\alpha;[s,t]})\right)m(t-s)^\gamma,
		\end{equation}
		where the constants $C_1$ and $C_2$ are given by 
		\begin{equation}\label{eq:C const}
			\begin{aligned}
				C_1=&K ([y]_\alpha\vee [\tilde{y}]_\alpha),
				\\
				C_2=&K(\|A\|_{C^\gamma_tC^{2+\eta}_x}\vee\|\tilde{A}\|_{C^\gamma_tC^{2+\eta}_x})
				\\
				&([y]_{(1,1),\alpha}+[\tilde{y}]_{(1,1),\alpha})
				\vee ([y]_{(0,1),\alpha}+[y]_{(1,0),\alpha}+[\tilde{y}]_{(0,1),\alpha}+[\tilde{y}]_{(1,0),\alpha})^{1+\eta},
			\end{aligned}
		\end{equation}
		for some constant $K$ depending on $T,\alpha,\gamma,\eta$. 
	\end{prop}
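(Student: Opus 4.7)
The plan is to apply the 2D Sewing Lemma (Lemma \ref{lem:2d sewing}) to the germ
\[
\Xi_{s,t}:=\square_{s,t}A(y_s)-\square_{s,t}\tilde{A}(\tilde{y}_s),
\]
exploiting the linearity of $\cI$ so that the difference of the two non-linear Young integrals equals $\cI(\Xi)_{[s,t]}$. The sewing inequality \eqref{eq:sew lem ineq} then reduces the problem to bounding (i) the germ $|\Xi_{s,t}|$ directly, which will produce the leading $m(t-s)^\gamma$ term in \eqref{eq:bound diff NLY integrals}, and (ii) the quantities $\delta^1\Xi$, $\delta^2\Xi$ and $\delta^1\circ\delta^2\Xi$, which contribute higher-order corrections of the form $m(t-s)^{\beta}$ with $\beta=\gamma+\alpha\eta>(1,1)$, and are absorbed back into $m(t-s)^\gamma$ on the compact rectangle $[s,t]\subset[0,T]^2$.

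For the germ, I would use the telescoping decomposition
\[
\Xi_{s,t}=\square_{s,t}(A-\tilde{A})(y_s)+\bigl[\square_{s,t}\tilde{A}(y_s)-\square_{s,t}\tilde{A}(\tilde{y}_s)\bigr].
\]
The first summand is bounded by $\|A-\tilde{A}\|_{C^\gamma_tC^0_x}\,m(t-s)^\gamma$, and the second, via the mean value theorem in the spatial variable, by $\|\tilde{A}\|_{C^\gamma_tC^1_x}\,|y_s-\tilde{y}_s|\,m(t-s)^\gamma$. Both terms fit the structure of \eqref{eq:bound diff NLY integrals}. The analogous telescoping will be applied to the $\delta^1\Xi$ and $\delta^2\Xi$ pieces, which, arguing as in \eqref{eq:delta 1 on A}--\eqref{eq:est on delta 1}, reduce to first-order Taylor expansions of $A-\tilde{A}$ and of $\tilde{A}$ against the one-dimensional Hölder increments of $y$ and $y-\tilde{y}$; these generate the $[y-\tilde{y}]_{\alpha;[s,t]}$ contribution in $C_2$ as well as part of the $C_1\|A-\tilde{A}\|$ contribution.

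The main obstacle is controlling $\delta^1\circ\delta^2\Xi$, which, as in \eqref{eq:delta12 on A}, is a signed sum of four four-point increments of $\square\tilde{A}$ (or $\square(A-\tilde{A})$) evaluated at the four corners $y_s,y_{(u_1,s_2)},y_{(s_1,u_2)},y_u$, minus the same expression for $\tilde{y}$. The $A-\tilde{A}$ piece can be handled by Lemma \ref{Lem: reg of f} applied to $\square(A-\tilde{A})$, requiring only the $C^{1+\eta}_x$-type bound that already appeared in Proposition \ref{prop. 2d NLY integral}; this is where the top-order norm $\|A-\tilde{A}\|_{C^\gamma_tC^{2+\eta}_x}$ enters comfortably. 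The truly delicate piece is the ``difference of differences''
\[
\bigl[\square\tilde{A}(y_s)-\square\tilde{A}(y_{(u_1,s_2)})-\square\tilde{A}(y_{(s_1,u_2)})+\square\tilde{A}(y_u)\bigr]-\bigl[\text{same with }\tilde{y}\bigr],
\]
which I would estimate by performing a \emph{second-order} Taylor expansion of $F:=\square_{u,t}\tilde{A}$ around a common base point, so that both copies are compared on the same footing. This produces terms of the form $\nabla^2 F\cdot(\Delta y)\cdot(\Delta(y-\tilde{y}))$ and $\nabla^2 F\cdot(\Delta\tilde{y})\cdot(\Delta(y-\tilde{y}))$ together with a $\eta$-Hölder remainder in the second derivative, which explains why the hypothesis must be strengthened from $C^{1+\eta}_x$ in Proposition \ref{prop. 2d NLY integral} to $C^{2+\eta}_x$ here. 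Each factor of $\Delta y$ or $\Delta \tilde{y}$ supplies a factor $m(u-s)^\alpha$ or $|u_1-s_1|^{\alpha_1}|u_2-s_2|^{\alpha_2}$, while $\Delta(y-\tilde{y})$ supplies either $[y-\tilde{y}]_{(1,1),\alpha}$ or products of $[y-\tilde{y}]_{(1,0),\alpha},[y-\tilde{y}]_{(0,1),\alpha}$. Combining with the time regularity $\|\tilde{A}\|_{C^\gamma_tC^{2+\eta}_x}\,m(t-s)^\gamma$ yields the required $m(t-s)^{\gamma+\alpha\eta}$ control.

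Finally, inserting the bounds on $|\Xi_{s,t}|$ and $[\delta\Xi]_{\alpha,\gamma+\alpha\eta}$ into \eqref{eq:sew lem ineq} and using $m(t-s)^{\alpha}|t-s|^{\alpha\eta}\lesssim_T m(t-s)^{\gamma}$ on $[0,T]^2$ produces the stated inequality \eqref{eq:bound diff NLY integrals} with the constants $C_1,C_2$ in \eqref{eq:C const}. The appearance of the $\vee$ in $C_2$ reflects the fact that the Taylor remainder from Lemma \ref{Lem: reg of f} naturally splits into a ``linear in the four-point increment'' contribution (giving $[y-\tilde{y}]_{(1,1),\alpha}$) and a ``$(1+\eta)$-product of two-point increments'' contribution (giving the $(1+\eta)$-power of $[\cdot]_{(1,0),\alpha}+[\cdot]_{(0,1),\alpha}$), mirroring exactly the structure of \eqref{eq:bound NLY integral}.
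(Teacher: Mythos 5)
Your proposal follows essentially the same route as the paper: sew the difference germ $\Xi_{s,t}=\square_{s,t}A(y_s)-\square_{s,t}\tilde{A}(\tilde{y}_s)$, telescope in $A$ versus $y$, and isolate the four-point ``difference of differences'' in $\delta^1\circ\delta^2\Xi$ as the term forcing the $C^{2+\eta}_x$ hypothesis. The only cosmetic difference is that you control that term by a direct second-order Taylor expansion of $\square_{u,t}\tilde{A}$, whereas the paper introduces the integral-form remainder $\tilde{G}$ and applies the discrete Leibniz rule for rectangular increments (via \cite[Lemma 5]{Harang2020}) together with Lemma \ref{Lem: reg of f}; these are equivalent in substance and yield the same constants $C_1,C_2$.
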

	\begin{proof}
		To prove this, we will apply Lemma \ref{lem:2d sewing} to the increment   $\Xi_{s,t}:=\square_{s,t}A(y_s)-\square_{s,t}\tilde{A}(\tilde{y}_s)$, in order invoke the inequality \eqref{eq:sew lem ineq}. 
		We have $\Xi\in C^\gamma_t$, as we get that
		\begin{equation}\label{eq:bound xi}
			|\Xi_{s,t}|\lesssim_T (\|A\|_{C^\gamma_tC^{0}_x}+\|\tilde{A}\|_{C^\gamma_tC^{0}_x})m(t-s)^\gamma, 
		\end{equation}
		We proceed to  first  prove bounds for $\delta^i\Xi$ for $i=1,2$ and then we will provide bounds for $\delta^1\circ \delta^2\Xi$.
		
		For $u=(u_1,u_2)$ we observe that 
		\begin{equation*}
			\delta_{u_1}^1 \Xi_{s,t} =\square_{(u_1,s_2),t}A(y_s)-\square_{(u_1,s_2),t}A(y_{(u_1,s_2)})-
			\left[\square_{(u_1,s_2),t}\tilde{A}(\tilde{y}_s)-\square_{(u_1,s_2),t}\tilde{A}(\tilde{y}_{(u_1,s_2)})\right]. 
		\end{equation*}
		Define the function $G:[0,T]^2\times \RR^d\times \RR^d\rightarrow \RR^{d\times d}$ by \[G_{t}(x, \tilde{x}):=\int_0^1 DA_{t}(\theta x +(1-\theta)\tilde{x})\dd \theta ,  
		\]
		where $DA$ is the matrix valued derivative of $A$.  
		Similarly we define $\tilde{G}$ from the composition of $\tilde{y}$ and $\tilde{A}$.
		Note in particular that due to the assumption that $x\mapsto A(t,x)\in C^{2+\eta}_x$ for all $t\in [0,T]^2$, we have that 
		\begin{equation}\label{eq:G bound}
			|G_t(x, \tilde{x})|\leq\|A_t(\cdot)\|_{C^{1}_x}, \qquad   |G_t(x, \tilde{x})-G_t(z, \tilde{z})\leq |\|A_t(\cdot)\|_{C^{2}_x}(|x-\tilde
			x-z+\tilde{z}|+|\tilde{x}-\tilde{z}|)
		\end{equation}
		and similarly for $\tilde{G}$. 
		Then 
		\begin{equation*}
			\delta_{u_1}^1 \Xi_{s,t}=\square_{(u_1,s_2),t}G(y_s,y_{(u_1,s_2)})(y_s-y_{(u_1,s_2)})-\square_{(u_1,s_2),t}\tilde{G}(\tilde{y}_s,\tilde{y}_{(u_1,s_2)})(\tilde{y}_s-\tilde{y}_{(u_1,s_2)}).
		\end{equation*}
		By addition and subtraction of the term $\square_{(u_1,s_2),t}G(y_s,y_{(u_1,s_2)})(\tilde{y}_s-\tilde{y}_{(u_1,s_2)})$ in the above relation we will seek to control the following two terms
		\begin{align*}
			I_1(s,u,t) &= \square_{(u_1,s_2),t}G(y_s,y_{(u_1,s_2)})\left(y_s-y_{(u_1,s_2)}-\tilde{y}_s+\tilde{y}_{(u_1,s_2)}\right),
			\\
			I_2(s,u,t) &=  \left(\square_{(u_1,s_2),t}G(y_s,y_{(u_1,s_2)})-\square_{(u_1,s_2),t}\tilde{G}
			(\tilde{y}_s,\tilde{y}_{(u_1,s_2)})\right)(\tilde{y}_s-\tilde{y}_{(u_1,s_2)}). 
		\end{align*}
		For the term $I_1$, using \eqref{eq:G bound} it is readily checked that 
		\begin{equation*}
			|I_1(s,u,t)|\leq \|A\|_{C^\gamma_tC^1_x}[y-\tilde{y}]_{(1,0),\alpha;[s,t]}|t_1-s_1|^{\alpha_1+\gamma_1}|t_2-s_2|^{\gamma_2}. 
		\end{equation*}
		Next, we consider  $I_2$. To this end, first observe that by addition and subtraction of the term  $\square_{(u_1,s_2),t}G(\tilde{y}_s,\tilde{y}_{(u_1,s_2)})$ we have 
		\begin{multline*}
			\square_{(u_1,s_2),t}G(y_s,y_{(u_1,s_2)})-\square_{(u_1,s_2),t}\tilde{G}(\tilde{y}_s,\tilde{y}_{(u_1,s_2)})  
			\\
			= \left[\square_{(u_1,s_2),t}G(y_s,y_{(u_1,s_2)})-\square_{(u_1,s_2),t}G(\tilde{y}_s,\tilde{y}_{(u_1,s_2)})\right]+\left[\square_{(u_1,s_2),t}G(\tilde{y}_s,\tilde{y}_{(u_1,s_2)})-\square_{(u_1,s_2),t}\tilde{G}(\tilde{y}_s,\tilde{y}_{(u_1,s_2)})\right]. 
		\end{multline*}
		For the first bracket on the right hand side above,
		we use the second inequality in \eqref{eq:G bound} to see that
		\begin{equation*}
			|\square_{(u_1,s_2),t}G(y_s,y_{(u_1,s_2)})-\square_{(u_1,s_2),t}G(\tilde{y}_s,\tilde{y}_{(u_1,s_2)})|\leq 2 \|A\|_{C^\gamma_tC^2_x}\|y-\tilde{y}\|_{\infty;[s,t]} m(t-s)^\gamma.  
		\end{equation*}
		For the term in the second bracket, we use the first inequality in \eqref{eq:G bound} to observe that 
		\begin{align*}
			|\square_{(u_1,s_2),t}G(\tilde{y}_s,\tilde{y}_{(u_1,s_2)})-\square_{(u_1,s_2),t}\tilde{G}(\tilde{y}_s,\tilde{y}_{(u_1,s_2)})| &\leq \|A-\tilde{A}\|_{C^\gamma_tC^1_x}m(t-s)^\gamma. 
		\end{align*}
		Combining the above estimates, we see that 
		\begin{equation*}
			|I_2(s,u,t)|\lesssim [\tilde{y}]_{(1,0),\alpha;[s,t]}(\|A\|_{C^\gamma_tC^2_x}\|y-\tilde{y}\|_{\infty;[s,t]}+\|A-\tilde{A}\|_{C^\gamma_tC^2_x}) |t_1-s_1|^{\alpha_1+\gamma_1}|t_2-s_2|^{\gamma_2}. 
		\end{equation*}
		From a combination of our estimates for $I_1$ and $I_2$, we see that 
		\begin{equation*}
			|\delta^1_{u_1}\Xi_{s,t}|\lesssim (1+\|A\|_{C^\gamma_tC^2_x}+[\tilde{y}]_{(1,0),\alpha;[s,t]})(|y_s-\tilde{y}_s|+[y-\tilde{y}]_{\alpha;[s,t]}+\|A-\tilde{A}\|_{C^\gamma_tC^2_x}) |t_1-s_1|^{\alpha_1+\gamma_1}|t_2-s_2|^{\gamma_2}. 
		\end{equation*}
		By similar considerations, we can show that 
		\begin{equation*}
			|\delta^2_{u_2}\Xi_{s,t}|\lesssim (1+\|A\|_{C^\gamma_tC^2_x}+[\tilde{y}]_{(0,1),\alpha;[s,t]})(|y_s-\tilde{y}_s|+[y-\tilde{y}]_{\alpha;[s,t]}+\|A-\tilde{A}\|_{C^\gamma_tC^2_x}) |t_1-s_1|^{\gamma_1}|t_2-s_2|^{\alpha_2+\gamma_2}. 
		\end{equation*}
		Next we move on to consider the term $\delta^1\circ \delta^2 \Xi$.
		By linearity of the $\delta$-operators, we have similarly to \eqref{eq:delta12 on A} that 
		\begin{equation*}
			\begin{split}
				\delta^1_{u_1}\circ \delta^2_{u_2} \Xi_{s,t} &= -\left(\square_{u,t}A(y_u)-\square_{u,t}A(y_{u_1,s_2})-\square_{u,t}A(y_{s_1,u_2})+\square_{u,t}A(y_s)\right)\\
				&+\left(\square_{u,t}\tilde{A}(\tilde{y}_u)-\square_{u,t}\tilde{A}(\tilde{y}_{u_1,s_2})-\square_{u,t}\tilde{A}(\tilde{y}_{s_1,u_2})+\square_{u,t}\tilde{A}(\tilde{y}_s)\right). 
			\end{split}
		\end{equation*}
		By addition and subtraction of $\tilde{A}(y)$, we see that this is the same as 
		\begin{equation}\label{eq:del 1 2 relation}
			\begin{split}
				\delta^1_{u_1}\circ \delta^2_{u_2} \Xi_{s,t} &= -\left(\square_{u,t}(A-\tilde{A})(y_u)-\square_{u,t}(A-\tilde{A})(y_{u_1,s_2})-\square_{u,t}(A-\tilde{A})(y_{s_1,u_2})+\square_{u,t}(A-\tilde{A})(y_s)\right)\\
				&+\Big(\square_{u,t}\tilde{A}(\tilde{y}_u)-\square_{u,t}\tilde{A}(y_u)-\square_{u,t}\tilde{A}(\tilde{y}_{u_1,s_2})+\square_{u,t}\tilde{A}(y_{u_1,s_2})\\
				&-\square_{u,t}\tilde{A}(\tilde{y}_{s_1,u_2})+\square_{u,t}\tilde{A}(y_{s_1,u_2})+\square_{u,t}\tilde{A}(\tilde{y}_s)-\square_{u,t}\tilde{A}(y_s)\Big)
			\end{split}
		\end{equation}
		For the first term on the right hand side, we see that by defining a new function $\bar{A}=A-\tilde{A}$, we can easily use the same arguments as in the proof of Proposition \ref{prop. 2d NLY integral} (see in particular \eqref{eq:est on delta 12}) to see that 
		\begin{equation}\label{eq:bound first term A-A}
			\begin{split}
				&|\square_{u,t}(A-\tilde{A})(y_u)-\square_{u,t}(A-\tilde{A})(y_{u_1,s_2})-\square_{u,t}(A-\tilde{A})(y_{s_1,u_2})+\square_{u,t}(A-\tilde{A})(y_s)|\\
				&\lesssim \|A-\tilde{A}\|_{C^\gamma_tC^{2}_x}([y]_{\alpha;[s,t]}\vee [y]_{\alpha;[s,t]}^{2}) m(t-s)^{\alpha+\gamma}. 
			\end{split}
		\end{equation}
		Note that in contrast to \eqref{eq:est on delta 12} we have $A,\tilde{A}\in C^{2+\eta}$ and thus the $\eta$ dependence on the right hand side above disappear. 
		The remaining term in \eqref{eq:del 1 2 relation} must be treated differently, and in a similar procedure as what we did for $\delta^1 \Xi$. By a first order Taylor approximation, we see that for $x, \tilde{x}\in \RR^d$
		\begin{equation*}
			\square_{u,t}\tilde{A}(x)-\square_{u,t} \tilde{A}(\tilde{x})=\square_{u,t}\tilde{G}(x,\tilde{x})(x-\tilde{x}). 
		\end{equation*}
		In order to control the right hand side of \eqref{eq:del 1 2 relation} it now remains  to bound the term 
		\begin{equation*}
			\begin{split}
				&+\Big(\square_{u,t}\tilde{A}(\tilde{y}_u)-\square_{u,t}\tilde{A}(y_u)-\square_{u,t}\tilde{A}(\tilde{y}_{u_1,s_2})+\square_{u,t}\tilde{A}(y_{u_1,s_2})\\
				&-\square_{u,t}\tilde{A}(\tilde{y}_{s_1,u_2})+\square_{u,t}\tilde{A}(y_{s_1,u_2})+\square_{u,t}\tilde{A}(\tilde{y}_s)-\square_{u,t}\tilde{A}(y_s)\Big)\\
				&=\square_{u,t}\tilde{G}(\tilde{y}_u, y_u)(\tilde{y}_u-y_u)-
				\square_{u,t}\tilde{G}(\tilde{y}_{u_1, s_2}, y_{u_1, s_2})(\tilde{y}_{u_1, s_2}-y_{u_1, s_2})\\
				&-\square_{u,t}\tilde{G}(\tilde{y}_{s_1, u_2}, y_{s_1, u_2})(\tilde{y}_{s_1, u_2}-y_{s_1, u_2})+\square_{u,t}\tilde{G}(\tilde{y}_s, y_s)(\tilde{y}_s-y_s)\\
				&=:\square_{s,u}\left(\square_{u,t} \tilde{G}(y,\tilde{y})(y-\tilde{y})\right).
			\end{split}
		\end{equation*}
		Due to the multiplicative nature of $\square_{u,t}\tilde{G}(y,\tilde{y})(y-\tilde{y})$, we must be careful. From \cite[Lemma 5]{Harang2020} it follows that we have the decomposition
		\begin{equation*}
			\square_{s,u}\left(\square_{u,t} \tilde{G}(y,\tilde{y})(y-\tilde{y})\right)= \sum_{i=1}^3 J^i_{s,u,t}, 
		\end{equation*}
		where
		\begin{equation*}
			\begin{aligned}
				J^1_{s,u,t}&:=  \left[ \square_{s,u}\left(\square_{u,t} \tilde{G}(y,\tilde{y})\right)\right](y_u-\tilde{y}_u),
				\\
				J^2_{s,u,t}&:= \left[\square_{u,t}\tilde{G}(y_s,\tilde{y}_s)\right]\square_{s,u}(y-\tilde{y}),
				\\
				J^3_{s,u,t}&:=  [\square_{u,t}\tilde{G}(y_{u_1,s_2},\tilde{y}_{u_1,s_2})-\square_{u,t}\tilde{G}(y_{s_1,s_2},\tilde{y}_{s_1,s_2})][(y_{s_1,u_2}-\tilde{y}_{s_1,u_2})-(y_s-\tilde{y}_s)].
			\end{aligned}
		\end{equation*}
		Each of these terms must be treated separately. The simplest term is $J^2$, where we observe that 
		\begin{equation*}
			|J^2_{s,u,t}|\lesssim \|\tilde{G}\|_{C^\gamma_t L^\infty_x}[y-\tilde{y}]_{(1,1),\alpha;[s,t]}m(t-s)^{\alpha+\gamma}. 
		\end{equation*}
		Invoking again the first bound in \eqref{eq:G bound}, we see that 
		\begin{equation}\label{eq:bound J^2}
			|J^2_{s,u,t}|\lesssim \|\tilde{A}\|_{C^\gamma_t C^1_x}[y-\tilde{y}]_{(1,1),\alpha;[s,t]}m(t-s)^{\alpha+\gamma}.
		\end{equation}
		Next we consider $J^1$. By Lemma \ref{Lem: reg of f}, setting $z=(y,\tilde{y})$, we see that 
		\begin{equation*}
			\begin{split}
				| \square_{s,u}\left(\square_{u,t} \tilde{G}(y,\tilde{y})\right)|&\lesssim \|\square_{u,t}\tilde{G}\|_{C^{1+\eta}_x}[( [z]_{(1,1),\alpha;[s,t]})\vee([z]_{(0,1),\alpha;[s,t]}+[z]_{(1,0),\alpha;[s,t]})^{1+\eta}] m(t-s)^{\gamma+\eta\alpha}
				\\
				&\lesssim \|\tilde{A}\|_{C^\gamma_tC^{2+\eta}_x}[( [z]_{(1,1),\alpha;[s,t]})\vee([z]_{(0,1),\alpha;[s,t]}+[z]_{(1,0),\alpha;[s,t]})^{1+\eta}] m(t-s)^{\gamma+\eta\alpha}, 
			\end{split}
		\end{equation*}
		where we have used a slight extension of the second inequality in \eqref{eq:G bound} using Lemma \ref{Lem: reg of f}. Let us stress that it is precisely at this point that the required regularity of $A, \tilde{A}\in C^\gamma_tC^{2+\eta}_x$ enters into the picture, while previous esimates required less spatial regularity.  Note that 
		$[z]_\alpha\leq [y]_\alpha+[\tilde{y}]_\alpha$. 
		Thus for $J^1$ we get 
		\begin{multline}\label{eq:bound J1}
			|J^1_{s,u,t}| \lesssim  \|\tilde{A}\|_{C^\gamma_tC^{2+\eta}_x}([y]_{(1,1),\alpha;[s,t]}+[\tilde{y}]_{(1,1),\alpha;[s,t]})\\
			\vee ([y]_{(0,1),\alpha;[s,t]}+[y]_{(1,0),\alpha;[s,t]}+[\tilde{y}]_{(0,1),\alpha;[s,t]}+[\tilde{y}]_{(1,0),\alpha;[s,t]})^{1+\eta}] \\
			(|y_s-\tilde{y}_s|+[y-\tilde{y}]_{\alpha;[s,t]})m(t-s)^{\gamma+\eta\alpha}, 
		\end{multline}
		where we have used that $\|y-\tilde{y}\|_{\infty;[s,t]}\leq |y_s-\tilde{y}_s|+[y-\tilde{y}]_{\alpha;[s,t]}$. 
		At last we consider $J^3$, and by using the fact that $z\mapsto \tilde{G}(z)$ is differentiable, we get from similar type of estimates as above 
		\begin{equation}\label{eq:bound J3}
			|J^3_{s,u,t}|\lesssim \|\tilde{A}\|_{C^\gamma_tC^{2}_x}([y]_{\alpha;[s,t]}+[\tilde{y}]_{\alpha;[s,t]})[y-\tilde{y}]_{\alpha;[s,t]} m(t-s)^{\alpha+\gamma}. 
		\end{equation}
		Combining our bounds for $J^1,J^2$ and $J^3$ from \eqref{eq:bound J1}, \eqref{eq:bound J^2} \eqref{eq:bound J3}, we have that 
		\begin{multline}\label{eq:bound G term}
			| \square_{s,u}\left(\square_{u,t} \tilde{G}(y,\tilde{y})(y-\tilde{y})\right)|
			\lesssim_T  \|\tilde{A}\|_{C^\gamma_tC^{2+\eta}_x}[([y]_{(1,1),\alpha;[s,t]}+[\tilde{y}]_{(1,1),\alpha;[s,t]})
			\\
			\vee ([y]_{(0,1),\alpha;[s,t]}+[y]_{(1,0),\alpha;[s,t]}+[\tilde{y}]_{(0,1),\alpha;[s,t]}+[\tilde{y}]_{(1,0),\alpha;[s,t]})^{1+\eta}]  (|y_s-\tilde{y}_s|+[y-\tilde{y}]_{\alpha;[s,t]})m(t-s)^{\gamma+\eta\alpha}. 
		\end{multline}
		A bound for \eqref{eq:del 1 2 relation} now follows by a combination of \eqref{eq:bound G term} and \eqref{eq:bound first term A-A} and we obtain 
		\begin{equation*}
			| \delta_{u_1}^1\circ \delta^2_{u_2} \Xi_{s,t}|
			\\
			\leq  C_2 (\|A-\tilde{A}\|_{C^\gamma_tC^{2+\eta}_x} +|y_s-\tilde{y}_s|+[y-\tilde{y}]_{\alpha;[s,t]})m(t-s)^{\gamma+\eta\alpha},
		\end{equation*}
		where $C_2$ is given as in \eqref{eq:C const}. It now follows that we can apply the 2D sewing lemma (Lemma \ref{lem:2d sewing}), and invoking the inequality \eqref{eq:sew lem ineq} in this lemma, the bound in \eqref{eq:bound diff NLY integrals} follows. 
	\end{proof}
	
	\begin{rem}
		Again, we require one more degree of spatial regularity in our setting of stability of  2D non-linear Young integrals than in the 1d-nonlinear Young setting (compare e.g. to \cite[Theorem 2.7.4]{Galeati2020NonlinearYD} for the limit case $\delta=1$). Essentially, as stability estimates of the above form require one additional degree of spatial regularity compared with the regime of existence of the integral, the difference in spatial regularity constraints observed already in Remark \ref{difference 1d2d integral} carries over. 
	\end{rem}

	With the stability estimate for NLY integrals, we are now ready to also consider integral equations, where the 2D NLY integral appear. Similarly as how the field of (1D) non-linear Young equations is simply a generalisation of classical Young differential equations, the 2D non-linear Young equations are  generalizing the already established notion of 2D Young equations.
	
	In the following theorem we prove existence and uniqueness of these equations for sufficiently smooth non-linear functions $A:[0,T]^2\times \RR^d\rightarrow \RR ^d$.  As the techniques are strongly based on the proof of \cite[Theorem 25]{Harang2020}, we will be mostly concerned with various estimates that differ from this reference due to the non-linear Young structure of the integral.

	\begin{thm}[Existence and Uniqueness]\label{thm:nly equation}
		Suppose $A\in C^\gamma_t C^{2+\eta}_x$ for some $\gamma\in (\frac{1}{2},1]^2$ and $\eta \in (0,1)$. Furthermore, let $\xi\in C^\gamma_t$ be such that for any $s<t\in [0,T]^2$,  $\square_{s,t} \xi=0$. 
		If $(1+\eta)\gamma>1$, then there exists a unique solution $\theta\in C^\gamma_t$ to the equation 
		\begin{equation}\label{eq:2d nly eq}
			\theta_t=\xi_t+\int_0^t A(\dd s,\theta_s),\quad t\in [0,T]^2, 
		\end{equation}
		where the integral is interpreted in the sense of the NLY integral in Proposition \ref{prop. 2d NLY integral}. 
	\end{thm}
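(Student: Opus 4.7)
The plan is to realize $\theta$ as the unique fixed point of the Picard map
\[
\Phi(\theta)_t := \xi_t + \int_0^t A(\dd s, \theta_s),
\]
first on a small sub-rectangle $[0,\tau]^2$ via the Banach fixed point theorem, and then propagate the solution to all of $[0,T]^2$ by a two-dimensional gluing procedure. The hypothesis $(1+\eta)\gamma_i > 1$ is precisely the condition $\alpha\eta + \gamma > 1$ of Proposition \ref{prop. 2d NLY integral} taken with $\alpha = \gamma$, so $\Phi$ maps $C^\gamma_t$ into itself; evaluation at $t=(0,0)$ forces any fixed point to satisfy $\theta_{(0,0)} = \xi_{(0,0)}$ and, since the integral vanishes on $\partial[0,\tau]^2$, to agree with $\xi$ on that boundary. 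By Remark \ref{rem: decomp of 2d functions} I may therefore restrict attention to functions of the form $\theta = \xi + y$ with $y$ vanishing on $\partial[0,\tau]^2$, and take as search space the closed ball $B_R := \{\theta = \xi + y : [\theta]_\gamma \leq R\} \subset C^\gamma_t([0,\tau]^2;\RR^d)$.

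For invariance, the a priori bound \eqref{eq:bound NLY integral} applied to $\theta \in B_R$ yields
\[
\bigl|\square_{s,t}\Phi(\theta)\bigr| \;\leq\; \|A\|_{C^\gamma_t C^{1+\eta}_x}\, m(t-s)^\gamma \bigl(1 + C(1+R)^{1+\eta} |t-s|^{\eta\gamma}\bigr),
\]
while the decomposition \eqref{eq:decomp of holder norm} reduces the boundary semi-norms $[\Phi(\theta)]_{(1,0),\gamma}, [\Phi(\theta)]_{(0,1),\gamma}$ to those of $\xi$. Choosing $R$ of the order of $[\xi]_\gamma + \|A\|_{C^\gamma_t C^{1+\eta}_x}$ and then $\tau$ small enough that $(1+R)^{1+\eta}\tau^{\eta(\gamma_1+\gamma_2)}$ is sufficiently small gives $\Phi(B_R) \subset B_R$.

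For contraction, I would apply the stability estimate of Proposition \ref{prop:stability} with $A = \tilde A$, which kills the $C_1$ term and leaves
\[
\bigl|\square_{s,t}(\Phi(\theta) - \Phi(\tilde\theta))\bigr| \;\leq\; C_2(R)\bigl(|\theta_s - \tilde\theta_s| + [\theta - \tilde\theta]_{\gamma; [s,t]}\bigr) m(t-s)^\gamma.
\]
Since both $\theta$ and $\tilde\theta$ share the boundary data $\xi$, their difference vanishes on $\partial[0,\tau]^2$, so Remark \ref{rem: decomp of 2d functions} controls $|\theta_s - \tilde\theta_s|$ and both boundary semi-norms of $\theta-\tilde\theta$ by $[\theta-\tilde\theta]_{(1,1),\gamma}$ times positive powers of $\tau$. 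Together with the extra factor $|t-s|^{\eta\gamma}$ produced by the sewing step, this yields a Lipschitz constant of the form $C(R)\tau^{\kappa}$ with $\kappa > 0$, so shrinking $\tau$ further makes $\Phi$ a strict contraction on $B_R$. Banach's fixed point theorem furnishes a unique $\theta \in B_R$ solving \eqref{eq:2d nly eq} on $[0,\tau]^2$, and comparing on a sufficiently small sub-rectangle any other candidate solution in $C^\gamma_t$ with this fixed point upgrades uniqueness from $B_R$ to the whole space.

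The main obstacle is extending this local solution to the full rectangle $[0,T]^2$. In contrast with the one-dimensional situation, one cannot simply restart the equation at $t = \tau$: the value of $\theta$ at a point $(t_1,t_2)$ depends on the integral of $A(\dd s,\theta_s)$ over the entire rectangle $[0,t_1]\times[0,t_2]$. Following the strategy of \cite[Thm.~25]{Harang2020}, I would partition $[0,T]^2$ into a uniform grid of rectangles $R_{i,j} = [i\tau,(i+1)\tau]\times[j\tau,(j+1)\tau]$ and, proceeding inductively on the lexicographic order of $(i,j)$, set up on each $R_{i,j}$ a local nonlinear Young Goursat problem for the rectangular part $y_t := \square_{(i\tau,j\tau),t}\theta$. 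Its bottom and left boundary data are provided by the solution already constructed on the previously-treated rectangles, while the part of the integral coming from $[0,(i\tau,j\tau)]$ is absorbed as an explicit known function into a new boundary term fulfilling the hypothesis $\square = 0$ of Theorem \ref{thm:nly equation}. Since the smallness parameter $\tau$ depends only on $\|A\|_{C^\gamma_t C^{2+\eta}_x}$, $[\xi]_\gamma$ and a uniform a priori bound on $[\theta]_\gamma$ propagated inductively via \eqref{eq:bound NLY integral}, finitely many applications of the local fixed-point argument cover $[0,T]^2$ and produce the unique global solution.
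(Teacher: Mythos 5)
Your proposal is correct and follows the same overall architecture as the paper's proof: a Banach fixed point argument for the Picard map on a small rectangle, carried out in the affine space of paths agreeing with $\xi$ on the boundary and metrized by the $(1,1)$ semi-norm, followed by the lexicographic gluing over a grid of rectangles taken from \cite[Theorem 25]{Harang2020}. The one genuine difference is where the smallness in $\tau$ comes from. The paper contracts in the slightly weaker norm $C^{\gamma-\epsilon}_t$ (with $\epsilon$ chosen so that $(1+\eta)(\gamma-\epsilon)>1$ still holds), so that dividing the estimates of Propositions \ref{prop. 2d NLY integral} and \ref{prop:stability} by $m(t-s)^{\gamma-\epsilon}$ produces a uniform factor $m(\tau)^{\epsilon}$ in both the invariance and the contraction bounds, and the ball can then be taken to be the unit ball around $\xi$. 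You stay in the $C^\gamma_t$ norm, absorb the non-small zeroth-order contribution $\|A\|_{C^\gamma_tC^{0}_x}$ into a larger radius $R\sim[\xi]_\gamma+\|A\|_{C^\gamma_tC^{1+\eta}_x}$ for invariance, and for contraction extract smallness from the vanishing of $\theta-\tilde\theta$ on $\partial[0,\tau]^2$ (which bounds $\sup_s|\theta_s-\tilde\theta_s|$ and the $(1,0)$, $(0,1)$ semi-norms by $[\theta-\tilde\theta]_{(1,1),\gamma}$ times positive powers of $\tau$) together with the factor $|t-s|^{\eta\gamma}$ carried by the sewing remainder. This works and avoids the bookkeeping with $\epsilon$, but there is one point you must make explicit: the contraction cannot be run off the stability bound \eqref{eq:bound diff NLY integrals} as stated, since with $A=\tilde A$ that inequality still contains the term $C_2\,[\theta-\tilde\theta]_{(1,1),\gamma;[s,t]}\,m(t-s)^\gamma$ with no small prefactor. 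You need the intermediate, germ-subtracted estimate
\[
\Big|\square_{s,t}\big(\Phi(\theta)-\Phi(\tilde\theta)\big)-\big(\square_{s,t}A(\theta_s)-\square_{s,t}A(\tilde\theta_s)\big)\Big|\lesssim C_2\big(|\theta_s-\tilde\theta_s|+[\theta-\tilde\theta]_{\gamma;[s,t]}\big)\,|t-s|^{\eta\gamma}\,m(t-s)^\gamma,
\]
which is what the sewing step actually delivers inside the proof of Proposition \ref{prop:stability} (and is the form the paper invokes again in Proposition \ref{stability of solutions}), combined with the elementary bound $|\square_{s,t}A(\theta_s)-\square_{s,t}A(\tilde\theta_s)|\leq\|A\|_{C^\gamma_tC^{1}_x}\,|\theta_s-\tilde\theta_s|\,m(t-s)^\gamma$ and the boundary-vanishing of $\theta_s-\tilde\theta_s$. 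With that substitution your Lipschitz constant is of order $\|A\|_{C^\gamma_tC^{1}_x}m(\tau)^\gamma+C_2\,\tau^{\eta\gamma}$ and the argument closes; the global continuation step is then identical in both proofs, and both defer its combinatorial details to \cite{Harang2020}.
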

	\begin{proof}
		Let  $\epsilon<\gamma$ be small and $\tau \in [0, T]^2$ be two parameters to be chosen appropriately later, and let $\cC_\tau^{\gamma-\epsilon}(\xi)$ be a collection of paths $z$ in  $C^{\gamma-\epsilon}([0,\tau]^2;\RR^d)$ with the property that $z=\xi$ on the boundary (i.e. on $\partial[0,\tau]^2:=\{0\}\times [0,\tau] \cup [0,\tau]\times \{0\}$). In this space, the norms are restricted to the interval $[0,\tau]$, in the sense that $[\cdot]_{\gamma-\epsilon}:=[\cdot]_{{\gamma-\epsilon};[0,\tau]}$.  
		Note that this implies that all $z\in \cC^{\gamma-\epsilon}_\tau(\xi)$ can be decomposed as $z=\xi+y$, where $y$ is zero on the boundary, and by Remark \ref{rem: decomp of 2d functions} we have  that 
		\begin{equation*}
			\|z\|_{{\gamma-\epsilon}}\sim_\tau |z_0|+[z]_{\gamma-\epsilon} =|\xi_0|+[\xi]_{(1,0),{\gamma-\epsilon}}+[\xi]_{(0,1),{\gamma-\epsilon}}+[y]_{(1,1),{\gamma-\epsilon}}.  
		\end{equation*}
		Thus with the metric defined by $\cC^{\gamma-\epsilon}_\tau(\xi) \ni(x,y)\mapsto [x-y]_{(1,1),{\gamma-\epsilon}} $ it follows that $\cC^{\gamma-\epsilon}_\tau(\xi)$ is a complete affine metric space. Since $(1+\eta)\gamma>1$ and $\gamma>\frac{1}{2}$ choose  $\epsilon>0$ small such that $(1+\eta)({\gamma-\epsilon})>1$.  
		On the space $\cC^{\gamma-\epsilon}_\tau(\xi)$ define the solution mapping $M$ by 
		\begin{equation*}
			\cC^{\gamma-\epsilon}_\tau(\xi) \ni z \mapsto M_\tau (z):=\{\xi_t + \int_0^t A(\dd s,z_s)|\,t\in [0,\tau_1]\times [0,\tau_2] \}. 
		\end{equation*}
		Define now $\cB_\tau^{\gamma-\epsilon}(\xi)\subset \cC^{\gamma-\epsilon}_\tau(\xi)$ to be the closed unit ball in $\cC^{\gamma-\epsilon}_\tau(\xi)$ centered at $\xi$, i.e. 
		\begin{equation*}
			\cB_\tau^{\gamma-\epsilon}(\xi):=\{z\in \cC^{\gamma-\epsilon}_\tau(\xi)| \, [z-\xi]_{(1,1), {\gamma-\epsilon}}\leq 1\}. 
		\end{equation*}
		Note also that for any $z\in \cB^{\gamma-\epsilon}_\tau(\xi)$, then 
		\begin{equation}\label{eq:bound N xi}
			[z]_{\gamma-\epsilon}=[y]_{(1,1),{\gamma-\epsilon}}+[\xi]_{(1,0),{\gamma-\epsilon}}+[\xi]_{(0,1),{\gamma-\epsilon}}\leq 1+[\xi]_{(1,0),{\gamma-\epsilon}}+[\xi]_{(0,1),{\gamma-\epsilon}}=:N_\xi. 
		\end{equation}
		From here, a classical Picard fixed point argument can be developed; first we prove that the solution map  $M_\tau$ leaves $\cB^{\gamma-\epsilon}_\tau(\xi)$ invariant, i.e. $M_\tau(\cB^{\gamma-\epsilon}_\tau(\xi))\subset \cB^{\gamma-\epsilon}_\tau(\xi)$, and then we prove that the solution map is a contraction, i.e. for $z,y\in \cB^{\gamma-\epsilon}_\tau(\xi)$ then there exists a $q\in (0,1)$ such that 
		\begin{equation*}
			[M_\tau (y)-M_\tau(z)]_{\gamma-\epsilon} \leq q[y-z]_{\gamma-\epsilon}. 
		\end{equation*}
		We begin by proving invariance. For any $z\in \cB^{\gamma-\epsilon}_\tau(\xi)$ it follows from \eqref{eq:bound NLY integral}  that there exists a constant $C>0$ such that
		\begin{equation}\label{eq:local est}
			[M_\tau(z)-\xi]_{(1,1),{\gamma-\epsilon}}=[\int_0^\cdot A(\dd s,z_s)]_{(1,1),{\gamma-\epsilon}} \leq C m(\tau)^{\epsilon}\|A\|_{C^\gamma_tC^{1+\eta}_x}(1+[z]_{\gamma-\epsilon}^{1+\eta}),
		\end{equation}
		where we recall that  $m(\tau)^{\epsilon}=\tau_1^{\epsilon_1}\tau_2^{\epsilon_2}$. 
		Bounding the term $[z]_{\gamma-\epsilon}$ by $N_\xi$ as defined in \eqref{eq:bound N xi} we get
		\begin{equation*}
			[M_\tau(z)-\xi]_{(1,1),{\gamma-\epsilon}}\leq C m(\tau)^{\epsilon}\|A\|_{C^\gamma_tC^{1+\eta}_x}(1+N_\xi^{1+\eta} ), 
		\end{equation*}
		choosing $\tau^1$ small enough, we see that $M_{\tau^1}$ leaves the unit ball $\cB^{\gamma-\epsilon}_{\tau_1}(\xi)$ invariant. Note however that the choice $\tau^1$ depends on the boundary $\xi$, but only through the $(1,0)$ and $(0,1)$ H\"older norm, and does not depend on $z-\xi$,  which will be important later. 
		
		For contraction we invoke the stability inequality from Proposition \ref{prop:stability} to see that for $z,\tilde{z}\in \cB^{{\gamma-\epsilon}}_\tau(\xi)$ we have 
		\begin{equation*}
			[M_\tau(z)-M_\tau(\tilde{z})]_{(1,1),{\gamma-\epsilon}}\leq m(\tau)^{\epsilon}2K\|A\|_{C^\gamma_t C^{2+\eta}_x}(1+N_\xi)^{1+\eta}[z-\tilde{z}]_{(1,1),{\gamma-\epsilon}} ,
		\end{equation*}
		where we have used that $[z-\tilde{z}]_{{\gamma-\epsilon}}=[z-\tilde{z}]_{(1,1),{\gamma-\epsilon}}$ since $z$ and $\tilde{z}$ share the same boundary processes $\xi$. 
		Again, choose $\tau^2=\tau$ small, such that 
		\begin{equation*}
			m(\tau^2)^{\epsilon}2K\|A\|_{C^\gamma_tC^{2+\eta}_x}(1+N_\xi)^{1+\eta}<1, 
		\end{equation*}
		we see that $M_{\tau^2}$ is a contraction mapping from $\cB_{\tau^2}^{\gamma-\epsilon}(\xi)$ into $\cB_{\tau^2}^{\gamma-\epsilon}(\xi)$. Let $\bar{\tau}=\tau^1\wedge \tau^2$, and then it follows that there exists a unique  fixed point of $z\mapsto M_{\bar{\tau}}(z)$ on the interval $[0,\bar{\tau}]$.  
		For integers $k,j\geq 1$, the solution can now be iterated to rectangles on the form $[k\bar{\tau}_1,(k+1)\bar{\tau}_1]\times[j\bar{\tau}_2,(j+1)\bar{\tau}_2] \subset[0,T]^2$ by following the exact procedure provided in the proof of \cite[Theorem 25]{Harang2020}. The crucial point to obtain global existence is to use the fact $A$ and its derivatives is globally bounded and 
		the fact that the constant $N_\xi$, and thus $\tau^1$ and $\tau^2$  only depends on the boundary information, and not on $z-\xi$ for $z\in \cB_{\tau}^{\gamma-\epsilon}(\xi)$. The detailed proof of this point is rather lengthy and written in full detail in \cite{Harang2020}, and we therefore omit any further details here. 
		Once it is proven that the solution indeed exists on the full rectangle $[0,T]^2$, it follows that it is contained in $C^\gamma([0,T]^2;\RR^d)$. In fact, it is then readily seen that the solution $\theta$ to \eqref{eq:2d nly eq} is contained in $C^{\gamma-\epsilon}([0,T]^2;\RR^d)$. Indeed, observe that 
		\begin{equation*}
			|\square_{s,t}\theta|=|\int_s^t A(\dd s,\theta_s)|\lesssim_T \|A\|_{C^\gamma_tC^{1+\eta}_x}(1+[\theta]_{\gamma-\epsilon})m(t-s)^\gamma, 
		\end{equation*}
		and similar estimates shows that $[\theta]_{(1,0),\gamma}$ and $[\theta]_{(0,1),\gamma}$ are finite, and  thus we conclude that $\theta\in C^\gamma([0,T]^2;\RR^d)$.
	\end{proof}
	
	\begin{rem}
		In the above proof we have constructed a local solution on small rectangles $[0,\tau]=[0,\tau_1]\times [0,\tau_2]$, in order to iterate the solution to rectangles of the form $[k \tau_1, (k+1)\tau_1]\times [j \tau_2, (j+1) \tau_2]$, consistent with, and fully described in, \cite{Harang2020}. However, as pointed out by the anonymous referee,  based on the estimate in \eqref{eq:local est}, it is clear that one only really need to take $\tau=(\tau_1,T)$ for $\tau_1$ sufficiently small, and from there one may then do the iteration of the solution on small "strips" $[k \tau_1, (k+1)\tau_1]\times [0,T]$ of the rectangle $[0,T]^2$. Such a  solution method would certainly be similar to the one that is shown in \cite{Harang2020} and potentially reduce the length of some arguments slightly, but for brevity of the presentation we have here used the method outlined in \cite{Harang2020} to avoid a detail presentation of this step. 
	\end{rem}
	We conclude this section with the following proposition providing stability of the solutions to the 2D non-linear Young equations in terms of the non-linear function $A$. 
	\begin{prop}
		\label{stability of solutions}
		Consider two functions $A,\tilde{A}\in C^\gamma_tC^{2+\eta}_x$ for some $\gamma\in (\frac{1}{2},1]^2$ and $\eta \in (0,1)$ such that $(1+\eta)\gamma>1$.   Furthermore, let $\xi,\tilde{\xi}\in C^\gamma_t$ be such that for any $s<t\in [0,T]^2$,  $\square_{s,t} \xi=\square_{s,t} \tilde{\xi}=0$.    Let $\theta,\tilde{\theta}\in C^\gamma_t $ be two solutions to \eqref{eq:2d nly eq} driven by $(A,\xi)$ and $(\tilde{A},\xi)$ respectively, and assume there exists a constant  $M>0$ such that 
		\begin{equation}\label{eq:M const}
			\|\theta\|_\gamma \vee \|\tilde{\theta}\|_\gamma \vee \|A\|_{C^\gamma_tC^{2+\eta}_x}\vee \|\tilde{A}\|_{C^\gamma_tC^{2+\eta}_x}\leq M. 
		\end{equation}
		Then the solution map $(A,\xi)\mapsto \theta$ is continuous, and there exists a constant $C$ depending on $\gamma,M,T$ such that 
		\begin{equation}\label{eq:stability solution}
			[\theta-\tilde{\theta}]_{\gamma} \leq C( \|\xi-\tilde{\xi}\|_\gamma+ \|A-\tilde{A}\|_{C^\gamma_t C^{2+\eta}_x}).
		\end{equation}
	\end{prop}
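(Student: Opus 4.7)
Set $\Delta := \theta - \tilde\theta$, $\Delta^\xi := \xi - \tilde\xi$, and write $I_t := \int_0^t A(\dd s, \theta_s)$, $\tilde I_t := \int_0^t \tilde A(\dd s, \tilde\theta_s)$, so $\Delta = \Delta^\xi + (I - \tilde I)$. Since $I, \tilde I$ vanish on $\partial[0,T]^2$ and $\square_{s,t}\xi = \square_{s,t}\tilde\xi = 0$ by assumption, we have the key identities $\square_{s,t}\Delta = \square_{s,t}(I - \tilde I)$ and $\Delta|_{\partial [0,T]^2} = \Delta^\xi|_{\partial [0,T]^2}$. (We read the hypothesis as $\theta, \tilde\theta$ being driven by $(A, \xi)$ and $(\tilde A, \tilde\xi)$ respectively, as required for $\|\xi-\tilde\xi\|_\gamma$ to appear on the right-hand side.)

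The heart of the argument is an application of Proposition \ref{prop:stability} with exponent $\alpha := \gamma - \epsilon$, where $\epsilon > 0$ is chosen small enough so that $(1 + \eta)(\gamma - \epsilon) > 1$. This yields, for any rectangle $[s, t] \subset [0,T]^2$,
\[
|\square_{s,t}(I - \tilde I)| \leq \big(\tilde C_1 \|A - \tilde A\|_{C^\gamma_t C^{2+\eta}_x} + \tilde C_2(|\Delta_s| + [\Delta]_{\gamma - \epsilon, [s, t]})\big) m(t-s)^\gamma,
\]
with $\tilde C_1, \tilde C_2$ bounded in terms of $M$ (via $[\theta]_{\gamma - \epsilon} \leq T^\epsilon [\theta]_\gamma \leq T^\epsilon M$). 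Restricting to a small rectangle $[s_0, s_0 + \tau]$ and dividing by $m(t-s)^{\gamma - \epsilon}$, the extra factor $m(t-s)^\epsilon$ is uniformly bounded by $m(\tau)^\epsilon$, providing the crucial smallness needed to absorb the $[\Delta]_{\gamma - \epsilon, [s_0, s_0 + \tau]}$ term appearing on the right.

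The three Hölder semi-norms defining $[\Delta]_{\gamma - \epsilon}$ are handled separately. The $(1,1)$-semi-norm follows immediately from the identity $\square\Delta = \square(I - \tilde I)$. For the $(1,0)$-part, decompose
\[
\Delta(t_1, v_2) - \Delta(u_1, v_2) = [\Delta^\xi(t_1, v_2) - \Delta^\xi(u_1, v_2)] + \square_{(u_1, 0), (t_1, v_2)}(I - \tilde I),
\]
the second term being controlled by Proposition \ref{prop:stability} applied to the rectangle $[(u_1,0),(t_1,v_2)]$ (which reaches the boundary and contributes a harmless $T^{\gamma_2}$ factor), with a gain of $(t_1-u_1)^{\epsilon_1}$ after dividing by $(t_1 - u_1)^{\gamma_1 - \epsilon_1}$. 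The $(0,1)$-part is analogous, and $\|\Delta\|_{\infty, [s_0, s_0+\tau]}$ is bounded via $\Delta_t = \Delta^\xi_t + \square_{0,t}(I - \tilde I)$ together with the same stability estimate. Choosing $\tau$ small enough that the combined coefficient of $[\Delta]_{\gamma - \epsilon, [s_0, s_0 + \tau]}$ is less than $1/2$, the estimate closes locally. This local bound is then iterated across $[0, T]^2$ using the same scheme as in Theorem \ref{thm:nly equation} (legitimate since $\tau$ depends only on $M, T, \gamma, \eta$, not on $\Delta$), yielding a global bound $[\Delta]_{\gamma - \epsilon, [0, T]^2} \leq C(\|\Delta^\xi\|_\gamma + \|A - \tilde A\|_{C^\gamma_t C^{2+\eta}_x})$.

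To upgrade from $\gamma - \epsilon$ to the full $\gamma$ scaling in the statement, we plug this bound back into the right-hand side of Proposition \ref{prop:stability} applied on the whole square without further localization: the terms $|\Delta_s|$ and $[\Delta]_{\gamma - \epsilon, [s,t]}$ are now directly controlled by $\|\Delta^\xi\|_\gamma + \|A - \tilde A\|_{C^\gamma_t C^{2+\eta}_x}$, so no loop appears and one reads off the bound on $[\Delta]_{(1,1), \gamma}$. The corresponding $(1,0)$ and $(0,1)$ estimates in the $\gamma$-scaling are obtained by the same decomposition as above. The main obstacle in this programme is that Proposition \ref{prop:stability} provides a bound whose scaling $m(t-s)^\gamma$ matches the target Hölder exponent exactly, leaving no margin to directly absorb $[\Delta]_\gamma$ on the right; the trade-off to $\gamma - \epsilon$ combined with localization is the key analytic idea, and the bookkeeping among the three semi-norms, with $(1,0)$ and $(0,1)$ requiring stability on rectangles that touch the boundary, is the principal technical burden.
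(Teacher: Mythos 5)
Your proof is correct and follows the same overall architecture as the paper's: both rest on Proposition \ref{prop:stability}, localize to small rectangles, absorb the H\"older semi-norm of $\theta-\tilde{\theta}$ by smallness of the rectangle, and iterate over $[0,T]^2$ via the scheme of \cite{Harang2020} (and your reading of the hypothesis as $(\tilde{A},\tilde{\xi})$ rather than $(\tilde{A},\xi)$ is the intended one, since $\|\xi-\tilde{\xi}\|_\gamma$ appears in the conclusion). The difference lies in where the smallness comes from. The paper does not downgrade the exponent: it subtracts the germ $\square_{s,t}A(\theta_s)-\square_{s,t}\tilde{A}(\tilde{\theta}_s)$ from the difference of integrals, so that the sewing remainder estimate \eqref{eq:sew lem ineq} produces the improved bound $(\cdots)\,|t-s|^{\eta\gamma}m(t-s)^\gamma$; the factor $|\tau|^{\eta\gamma}$ is the contraction parameter, and the germ is reinstated afterwards by the triangle inequality, its $\gamma$-semi-norm being directly controlled by $\|A-\tilde{A}\|_{C^\gamma_tC^{2+\eta}_x}$ and $|\theta_\rho-\tilde{\theta}_\rho|$. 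You instead use the integral bound of Proposition \ref{prop:stability} as stated, which scales exactly as $m(t-s)^\gamma$, and manufacture smallness by measuring $\Delta$ in $C^{\gamma-\epsilon}$, gaining the factor $m(\tau)^\epsilon$; this forces the extra bootstrap step at the end to recover the exponent $\gamma$, which the paper avoids by working at exponent $\gamma$ throughout. Both devices close the estimate; the paper's is marginally shorter, while yours has the advantage of using Proposition \ref{prop:stability} purely as a black box rather than reopening its proof to extract the remainder term. The boundary-touching rectangles arising in the $(1,0)$ and $(0,1)$ semi-norm estimates, which you flag explicitly, are treated at the same (low) level of detail in the paper — both arguments defer to the iteration and the scaling property of \cite[Proposition 24]{Harang2020} — so this is not a gap relative to the reference proof.
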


	\begin{proof}
		
		First observe that the difference $\theta-\tilde{\theta}$ is given by 
		\begin{equation*}
			\theta_t-\tilde{\theta}_t=\xi_t-\tilde{\xi}_t+\int_0^t A(\dd s,\theta_s)-\int_0^t\tilde{A}(\dd s,\tilde{\theta}_s). 
		\end{equation*}
		Using the exact same techniques as for proving the stability result for the non-linear Young integral in \eqref{eq:bound diff NLY integrals}, we see that 
		\begin{multline*}
			|\int_s^t A(\dd r,\theta_r)-\int_s^t\tilde{A}(\dd r,\tilde{\theta}_r) - (\square_{s,t}A(\theta_s)-\square_{s,t}\tilde{A}(\tilde{\theta}_s))|
			\\
			\leq (C_1\|A-\tilde{A}\|_{C^\gamma_t C^{2+\eta}_x}+C_2(|\theta_s-\tilde{\theta_s}|+[\theta-\tilde{\theta}]_\beta))|t-s|^{\eta \gamma} m(t-s)^\gamma. 
		\end{multline*}
		Here $C_1$ and $C_2$ are given as in \eqref{eq:C const}, and by definition of $M$ we see that 
		\begin{equation*}
			C_1\vee C_2\leq K(M), 
		\end{equation*}
		for some monotone increasing function $K$. 
		Furthermore, using that $\square_{s,t}\xi=\square_{s,t}\tilde{\xi}=0$, we see from the same inequality that the following bound also holds
		\begin{multline*}
			|\square_{s,t}(\theta-\tilde{\theta})- (\square_{s,t}A(\theta_s)-\square_{s,t}\tilde{A}(\tilde{\theta}_s))|
			\\
			\leq K(M) (\|A-\tilde{A}\|_{C^\gamma_t C^{2+\eta}_x}+|\theta_s-\tilde{\theta_s}|+[\theta-\tilde{\theta}]_{\beta;[s,t]})|t-s|^{\eta \gamma} m(t-s)^\gamma.
		\end{multline*}
		For some $\tau\in (0,T)^2$ consider any interval $[\rho,\rho+\tau]$ such that $[\rho,\rho+\tau]\subset [0,T]^2.$ It follows that for $\beta<\gamma$ we have 
		\begin{equation*}
			[\theta-\tilde{\theta}-(\square_{\rho,\cdot }A(\theta_\rho)-\square_{\rho,\cdot}\tilde{A}(\tilde{\theta}_\rho))]_{(1,1),\gamma;[\rho,\rho+\tau]}\leq K(M) |\tau|^{\eta \gamma} (\|A-\tilde{A}\|_{C^\gamma_t C^{2+\eta}_x}+|\theta_\rho-\tilde{\theta}_\rho|+[\theta-\tilde{\theta}]_{\gamma;[\rho,\rho+\tau]}). 
		\end{equation*}
		By similar computations as above one can also show that 
		\begin{align*}
			[\theta-\tilde{\theta}-(\square_{\rho,\cdot }A(\theta_\rho)-\square_{\rho,\cdot}\tilde{A}(\tilde{\theta}_\rho))]_{(1,0),\gamma;[\rho,\rho+\tau]} &\lesssim_T  \tau_1^{\gamma \eta }K(M) (\|A-\tilde{A}\|_{C^\gamma_t C^{2+\eta}_x}+|\theta_{\rho}-\tilde{\theta}_{\rho}|+[\theta-\tilde{\theta}]_{\gamma;[\rho,\rho+\tau]}),
			\\
			[\theta-\tilde{\theta}-(\square_{\rho,\cdot }A(\theta_\rho)-\square_{\rho,\cdot}\tilde{A}(\tilde{\theta}_\rho))]_{(0,1),\gamma;[\rho,\rho+\tau]} &\lesssim_T\tau_2^{\gamma \eta}K(M) (\|A-\tilde{A}\|_{C^\gamma_t C^{2+\eta}_x}+|\theta_{\rho}-\tilde{\theta}_{\rho}|+[\theta-\tilde{\theta}]_{\gamma;[\rho,\rho+\tau]}). 
		\end{align*}
		Combining these estimates, it follows that there exists a constant $C=C(T,K(M),\gamma \eta)>0$ such that 
		\begin{equation*}
			[\theta-\tilde{\theta}-(\square_{\rho,\cdot }A(\theta_\rho)-\square_{\rho,\cdot}\tilde{A}(\tilde{\theta}_\rho))]_{\gamma;[\rho,\rho+\tau]} \leq C\left(\|A-\tilde{A}\|_{C^\gamma_t C^{2+\eta}_x} +  |\theta_\rho -\tilde{\theta}_{\rho}|+|\tau|^{\gamma \eta} [\theta-\tilde{\theta}]_{\beta;[\rho,\rho+\tau]}\right).
		\end{equation*}
		In particular, choosing $\tau$ small enough such that  
		\begin{equation*}
			|\tau|^{\eta \gamma} \leq  \frac{1}{2C}, 
		\end{equation*} 
		it follows that 
		\begin{equation*}
			[\theta-\tilde{\theta}-(\square_{\rho,\cdot }A(\theta_\rho)-\square_{\rho,\cdot}\tilde{A}(\tilde{\theta}_\rho))]_{\gamma;[\rho,\rho+\tau]} \leq 2 C( |\theta_{\rho}-\tilde{\theta}_{\rho}| + \|A-\tilde{A}\|_{C^\gamma_t C^{2+\eta}_x}). 
		\end{equation*}
		and in particular, reformulating using the triangle inequality, we see that 
		\begin{equation}
			[\theta-\tilde{\theta}]_{\gamma;[\rho,\rho+\tau]} \leq 2 C( |\theta_{\rho}-\tilde{\theta}_{\rho}| + \|A-\tilde{A}\|_{C^\gamma_t C^{2+\eta}_x}).
		\end{equation}
		Note that this inequality holds for any $\rho\in [0,T)^2$ such that $[\rho,\rho+\tau]\subset [0,T]^2$, an in particular for the rectangle $[0,\tau]$. Iterating the inequality obtained on this interval  to any interval $[k\tau, (k+1)\tau]\subset [0,T]^2$, using that the relation that $|x_t|\leq |x_0|+[x]_{\gamma}$, one can show that on any interval $[\rho,\rho+\tau]$
		\begin{equation*}
			[\theta-\tilde{\theta}]_{\gamma;[\rho,\rho+\tau]} \lesssim  \|\xi-\tilde{\xi}\|_\gamma + \|A-\tilde{A}\|_{C^\gamma_t C^{2+\eta}_x}. 
		\end{equation*}
		We therefore conclude by the 2D-H\"older norm scaling property proven in \cite[Proposition 24]{Harang2020}, that \eqref{eq:stability solution} holds. 
	\end{proof}

	\section{Regularization of SDEs on the plane.}\label{sec:regularization SDE on plane}
	Consider the stochastic differential equation formally given by 
	\begin{equation}\label{eq:integral eq}
		x_t =\xi_t +\int_0^t b(x_s)\dd s + w_t, \quad t\in [0,T]^2, 
	\end{equation}
	where $\xi_t=\xi^1_{t_1}1_{t_2=0}+\xi_{t_2}^21_{t_1=0}$ is a function supported on the boundary $\{0\}\times [0,T]\cup [0,T]\times \{0\}$. The integral equation can  therefore be seen to be equipped with the two  boundary conditions $x_{t_1,0}=\xi^1_{t_1}+w_{t_1, 0} $ and  $x_{0,t_2}=\xi^2_{t_2}+w_{0, t_2}$. 
	The goal of this section is to prove well-posedness of this equation, even in the case when $b$ is distributional (in the sense of generalized functions) given that $w$ provides a sufficiently regularizing effect. 
	\begin{rem}
		\label{discussion local time}
		In this article the formulation of the results for regularization by noise will be in terms of the potential  regularity of the local time, similar to the approach in  \cite{Harang2021RegularityOL,HarangPerkowski2020}. Similar type of pathwise regularization by noise results can also be formulated as requirements on the so called averaged field associated with $w$ and the drift $b$, as in \cite{Catellier2016}. On two dimensional domains, the local time and occupation measure related to stochastic fields is a well studied topic, and some types of regularity estimates are well known, see e.g. \cite[Theorem 28.1]{GemanHorowitz1980} where it is established that the $(H_1, \dots, H_N)$-fractional Brownian sheet on $[0,T]^N$ with $H_1=\dots=H_N=H$ has a local time $L^{H}_t$ contained $H^{\frac{N}{2H}-\frac{d}{2}}$ $\PP$-almost surely (see also \cite{ZhangXiao2002}, \cite{Ayache2008} for joint space-time continuity of the local time in this setting). However, such regularity estimates are not sufficient in order to apply non-linear Young theory, as one still misses proofs of higher quantified joint space time regularity of these occupations measures (in particular, estimates that provide at the same time  a quantified Young regularity in time and higher spatial regularity appear lacking. While \cite{ZhangXiao2002}, \cite{Ayache2008} do provide some Hölder regularity in time for fixed space points respectively Hölder regularity in space for fixed time points, estimates that establish jointly Hölder continuity in time on a Bessel potential scale of high order as in space as in Theorem \ref{regularity fBS} appear to be new). 
	\end{rem}

	Suppose now that  $w:[0,T]^2\rightarrow \RR^d$ is a stochastic field. Let $\mu^w$ denote the occupation measure of $w$, defined as follows for a Borel set $A\in \cB(\RR^d)$,
	\begin{equation*}
		\mu^w_t(A) = \lambda\{s<t\in [0,T]^2|\, w_s\in A\},
	\end{equation*}
	where $\lambda$ is the Lebesgue measure. 
	If the occupation measure is absolutely continuous with respect to the Lebesgue measure on $\mathbb{R}^d$ it admits a density, $L^w$, i.e. 
	\begin{equation*}
		\mu^w_t(A)=\int_A L^w_t(z)\dd z,\quad t\in [0,T]^2. 
	\end{equation*}
	The function $L^w:[0,T]^2 \times \RR^d \rightarrow \RR_+$ is called the local time associated to $w$. 
	Given a bounded measurable function  $b:\RR^d\rightarrow \RR^d$ and $x\in \RR^d$,  the following local time formula holds 
	\begin{equation}\label{eq:local time formula}
		\int_s^t b(x+w_r)\dd r= \int_{\RR^d}b(x-z)\square_{s,t} L^{-w}(z)\dd z = (b\ast \square_{s,t}L^{-w})(x). 
	\end{equation}
	
	\begin{rem}
		For the reader familiar with the concept of averaged fields in the pathwise regularization by noise approach developed by Catellier, Galeati and Gubinelli in \cite{Catellier2016,galeati2020noiseless,galeati2020prevalence}, the left hand side of the above equation could be seen as a two dimensional extension of the averaged field associated to $b$ and $w$. That is, 
		\begin{equation*}
			T^{w}b(t,x)=\int_0^t b(x+w_r)\dd r,\quad t\in [0,T]^2,
		\end{equation*}
		where we stress that the integral above is a double integral, and from convention $\dd r=\dd r_2 \dd r_1$. 
	\end{rem}

	With the aim of proving well-posedness of \eqref{eq:integral eq} in the case when $b$ is truly distributional, we need to make sense of the integral appearing in \eqref{eq:integral eq}. Similarly to the (1D) pathwise regularization approach, we will construct this integral in terms of a non-linear Young integral. That is, let us first reformulate \eqref{eq:integral eq} by setting $\theta=x-w$. Then  formally $\theta$ solves the equation  
	\begin{equation}\label{eq:transformed}
		\theta_t=\xi_t+\int_0^t b(\theta_s+w_s)\dd s. 
	\end{equation}
	In the case when $b$ is continuous the integral appearing above can be constructed in the Riemann sense; for a sequence of partitions of $\{\cP^n\}$ of  $[0,t_1]\times [0,t_2]$ (constructed as in Definition \ref{def: partition}) with mesh going to zero when $n$ tends to infinity, we have that 
	\begin{equation}\label{eq:Riemann}
		\int_0^t b(\theta_s+w_s)\dd s=\lim_{n\rightarrow\infty}\sum_{[u_1,v_1]\times [u_2,v_2]\in \cP^n} b(\theta_u+w_u)\square_{u,v} id,  
	\end{equation}
	where $\square_{u,v} id :=\int_{u_1}^{v_1}\int_{u_2}^{v_2}\dd r_2\dd r_1$. 
	An alternative approach to constructing this integral is in terms of the non-linear Young integral. Suppose the local time associated with $w$ is differentiable in its spatial variable and (2D) $\gamma$-H\"older continuous in the time variable with $\gamma>\frac{1}{2}$. Then we can use Proposition \ref{prop. 2d NLY integral} to show that the following integral exists: 
	\begin{equation}\label{eq:conv NLY int}
		\int_0^t (b\ast L^{-w})(\dd s, \theta_s):=\lim_{n\rightarrow\infty}\sum_{[u_1,v_1]\times [u_2,v_2]\in \cP^n} (b\ast \square_{u,v} L^{-w})(\theta_u). 
	\end{equation}
	In the next proposition we prove that the above integral indeed exists in the non-linear Young sense, and that it agrees with the classical Riemann integral in the case of continuous functions $b$. 
	
	\begin{prop}\label{prop:conv NLY integral}
		Consider $p,q\in [1,\infty]$ such that $\frac{1}{p}+\frac{1}{q}=1$.  Let $b\in B^\zeta_{p,p}(\RR^d)$ for some $\zeta\in \RR$ and assume $w:[0, T]^2\to \RR^d$ to be continuous such that $L^{-w}\in C^\gamma_tB^\kappa_{q,q}(\RR^d)$, with $\gamma\in (\frac{1}{2},1]^2$ and $\zeta+\kappa>1+\eta$ for some $\eta\in (0,1)$. Suppose $\theta\in C^\alpha_t$ for $\alpha \in (0,1)^2$ such that $\alpha \eta+\gamma>1$.  Then  the nonlinear Young integral defined in \eqref{eq:conv NLY int} exists. Furthermore, if $b$ is continuous, then this integral agrees with the classical Riemann integral.  
	\end{prop}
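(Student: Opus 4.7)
The plan is to apply Proposition \ref{prop. 2d NLY integral} with the non-linear function $A: [0,T]^2 \times \RR^d \to \RR^d$ defined by $A(t,x) := (b \ast L^{-w}_t)(x)$. To do so, I need to verify that $A \in C^\gamma_t C^{1+\eta}_x$, and then match the parameter conditions.

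First I would establish the spatial regularity. Young's convolution inequality in Besov spaces gives, for $\frac{1}{p}+\frac{1}{q}=1$,
\[
\|b \ast g\|_{B^{\zeta+\kappa}_{\infty,\infty}} \lesssim \|b\|_{B^\zeta_{p,p}} \|g\|_{B^\kappa_{q,q}}.
\]
Since $\zeta + \kappa > 1+\eta$ and $1+\eta \notin \NN$, the embedding $B^{\zeta+\kappa}_{\infty,\infty} \hookrightarrow C^{1+\eta}_b$ yields, pointwise in $t$, the bound $\|A(t,\cdot)\|_{C^{1+\eta}_x} \lesssim \|b\|_{B^\zeta_{p,p}} \|L^{-w}_t\|_{B^\kappa_{q,q}}$.

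Next, for the time regularity in the 2D Hölder sense of Definition \ref{eq: 2D Holder space}, I would exploit that convolution with $b$ is a bounded linear operator from $B^\kappa_{q,q}$ into $C^{1+\eta}_b$. Since all three seminorms in \eqref{eq:holder semi 2d} are built from linear combinations of values of $L^{-w}$ at the corners of a rectangle (or of a segment on the boundary), linearity yields, for any of the three types of increment $\Delta$,
\[
\|\Delta A\|_{C^{1+\eta}_x} = \|b \ast \Delta L^{-w}\|_{C^{1+\eta}_x} \lesssim \|b\|_{B^\zeta_{p,p}} \|\Delta L^{-w}\|_{B^\kappa_{q,q}},
\]
so $A \in C^\gamma_t C^{1+\eta}_x$ with $\|A\|_{C^\gamma_t C^{1+\eta}_x} \lesssim \|b\|_{B^\zeta_{p,p}} \|L^{-w}\|_{C^\gamma_t B^\kappa_{q,q}}$. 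Since by hypothesis $\gamma \in (\frac{1}{2},1]^2$ and $\alpha_i \eta + \gamma_i > 1$ for $i=1,2$, Proposition \ref{prop. 2d NLY integral} applies and produces the non-linear Young integral $\int_0^t (b \ast L^{-w})(\dd s,\theta_s)$ as the limit of the Riemann-type sums in \eqref{eq:conv NLY int}.

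Finally, for the consistency with the classical Riemann integral when $b$ is continuous, I would invoke Lemma \ref{consistency 2d nonlinear young}. The local time formula \eqref{eq:local time formula} gives the explicit representation
\[
A(t,x) = (b \ast L^{-w}_t)(x) = \int_0^{t_1}\int_0^{t_2} b(x + w_{r_1,r_2}) \dd r_2 \dd r_1,
\]
and since $b$ and $w$ are continuous, the mixed partial $\partial_{t_1}\partial_{t_2} A(t,x) = b(x + w_t)$ exists and is jointly continuous. Lemma \ref{consistency 2d nonlinear young} then yields $\int_0^t A(\dd s,\theta_s) = \int_0^t b(\theta_s + w_s) \dd s$, which is the classical Riemann integral appearing in \eqref{eq:Riemann}.

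The main obstacle is the verification that $A \in C^\gamma_t C^{1+\eta}_x$ in the precise 2D Hölder sense required by Proposition \ref{prop. 2d NLY integral} (all three seminorms, including the rectangular one). This is not difficult in itself but has to be argued carefully via the linearity of convolution in the argument $L^{-w}$; once this structural observation is made, Young's convolution inequality packages everything cleanly. The identification with the Riemann integral is then automatic from Lemma \ref{consistency 2d nonlinear young}.
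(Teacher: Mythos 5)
Your proposal is correct and follows essentially the same route as the paper: both verify $A(t,x)=(b\ast L^{-w}_t)(x)$ lies in $C^\gamma_t C^{1+\eta}_x$ via Young's convolution inequality applied to the increments of $L^{-w}$ (the paper works in $C^{\kappa+\zeta}_x$ and uses $\kappa+\zeta>1+\eta$, which is the same embedding you invoke), then apply Proposition \ref{prop. 2d NLY integral}, and finally obtain consistency with the Riemann integral from Lemma \ref{consistency 2d nonlinear young} after noting $\partial_{t_1}\partial_{t_2}A(t,x)=b(x+w_t)$ is continuous. Your extra care in tracking all three 2D H\"older seminorms through the linearity of convolution is a slightly more explicit rendering of what the paper compresses into a single estimate on $\square_{s,t}A$.
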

	\begin{proof}
		Set $\square_{s,t}A(x):=(b\ast \square_{s,t}L^{-w})(x)=\int_s^tb(x+w_r)\dd r$. By Young's convolution inequality (see e.g. \cite{BahCheDan}), it follows that 
		\begin{equation}\label{eq:Young conv ineq}
			\|\square_{s,t}A\|_{C^{\kappa+\zeta}_x}\lesssim \|b\|_{B^\zeta_{p,p}(\RR^d)}\|L^{-w}\|_{C^\gamma_tB^\kappa_{q,q}(\RR^d)}m(t-s)^\gamma. 
		\end{equation}
		This implies that $A\in C^\gamma_t C^{\kappa+\zeta}_x$. Furthermore, $t\mapsto \square_{0,t}L^w$ is clearly $0$ on the boundary, as illustrated in Remark \ref{rem: decomp of 2d functions}. Thus,  since $\kappa+\zeta>1+\eta$ and $\theta\in C^\alpha_t$ with $\eta\alpha +\gamma>1$ it then follows from Proposition \ref{prop. 2d NLY integral} that the integral 
		\begin{equation*}
			\int_0^t (b\ast L^{-w})(\dd s, \theta_s)=\int_0^t A(\dd s,\theta_s),
		\end{equation*}
		exists in the sense of \eqref{eq:2dnly}.

		Suppose now that $b$ is continuous so that the Riemann integral in \eqref{eq:Riemann} exists. Remark that then $A_t(x)=\int_0^t b(x+w_r)\dd r$, i.e. $\partial_{t_1}\partial_{t_2}A_s(x)=b(x+w_s)$ is continuous. Hence, by Lemma \ref{consistency 2d nonlinear young}, the 2d-nonlinear Young integral constructed coincides with the corresponding Riemann integral.
	\end{proof}

	Now that the non-linear Young integral for the convolution with a function $b$ and the local time is well defined, we will move on to prove existence and uniqueness of solutions to \eqref{eq:integral eq}. 
	However, as we are interested in allowing for distributional $b$, we need a rigorous concept of solution which behaves well under approximation. This is provided in the following definition. 
	
	\begin{defn}\label{def: concept of solution}
		Let $w:[0,T]^2\rightarrow \RR^d$  be a continuous field, and $b\in \cS(\RR^d)'$ (the space of Schwartz distributions). Assume that $b\ast L^{-w}\in C^{\gamma}_tC^{2+\eta}_x$ for some $\gamma\in (\frac{1}{2},1]^2$ and for some $\eta\in (0,1)$ such that   $(1+\eta)\gamma>1$. We say that $x\in C([0,T]^2; \RR^d)$ is a solution to \eqref{eq:integral eq} if there exists a $\theta\in C^\gamma([0,T]^2;\RR^d)$, such that $x=w+\theta$, and $\theta$  satisfies 
		\begin{equation}\label{eq:NLY regularized}
			\theta_t=\xi_t+\int_0^t (b\ast L^{-w})(\dd s,\theta_s), \quad t\in [0,T].
		\end{equation}
		Here, $\xi_t=\xi^1_{t_1}1_{t_2=0}+\xi_{t_2}^21_{t_1=0}$ and $\xi\in C^\beta_t$ for some $\beta\geq \gamma$, the integral is understood as a non-linear Young integral as defined in Proposition \ref{prop:conv NLY integral} and the equation is interpreted in the non-linear Young sense (see Theorem \ref{thm:nly equation}). We call $\xi$ boundary data of $\theta$. The boundary data of $x$ is accordingly $\xi+w$. 
	\end{defn}
	The next result provides simple conditions for the existence and uniqueness of solutions in terms of the regularity of the (possibly distributional) coefficient $b$ and the regularity of the local time associated to the continuous field $w$. 
	
	\begin{thm}\label{thm:existence and uniqueness}
		Let $p,q\in [1,\infty]$ be such that $\frac{1}{p}+\frac{1}{q}=1$. Assume $b\in B^\zeta_{p,p}(\RR^d)$ for $\zeta \in \RR$ and that $w\in C([0,T]^2;\RR^d)$ has an associated local time $L^{-w}\in C^\gamma_t B^\alpha_{q,q}(\RR^d)$ for some $\gamma\in (\frac{1}{2},1]^2$ and $\alpha \in (0,1)^2$. If  there exists an   $\eta\in (0,1)$  such that 
		\begin{equation}\label{eq:conditions for existence.}
			\gamma(1+\eta)>1\quad  and \quad \zeta+\alpha>2+\eta, 
		\end{equation}
		then there exists a unique solution $x\in C([0,T]^2;\RR^d)$ to equation \ref{eq:integral eq}, where the solution is given in the sense of Definition  \ref{def: concept of solution}. 
	\end{thm}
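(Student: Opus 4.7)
The plan is to reduce the claim to Theorem \ref{thm:nly equation} by recasting the regularized equation \eqref{eq:NLY regularized} as a 2D non-linear Young equation with a suitable non-linear drift $A$ constructed via convolution of $b$ with the local time $L^{-w}$. Concretely, I would set
\[
A_t(x) := (b*L^{-w}_t)(x), \qquad t\in [0,T]^2,\ x\in \RR^d,
\]
so that the equation for $\theta = x - w$ takes the form $\theta_t = \xi_t + \int_0^t A(\dd s,\theta_s)$ in the 2D non-linear Young sense.

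First I would verify that $A$ has the right regularity to feed into the existence/uniqueness result for 2D non-linear Young equations. By Young's convolution inequality in Besov spaces applied pointwise in time (using $1/p + 1/q = 1$ and the embedding $B^{\zeta+\alpha}_{\infty,\infty} \hookrightarrow C^{\zeta+\alpha}_b$ for $\zeta+\alpha$ a positive non-integer), together with the identity $\square_{s,t} A(x) = (b*\square_{s,t}L^{-w})(x)$, one obtains
\[
\|A\|_{C^\gamma_t C^{\zeta+\alpha}_x} \lesssim \|b\|_{B^\zeta_{p,p}} \|L^{-w}\|_{C^\gamma_t B^\alpha_{q,q}}.
\]
Since by assumption $\zeta + \alpha > 2 + \eta$, this yields $A \in C^\gamma_t C^{2+\eta}_x$. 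Next, the boundary data $\xi_t = \xi^1_{t_1}\indic_{t_2=0} + \xi^2_{t_2}\indic_{t_1=0}$ is by construction supported on the axes, hence $\square_{s,t}\xi = 0$ for every $s<t\in [0,T]^2$, so the hypotheses of Theorem \ref{thm:nly equation} are satisfied (with the condition $(1+\eta)\gamma>1$ coming directly from \eqref{eq:conditions for existence.}).

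Having verified these assumptions, Theorem \ref{thm:nly equation} produces a unique $\theta \in C^\gamma_t$ solving
\[
\theta_t = \xi_t + \int_0^t A(\dd s,\theta_s),
\]
where the integral is the 2D non-linear Young integral of Proposition \ref{prop. 2d NLY integral}. By Proposition \ref{prop:conv NLY integral} this integral coincides with $\int_0^t (b*L^{-w})(\dd s,\theta_s)$, so setting $x := w + \theta$ produces a solution to \eqref{eq:integral eq} in the sense of Definition \ref{def: concept of solution}, with $x\in C([0,T]^2;\RR^d)$ since both $w$ and $\theta$ are continuous. For uniqueness, any other solution $\tilde{x}$ in the sense of Definition \ref{def: concept of solution} corresponds to a $\tilde\theta := \tilde x - w \in C^\gamma_t$ solving the same non-linear Young equation with the same boundary data, and uniqueness in Theorem \ref{thm:nly equation} forces $\tilde\theta = \theta$, hence $\tilde x = x$.

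Since all the heavy lifting (the 2D sewing lemma, existence of the non-linear Young integral, and existence/uniqueness for 2D non-linear Young equations) has already been carried out earlier in the paper, there is essentially no genuine obstacle; the only delicate point is matching the Besov regularity exponents under Young's convolution inequality to the $C^{2+\eta}$ requirement of Theorem \ref{thm:nly equation}, which is precisely what the hypothesis $\zeta + \alpha > 2 + \eta$ secures.
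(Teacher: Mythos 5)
Your proposal is correct and follows essentially the same route as the paper: the paper's own proof likewise invokes Proposition \ref{prop:conv NLY integral} (whose proof is exactly the Young convolution estimate $\|\square_{s,t}A\|_{C^{\zeta+\alpha}_x}\lesssim \|b\|_{B^\zeta_{p,p}}\|L^{-w}\|_{C^\gamma_t B^\alpha_{q,q}}m(t-s)^\gamma$ you spell out) to define the integral, and then applies Theorem \ref{thm:nly equation} to get a unique $\theta\in C^\gamma_t$, setting $x=w+\theta$. Your write-up is merely a more explicit version of the same argument, including the observation that uniqueness in Definition \ref{def: concept of solution} reduces to uniqueness of $\theta$.
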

	
	\begin{proof}
		We know that under the conditions in \eqref{eq:conditions for existence.} the non-linear Young integral in \eqref{eq:NLY regularized} is well defined according to Proposition \ref{prop:conv NLY integral}, and it follows by Theorem \ref{thm:nly equation} that a unique solution to \eqref{eq:NLY regularized} exists in $C^\gamma_t$. Setting $x=w+\theta$ it follows that $x\in C([0,T]^2;\RR^d)$, and that $x$ is a solution in the sense of Definition \ref{def: concept of solution}. 
	\end{proof}
	\begin{rem}
		Note that in the case when $b$ is a continuous function and the conditions of Theorem \ref{thm:existence and uniqueness} is satisfied, then the solution coincides with the classical one. Indeed, in this case, the simple  transformation \eqref{eq:integral eq} given in \eqref{eq:transformed} holds, and since the non-linear Young integral agrees with the Riemann integral, in this case, the two concepts of solution also agree. 
	\end{rem}
	
	With the stability result from Proposition \ref{stability of solutions} it also follows that smooth approximations of a solution converge to the solution of the non-linear Young equation. 
	\begin{cor}\label{cor. stability}
		Suppose $b$, $w$ and $L^{-w}$ satisfies the conditions of Theorem \ref{thm:existence and uniqueness} such that a unique solution $x$  to \eqref{eq:integral eq} with boundary data $\xi+w$ exists in the sense of Definition \ref{def: concept of solution}. Let $\{b^n\}_{n\in \NN}$ be a sequence of smooth functions approximating $b$ such that $\lim_{n\rightarrow \infty} \|b^n-b\|_{B^\zeta_{p,p}(\RR^d)}=0$. Denote by $\{x^n\}_{n \in \NN}\subset C([0,T]^2;\RR^d)$ the sequence of solutions with boundary data $\xi+w$ constructed from the sequence of  solutions to \eqref{eq:integral eq} where the drift of $x^n$ is given by $b^n$. Then $x^n\rightarrow x$ in $C([0,T]^2;\RR^d)$, and we have that for any $\beta\in (0,\gamma)$ and any $n\in \NN$
		\begin{equation*}
			\|x^n-x\|_{\infty} \leq C\|b^n-b\|_{B^\zeta_{p,p}(\RR^d)}. 
		\end{equation*}
	\end{cor}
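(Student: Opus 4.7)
The plan is to reduce the corollary to an application of the stability result Proposition \ref{stability of solutions} combined with Young's convolution inequality. Writing $x=w+\theta$ and $x^n = w+\theta^n$ according to Definition \ref{def: concept of solution}, I observe that $x-x^n=\theta-\theta^n$, where both $\theta$ and $\theta^n$ are solutions to the non-linear Young equation \eqref{eq:2d nly eq} driven by the non-linearities $A:=b\ast L^{-w}$ and $A^n:=b^n\ast L^{-w}$ respectively, both with the same boundary data $\xi$. Hence, it suffices to bound $\|\theta-\theta^n\|_\infty$ in terms of $\|b-b^n\|_{B^\zeta_{p,p}}$.

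First I would verify the uniform bound $M$ required in Proposition \ref{stability of solutions}. The Young convolution inequality in Besov spaces (exactly as used in \eqref{eq:Young conv ineq}) together with the hypothesis $\zeta+\alpha\ge 2+\eta$ gives
\begin{equation*}
\|A\|_{C^\gamma_tC^{2+\eta}_x}\vee \|A^n\|_{C^\gamma_tC^{2+\eta}_x}\lesssim (\|b\|_{B^\zeta_{p,p}}\vee \|b^n\|_{B^\zeta_{p,p}})\,\|L^{-w}\|_{C^\gamma_tB^\alpha_{q,q}},
\end{equation*}
and since $b^n\to b$ in $B^\zeta_{p,p}$, the right-hand side is uniformly bounded in $n$. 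A uniform bound on $\|\theta\|_\gamma$ and $\|\theta^n\|_\gamma$ then follows from inspecting the Picard fixed-point construction in the proof of Theorem \ref{thm:nly equation}: the size of the local interval $\bar\tau$ only depends on $\|A^n\|_{C^\gamma_tC^{1+\eta}_x}$ and on the boundary regularity $N_\xi$, both of which are uniformly controlled in $n$; iterating this bound over $[0,T]^2$ yields a uniform bound on $\|\theta^n\|_\gamma$.

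With $M$ fixed, Proposition \ref{stability of solutions} applied with $\tilde\xi=\xi$ yields
\begin{equation*}
[\theta-\theta^n]_\gamma \leq C(M,\gamma,T)\,\|A-A^n\|_{C^\gamma_t C^{2+\eta}_x} \leq C'\|b-b^n\|_{B^\zeta_{p,p}},
\end{equation*}
where for the second inequality I use again Young's convolution inequality applied to $A-A^n=(b-b^n)\ast L^{-w}$. To convert this into a bound on the sup norm, I exploit that $\theta$ and $\theta^n$ agree on the entire boundary $\partial[0,T]^2$ (both equal $\xi$ there), so the decomposition of Remark \ref{rem: decomp of 2d functions} gives $\theta_t-\theta^n_t = \square_{0,t}(\theta-\theta^n)$ for every $t\in[0,T]^2$. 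Thus
\begin{equation*}
\|x-x^n\|_\infty = \|\theta-\theta^n\|_\infty \leq [\theta-\theta^n]_{(1,1),\gamma}\, T^{\gamma_1+\gamma_2} \lesssim_T \|b-b^n\|_{B^\zeta_{p,p}},
\end{equation*}
which is exactly the claimed estimate and in particular yields $x^n\to x$ in $C([0,T]^2;\RR^d)$.

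The main obstacle I anticipate is the verification of the uniform bound on $\|\theta^n\|_\gamma$. While it is morally clear from the proof of Theorem \ref{thm:nly equation} that the local existence time $\bar\tau$ only depends on $\|A^n\|_{C^\gamma_tC^{1+\eta}_x}$ and on $N_\xi$, and hence can be chosen uniformly in $n$, the iteration argument across the grid of small rectangles requires one to check that the constants propagating through the successive steps remain controlled; this is implicit in the construction borrowed from \cite[Thm.\ 25]{Harang2020} but should be spelled out to justify that $M$ can indeed be chosen independently of $n$.
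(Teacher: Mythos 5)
Your proposal is correct and follows essentially the same route as the paper, whose proof is a one-line appeal to Proposition \ref{stability of solutions}, the bound $\|x^n-x\|_\infty\le |x^n_0-x_0|+[x^n-x]_\beta$, and the non-linear Young interpretation of the solutions. You simply spell out the details the paper leaves implicit — the Young convolution estimate $\|A-A^n\|_{C^\gamma_t C^{2+\eta}_x}\lesssim \|b-b^n\|_{B^\zeta_{p,p}}$, the uniform-in-$n$ constant $M$ from the Picard construction, and the passage from the $(1,1)$-H\"older seminorm to the sup norm via the common boundary data — and your flagged concern about the uniformity of $M$ is a fair observation about a step the paper treats as immediate.
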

	\begin{proof}
		This follows directly from Proposition \ref{stability of solutions}, the fact that $\|x^n-x\|_{\infty}\leq |x_0^n-x_0|+ [x^n-x]_\beta$, together with the interpretation of the solutions in the non-linear Young sense, as illustrated in Proposition \ref{prop:conv NLY integral} and Theorem \ref{thm:existence and uniqueness}. 
	\end{proof}

	\subsection{Regularity of the local time of the fractional Brownian sheet}
	While the above theorem provides explicit conditions for the existence and uniqueness of solutions to the equation in terms of regularity of the local time associated with the continuous field $w$, it does not provide any further conditions on the field $w$ to guarantee that the local time indeed has the assumed regularity. The above theorem is therefore abstract in itself, and one needs to study space-time regularity properties of local times associated with various continuous (stochastic) fields in order to get concrete conditions on the field $w$ and $b$ to guarantee existence and uniqueness. In the following , we derive joint space-time regularity estimates for the local time associated with two types of noises: the fractional Brownian sheet and sums of independent fractional Brownian motions in distinct variables. The following theorem can be considered an extension of \cite[Theorem 3.1]{HarangPerkowski2020} to the two dimensional setting.
	\begin{thm}
		\label{regularity fBS}
		Let $w:[0,T]^2\to \RR^d$ be a fractional Brownian sheet of Hurst parameter $H=(H_1, H_2)$ on $(\Omega, \mathcal{F}, \mathbb{P})$. Suppose that 
		\[
		\lambda<\frac{1}{2(H_1\vee H_2)}-\frac{d}{2}
		\]
		Then for almost every $\omega\in \Omega$, $w$ admits a local time $L$ such that for $\gamma_1\in (1/2, 1-(\lambda+\frac{d}{2})H_1)$ and $\gamma_2\in (1/2, 1-(\lambda+\frac{d}{2})H_2)$ 
		\[
		\norm{\square_{s,t} L}_{H^\lambda_x}\lesssim (t_1-s_1)^{\gamma_1}(t_2-s_2)^{\gamma_2}
		\]
	\end{thm}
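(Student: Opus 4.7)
The plan is to adapt the Fourier-analytic approach of \cite[Theorem 3.1]{HarangPerkowski2020}, which treats the analogous one-dimensional problem for fractional Brownian motion, to the two-dimensional rectangular-increment setting of the fractional Brownian sheet. I would first introduce the mollified local time $L^\epsilon_t(x) := \int_0^{t_1}\int_0^{t_2} \rho_\epsilon(x - W(r_1, r_2))\dd r_2\dd r_1$ with $\rho_\epsilon$ a smooth approximation of $\delta_0$, whose spatial Fourier transform factorises as
$$\widehat{\square_{s,t}L^\epsilon}(\xi) = \hat\rho_\epsilon(\xi)\int_{s_1}^{t_1}\int_{s_2}^{t_2} e^{-i\xi \cdot W(r_1, r_2)}\dd r_2\dd r_1.$$
All subsequent estimates will be uniform in $\epsilon$, and $L$ will be recovered as the almost-sure limit as $\epsilon\to 0$. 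The goal is the $\epsilon$-uniform moment estimate
$$\EE\|\square_{s,t} L^\epsilon\|_{H^\lambda}^{2n} \lesssim (t_1 - s_1)^{2n\gamma_1'}(t_2 - s_2)^{2n\gamma_2'}$$
for some integer $n$ large and some $\gamma_i' \in (\gamma_i, 1-(\lambda+d/2)H_i)$; a two-parameter Kolmogorov--Chentsov criterion applied to rectangular increments (noting that $L$ vanishes on the axes so no extra boundary regularity is needed) then yields the claimed regularity. The lower bound $\gamma_i > 1/2$ is exactly what ensures $2n\gamma_i' > 1$, as required by the criterion.

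To derive the moment bound, apply Minkowski's integral inequality in Fourier space together with the standard Gaussian identity $\EE[e^{i\xi\cdot Y}] = e^{-\frac{1}{2}\mathrm{Var}(\xi\cdot Y)}$ to obtain
$$\EE\|\square_{s,t} L^\epsilon\|_{H^\lambda}^{2n} \leq \left(\int_{\RR^d} (1+|\xi|^2)^{\lambda} \bigl(\EE|\widehat{\square_{s,t}L^\epsilon}(\xi)|^{2n}\bigr)^{1/n} \dd\xi\right)^n,$$
with the explicit representation
$$\EE|\widehat{\square_{s,t}L^\epsilon}(\xi)|^{2n} = |\hat\rho_\epsilon(\xi)|^{2n}\int_{R^{2n}} \exp\Bigl(-\tfrac{|\xi|^2}{2}V_n(r)\Bigr)\dd r,$$
where $R := [s_1, t_1] \times [s_2, t_2]$ and $V_n(r)$ is the variance of a signed sum of one scalar component of the fractional Brownian sheet evaluated at the $2n$ configuration points $r \in R^{2n}$.

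The main obstacle is to produce a lower bound on $V_n(r)$ that is sharp enough for integration both over $R^{2n}$ and then against $(1+|\xi|^2)^\lambda\dd\xi$. I would appeal to the sectorial local nondeterminism of the fractional Brownian sheet established by Ayache, Wu and Xiao, asserting that the conditional variance of $W(r^{(k)})$ given the remaining $2n-1$ points is bounded below by $c\sum_{j=1,2} \min_{\ell \neq k}|r_j^{(k)} - r_j^{(\ell)}|^{2H_j}$ on subregions bounded away from the axes. An iterative conditioning argument combined with partitioning $R^{2n}$ according to simultaneous orderings of the points in each coordinate separately reduces the integrand to a product of two one-dimensional Gaussian-type integrals of the form $\int_{[s_i, t_i]^{2n}} \exp(-\tfrac{|\xi|^2}{2}\sum_k (r_{(k)} - r_{(k-1)})^{2H_i})\dd r$. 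Each such one-dimensional factor is estimated by rescaling $u = |\xi|^{1/H_i} r$ and interpolating the resulting bound $\lesssim |\xi|^{-2n/H_i}$ with the trivial bound $(t_i - s_i)^{2n}$, yielding an estimate of the form $\lesssim (t_i - s_i)^{2n\gamma_i'}|\xi|^{-2n(1-\gamma_i')/H_i}$. Feeding this back and splitting the $\xi$-integral into low- and high-frequency regions (using the trivial bound $|R|^{2n}$ for low frequencies), the resulting integral is finite precisely when $(1-\gamma_i')/H_i > \lambda + d/2$ for $i=1,2$, equivalently when $\gamma_i' < 1-(\lambda+d/2)H_i$ under the hypothesis $\lambda < \frac{1}{2(H_1\vee H_2)} - \frac{d}{2}$. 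Taking $n$ large enough that $\gamma_i' > \gamma_i + 1/(2n)$ and passing to the limit $\epsilon\to 0$ then completes the argument. A subtlety near the degenerate boundary of $[0,T]^2$ (where sectorial LND deteriorates) is expected to be handled by the vanishing of $W$ itself along the axes, which offsets the loss in the variance lower bound.
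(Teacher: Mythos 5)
Your proposal is correct in outline but takes a genuinely different route from the paper. The paper does not estimate moments of $\widehat{\square_{s,t}\mu}$ via local nondeterminism at all: it uses the one-parameter bound $\norm{\int_{s_1}^{t_1}e^{i\alpha z\cdot B^{H_1}_r}\dd r}_{L^m(\Omega)}\lesssim (1+|z|^2)^{-\lambda'/2}|\alpha|^{-\lambda'}(t_1-s_1)^{1-\lambda'H_1}$ from \cite[Theorem 3.1]{HarangPerkowski2020} (proved there by stochastic sewing) as a black box, observes that for fixed $r_2$ the slice $r_1\mapsto w_{r_1,r_2}$ has the law of $r_2^{H_2}B^{H_1}_{r_1}$, and applies Minkowski's inequality in the $r_2$-variable; the resulting factor $|\alpha|^{-\lambda'}=r_2^{-H_2\lambda'}$ is integrable and yields the second time exponent $(t_2-s_2)^{1-\lambda'H_2}$. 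The conclusion is then reached exactly as you propose, via Minkowski in the Fourier variable and the multiparameter Kolmogorov/Garsia--Rodemich--Rumsey criterion of Hu and L\^e. This slicing argument is far shorter than your computation and, importantly, it handles the degenerate region near the axes automatically, because the degeneracy of the sheet there is converted into the quantitative, integrable singularity $r_2^{-H_2\lambda'}$.

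Two caveats about your route. First, the near-axis region is the one point where your sketch is not yet an argument: sectorial local nondeterminism for the fractional Brownian sheet is only available on $[\eps,T]^2$ with constants degenerating as $\eps\to 0$, and the assertion that ``the vanishing of $W$ along the axes offsets the loss'' is a hope rather than a proof --- making it quantitative essentially forces you back to the conditional scaling $w_{r_1,r_2}\simeq r_2^{H_2}B^{H_1}_{r_1}$ that the paper exploits, or else to a weaker statement restricted to rectangles away from the axes. Second, the reduction of $\int_{R^{2n}}\exp(-\tfrac{|z|^2}{2}V_n(r))\dd r$ to a product of two one-dimensional integrals via iterated conditioning and simultaneous orderings in both coordinates is precisely where Ayache, Wu and Xiao invest most of their effort; presenting it as a routine step hides the bulk of the work. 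On the other hand, your approach is the one that could in principle see genuinely two-dimensional cancellations: convergence of the $z$-integral really only needs the joint condition $(1-\gamma_1')/H_1+(1-\gamma_2')/H_2>\lambda+\tfrac{d}{2}$ rather than ``precisely'' the two marginal conditions you impose, which is strictly weaker than anything the paper's slicing argument can produce --- an improvement the authors themselves flag as an open direction. As written, both routes deliver the theorem as stated.
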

	\begin{proof}
		We proceed similar to \cite[Theorem 3.1]{HarangPerkowski2020}. Recall that if $\mu$ denotes the occupation measure associated with $w$, we have by the occupation times formula
		\[
		\square_{s,t}\hat{\mu}(z)=\int_s^t e^{izw_r}dr.
		\]
		Recall also that it was shown in \cite[Theorem 3.1]{HarangPerkowski2020} that for a $d$-dimensional fractional Brownian motion $B^{H_1}$ of Hurst parameter $H_1$, one obtains the bound
		\begin{align} 
			\norm{\int_{s_1}^{t_1} e^{i\alpha z\cdot B^{H_1}_r}dr}_{L^m(\Omega)}\lesssim (1+|z|^2)^{-\lambda'/2}|\alpha|^{-\lambda'}(t_1-s_1)^{1-\lambda'H_1}
		\end{align}
		provided $1-\lambda'H_1>1/2$ thanks to the stochastic sewing lemma \cite{le2018}. Moreover, remark that for the Brownian sheet $w_r=w_{r_1, r_2}$, we have that for fixed time points $r_2\in [0,T]$, $w_{r_1, r_2}\simeq r_2^{H_2}B_{r_1}^{H_1}$ in law, where $B^{H_1}$ is a standard fractional Brownian motion of Hurst parameter $H_1$. This implies that 
		\begin{align*}
			\norm{\int_s^te^{izw_r}dr}_{L^m}&\leq \int_{s_2}^{t_2}\norm{\int_{s_1}^{t_1}e^{izw_r}dr_1}_{L^m}dr_2\\
			&=\int_{s_2}^{t_2}\norm{\int_{s_1}^{t_1}e^{izr_2^{H_2}B^{H_1}_{r_1}}dr_1}_{L^m}dr_2\\
			&\lesssim\int_{s_2}^{t_2} (1+|z|^2)^{-\lambda'/2}r_2^{-H_2\lambda'}(t_1-s_1)^{1-\lambda'H_1}dr_2\\
			&\lesssim (1+|z|^2)^{-\lambda'/2}(t_1-s_1)^{1-\lambda'H_1}(t_2-s_2)^{1-\lambda'H_2}
		\end{align*}
		where we need to require $1-\lambda'H_2>1/2$ as to assure $\gamma_2>1/2$. By definition of Bessel-potential spaces, we have therefore obtain by Minkowski's integral inequality 
		\begin{align*}
			\mathbb{E}[\norm{\square_{s,t}\mu}_{H^s}^p]^{1/p}&=\left(\mathbb{E}\left(\int_{\RR^d}\square_{s,t}\hat{\mu}(z)^2(1+|z|^2)^{s}dz \right)^{p/2}\right)^{1/p}\\
			&\leq \left(\int_{\RR^d}\norm{|\square_{s,t}\hat{\mu}(z)|^2}_{L^{p/2}(\Omega)}(1+|z|^2)^{s}dz \right)^{1/2}\\
			&=\left(\int_{\RR^d}\norm{|\square_{s,t}\hat{\mu}(z)}_{L^{p}(\Omega)}^{2}(1+|z|^2)^{s}dz \right)^{1/2}\\
			&\lesssim (t_1-s_1)^{1-\lambda'H_1}(t_2-s_2)^{1-\lambda'H_2}\int_{\RR^d}(1+|z|^2)^{s-\lambda'}dz\\
			&\lesssim (t_1-s_1)^{1-\lambda'H_1}(t_2-s_2)^{1-\lambda'H_2}\int_{0}^\infty(1+|r|^2)^{s-\lambda'}r^{d-1}dr\\
			&\lesssim (t_1-s_1)^{p(1-\lambda'H_1)}(t_2-s_2)^{p(1-\lambda'H_2)}
		\end{align*}
		provided $s<\lambda'-d/2$. We can then conclude by the joint Kolmogorov continuity theorem as expressed in \cite[Theorem 3.1]{HU20133359} to obtain for $\gamma_1<1-\lambda'H_1$ and $\gamma_2<1-\lambda'H_2$
		\[
		\norm{\square_{s,t}\mu}_{H^s}\lesssim (t_1-s_1)^{\gamma_1}(t_2-s_2)^{\gamma_2}
		\]
		It thus follows that for such $\gamma=(\gamma_1,  \gamma_2)$, we may conclude 
		\[
		L\in C^\gamma_tH^s
		\]
		where
		\[
		s<\lambda'-\frac{d}{2}<\frac{1}{2(H_1\vee H_2)}-d/2.
		\]
		
	\end{proof}
	\begin{rem}
		Let us remark that we expect the above Theorem \ref{regularity fBS} to be far from optimal: In particular, no genuinely two dimensional stochastic cancellations have been employed, meaning regularization is not obtained from "both directions", but rather limited by the one with the biggest Hurst parameter i.e. $H_1\vee H_2$. Indeed, note that already the above proof is exploiting self-similarity properties to transfer the one parameter setting to the present two parameter setting. As already in \cite{HarangPerkowski2020}, a crucial role in the regularity estimates for local times was played by the stochastic sewing Lemma, we expect that in our setting, a "2D stochastic sewing Lemma" not yet available in the literature (see however \cite{kern} for a stochastic reconstruction theorem very close in spirit) might prove instrumental in establishing regularization from "both directions". 
	\end{rem}
	Combing Theorem \ref{thm:existence and uniqueness} with the above \ref{regularity fBS}, we immediately obtain:
	\begin{thm}
		Let $w:[0,T]^2\to \RR^d$ be a fractional Brownian sheet of Hurst parameter $H=(H_1, H_2)$ on $(\Omega, \mathcal{F}, \mathbb{P})$. Let $b\in H^\zeta$ for $\zeta\in \RR$. Suppose that 
		\begin{equation}
			\zeta>3-\frac{1}{2(H_1\vee H_2)}+\frac{d}{2},
		\end{equation}
		then for almost all $\omega\in \Omega$ independent of $b$ there exists a unique solution $x\in C([0,T]^2;\RR^d)$ to equation \ref{eq:integral eq}, where the solution is given in the sense of Definition  \ref{def: concept of solution}
	\end{thm}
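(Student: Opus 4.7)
The proof is a direct combination of the two main ingredients stated just above: the local time regularity of the fractional Brownian sheet (Theorem \ref{regularity fBS}) and the abstract regularization result (Theorem \ref{thm:existence and uniqueness}). The plan is to choose the parameters $(\lambda, \gamma, \eta)$ so that the output of the first theorem becomes a valid input for the second.

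Writing $H := H_1 \vee H_2$ and setting $p = q = 2$, the space $H^\zeta$ coincides with the Besov space $B^\zeta_{2,2}(\RR^d)$ required by Theorem \ref{thm:existence and uniqueness}, so the assumption $b \in H^\zeta$ fits directly into the abstract framework. The strict assumption $\zeta > 3 - \frac{1}{2H} + \frac{d}{2}$ provides positive slack: there exist small $\epsilon_1, \epsilon_2 > 0$ such that
\[
\zeta + \Bigl(\tfrac{1}{2H} - \tfrac{d}{2} - \epsilon_1\Bigr) > 3 - \epsilon_2.
\]
Accordingly, I would set $\lambda := \frac{1}{2H} - \frac{d}{2} - \epsilon_1$ and $\eta := 1 - \epsilon_2 \in (0,1)$. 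Then $\lambda < \frac{1}{2(H_1\vee H_2)} - \frac{d}{2}$, so Theorem \ref{regularity fBS} yields almost surely (with the $\PP$-null set independent of $b$) a local time $L \in C^\gamma_t H^\lambda_x$ for any admissible $\gamma_i \in (1/2, 1 - (\lambda + d/2) H_i)$, and also the inequality $\zeta + \lambda > 2 + \eta$ required by Theorem \ref{thm:existence and uniqueness}.

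The only delicate point is to verify that one can simultaneously pick $\gamma_i$ so that $(1+\eta)\gamma_i > 1$, i.e.\ $\gamma_i > \frac{1}{1+\eta} = \frac{1}{2-\epsilon_2}$. Note that $\frac{1}{2-\epsilon_2} = \frac{1}{2} + \frac{\epsilon_2}{4} + O(\epsilon_2^2)$, whereas the upper bound is $1 - (\lambda + \frac{d}{2})H_i = 1 - \frac{H_i}{2H} + \epsilon_1 H_i$, which is at least $\frac{1}{2} + \epsilon_1 H$ (reached when $H_i = H$). Thus the admissible window is non-empty as soon as $\epsilon_2$ is chosen small enough relative to $\epsilon_1$, for instance $\epsilon_2 < 2 \epsilon_1 H$; this is a free choice independent of the previous slack inequality, which only constrains $\epsilon_2$ from below through $\epsilon_2 > \epsilon_1 - (\zeta - 3 + \frac{1}{2H} - \frac{d}{2})$, a lower bound that is negative for $\epsilon_1$ small.

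With these choices of $(\lambda, \gamma, \eta)$ and with $\alpha := \lambda$ (along with $p=q=2$), all hypotheses of Theorem \ref{thm:existence and uniqueness} are satisfied, and the existence and uniqueness of $x \in C([0,T]^2; \RR^d)$ in the sense of Definition \ref{def: concept of solution} follows immediately. The main (minor) obstacle is precisely the balancing of $\epsilon_1$ and $\epsilon_2$ to make the $\gamma_i$-window non-empty; beyond that, the argument is merely a matter of matching notation between the two theorems.
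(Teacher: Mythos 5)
Your proposal is correct and follows exactly the route the paper intends: the paper gives no written proof beyond the remark that the result follows ``immediately'' by combining Theorem \ref{regularity fBS} with Theorem \ref{thm:existence and uniqueness}, and your argument is precisely that combination, with the parameter choices $p=q=2$, $\lambda=\frac{1}{2(H_1\vee H_2)}-\frac{d}{2}-\epsilon_1$, $\eta=1-\epsilon_2$ and the balancing $\epsilon_2<2\epsilon_1(H_1\vee H_2)$ correctly filling in the details the authors left implicit. The verification that the admissible window for $\gamma_i$ is non-empty (the binding case being $H_i=H_1\vee H_2$) is the only point requiring care, and you handle it correctly.
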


	\subsection{Regularity of the local time of the sum of two one-parameter fractional Brownian motions}
	\begin{lem}\label{lem:fbmconv}
		Let $\beta^1$ and $\beta^2$ are two fractional Brownian motions on a probability space $(\Omega,\cF,\PP)$ with Hurst parameters $H_1,H_2 \in (0,1)$. Then, for almost all $\omega\in \Omega$, the local time $L^{-w}$ associated with $w=\beta^1+\beta^2$ is given by the convolution of local times $L^{-\beta^1}$ and $L^{-\beta^2}$, i.e.
		\begin{equation*}
			L^{-w}_t(x)=L_{t_1}^{-\beta^1}\ast L^{-\beta^2}_{t_2} (x), \quad t=(t_1,t_2) \in [0,T]^2.
		\end{equation*}
	\end{lem}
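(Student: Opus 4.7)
The plan is to compute the occupation measure of $w$ (equivalently, of $-w$) directly, exploiting the product structure $w_{t_1,t_2}=\beta^1_{t_1}+\beta^2_{t_2}$ together with Fubini's theorem, and then to invoke the classical fact that each one-parameter fractional Brownian motion admits an (almost surely) absolutely continuous occupation measure. Concretely, I would first fix $\omega$ in a set of full $\PP$-measure on which both $L^{-\beta^1}$ and $L^{-\beta^2}$ exist as densities of the respective occupation measures (this is a standard result, e.g.\ Geman--Horowitz or \cite{GemanHorowitz1980}); this is where the hypothesis $H_i\in(0,1)$ is used.

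Next, I would identify both sides via their characteristic functions. For any $z\in\RR^d$, using the factorization of the integrand and Fubini,
\begin{equation*}
\int_0^t e^{iz\cdot w_r}\dd r
=\int_0^{t_1}\int_0^{t_2} e^{iz\cdot \beta^1_{r_1}}e^{iz\cdot \beta^2_{r_2}}\dd r_2\dd r_1
=\Big(\int_0^{t_1} e^{iz\cdot \beta^1_{r_1}}\dd r_1\Big)\Big(\int_0^{t_2} e^{iz\cdot \beta^2_{r_2}}\dd r_2\Big).
\end{equation*}
The left-hand side equals $\widehat{\mu^{w}_t}(-z)$ while the right-hand side equals $\widehat{\mu^{\beta^1}_{t_1}}(-z)\,\widehat{\mu^{\beta^2}_{t_2}}(-z)=\widehat{\mu^{\beta^1}_{t_1}\ast \mu^{\beta^2}_{t_2}}(-z)$. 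By injectivity of the Fourier transform on finite measures we obtain $\mu^{w}_t=\mu^{\beta^1}_{t_1}\ast\mu^{\beta^2}_{t_2}$, and since each factor is absolutely continuous (by the first step), so is their convolution, with density $L^{w}_t=L^{\beta^1}_{t_1}\ast L^{\beta^2}_{t_2}$.

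Finally, I translate this identity from $w$ to $-w$. The change of variables $z\mapsto -z$ yields $L^{-w}_t(x)=L^{w}_t(-x)$ and similarly $L^{-\beta^i}_{t_i}(x)=L^{\beta^i}_{t_i}(-x)$, and a direct computation
\begin{equation*}
L^{w}_t(-x)=\int_{\RR^d} L^{\beta^1}_{t_1}(-x-y)L^{\beta^2}_{t_2}(y)\dd y
=\int_{\RR^d} L^{-\beta^1}_{t_1}(x-z)L^{-\beta^2}_{t_2}(z)\dd z
\end{equation*}
(obtained via $z=-y$) gives the desired convolution formula $L^{-w}_t(x)=(L^{-\beta^1}_{t_1}\ast L^{-\beta^2}_{t_2})(x)$.

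The proof is essentially a soft argument; the only technical point requiring care is the measurability and integrability needed to apply Fubini in the first display (which is immediate here since $e^{iz\cdot w_r}$ is bounded and jointly measurable) and the verification that the convolution of the two $\PP$-a.s.\ existing densities is the density of the occupation measure of $w$. The main conceptual obstacle, if any, is merely bookkeeping the sign convention relating $L^{w}$ and $L^{-w}$ (so that the identity in the statement matches the local-time formula \eqref{eq:local time formula} used throughout the paper), rather than any deep probabilistic input.
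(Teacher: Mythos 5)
Your proof is correct. It rests on the same core observation as the paper's — the additive structure $w_r=\beta^1_{r_1}+\beta^2_{r_2}$ lets the double time-integral over the rectangle factor, via Fubini, into a product of two one-parameter occupation integrals — but you pin down the density differently. The paper applies the occupation-times formula \eqref{eq:local time formula} twice against an arbitrary measurable test function $b$, obtaining $b\ast L^{-w}_t=b\ast L^{-\beta^1}_{t_1}\ast L^{-\beta^2}_{t_2}$ directly (and leaving implicit the duality step that identifies the densities from this family of identities). You instead specialize the test function to $e^{iz\cdot}$, identify $\mu^w_t=\mu^{\beta^1}_{t_1}\ast\mu^{\beta^2}_{t_2}$ by injectivity of the Fourier transform on finite measures, and then track the sign convention $L^{-w}_t(x)=L^w_t(-x)$ explicitly. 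Your route makes the uniqueness step and the $w\mapsto -w$ bookkeeping more explicit, at the cost of a slight detour through characteristic functions; the paper's is shorter because it works with $L^{-w}$ from the start. One caveat shared by both arguments: the existence of the one-parameter local times $L^{-\beta^i}$ as $\PP$-a.s.\ densities on $\RR^d$ requires $H_id<1$, not merely $H_i\in(0,1)$, so the appeal to Geman--Horowitz should be read with that implicit restriction (as the paper also does).
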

	\begin{proof}
		Let $b: \RR^d \to \RR$ be any measurable function and fix any $\omega \in \Omega$. To keep the notation simple we avoid writing $\omega$ explicitly.  By applying the local time formula twice we have
		\begin{align*}
			b\ast L^{-w}_t(x)&=\int_0^t b(x+w_r)\dd r
			=\int_{0}^{t_1}\int_0^{t_2} b(x+\beta_{r_1}^1+\beta_{r_2}^2)\dd r_1\dd r_2
			\\
			& = \int_0^{t_1}\int_{\RR^d} b(x+\beta^1_{r^1}-z)L_{t_2}^{-\beta^2}(z)\dd z\dd r_1 
			\\
			&=\int_{\RR^d}\int_{\RR^d} b(x-z'-z)L_{t_2}^{-\beta^2}(z)L_{t_1}^{-\beta^1}(z')\dd z\dd z'
			\\
			&= b\ast L_{t_1}^{-\beta^1}\ast L^{-\beta^2}_{t_2} (x). 
		\end{align*}
		
		Hence the result. 
	\end{proof}

	\begin{rem}
		Observe that the result proved in Lemma \ref{lem:fbmconv} is pathwise and thus holds for any
		random sampling of $\beta^1$ and $\beta^2$, regardless of whether they are independent or not.
	\end{rem}
	
	The next result is an interesting application of Theorem \ref{thm:existence and uniqueness} about the regularization by a special type of two-dimensional stochastic field which is the sum of two fBms.

	\begin{thm}\label{thm:fbm reg}
		Let $w_t :=\beta^1_{t_1}+\beta^2_{t_2}$, where $\beta^1$ and $\beta^2$ are two  fractional Brownian motions  on a probability space $(\Omega,\cF,\PP)$ with the Hurst parameters, respectively, $H_1,H_2 \in (0,1)$.
		Then, if $b\in B^\zeta_{1,1}(\RR^d)$, where $\zeta$ satisfies
		\begin{equation}\label{eq:fbm assumption}
			\zeta >d+3 - \frac{1}{2H_1}- \frac{1}{2H_2},
		\end{equation}
		then, for almost all $\omega\in \Omega$, there exists a unique solution to the equation 
		\begin{equation*}
			x_t(\omega) = \int_0^t b(x_s(\omega))\dd s +w_t(\omega), \quad t\in [0,T]^2, 
		\end{equation*}
		where the solution is given in the sense of Definition \ref{def: concept of solution} with $\xi_t=0$. 
	\end{thm}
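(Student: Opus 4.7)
The plan is to reduce everything to an application of Theorem \ref{thm:existence and uniqueness} with conjugate exponents $p=1$, $q=\infty$. For this pairing, the condition $\zeta+\alpha>2+\eta$ on the Besov regularities with the constraint $(1+\eta)\gamma_i>1$, $\eta\in(0,1)$, reduces in the limit $\eta\uparrow 1$, $\gamma_i\downarrow 1/2$ to requiring $L^{-w}\in C^{\gamma}_tB^\alpha_{\infty,\infty}$ with $\gamma_i>1/2$ (strictly) and $\alpha>3-\zeta$. Hence the task is to establish a quantified joint space-time regularity of the local time $L^{-w}$ with spatial Hölder exponent $\alpha$ approaching $\frac{1}{2H_1}+\frac{1}{2H_2}-d$ while keeping $\gamma_i>1/2$; the hypothesis \eqref{eq:fbm assumption} is then exactly what makes $\zeta + \alpha > 3$ achievable.

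First I would use Lemma \ref{lem:fbmconv} to write $L^{-w}_t(x)=(L^{-\beta^1}_{t_1}*L^{-\beta^2}_{t_2})(x)$ pathwise, so that the rectangular increment factorizes in Fourier space:
\begin{equation*}
\square_{s,t}\widehat{L^{-w}}(z)=\Bigl(\int_{s_1}^{t_1}e^{-iz\cdot\beta^1_r}\dd r\Bigr)\Bigl(\int_{s_2}^{t_2}e^{-iz\cdot\beta^2_r}\dd r\Bigr).
\end{equation*}
Even if $\beta^1$ and $\beta^2$ are not independent, Cauchy--Schwarz in $L^m(\Omega)$ combined with the one-dimensional bound $\|\int_{s_i}^{t_i}e^{iz\beta^i_r}\dd r\|_{L^{2m}(\Omega)}\lesssim (1+|z|^2)^{-\lambda_i'/2}(t_i-s_i)^{1-\lambda_i'H_i}$ from \cite[Thm.~3.1]{HarangPerkowski2020} (valid as soon as $1-\lambda_i'H_i>1/2$) yields
\begin{equation*}
\bigl\|\square_{s,t}\widehat{L^{-w}}(z)\bigr\|_{L^m(\Omega)}\lesssim (1+|z|^2)^{-(\lambda_1'+\lambda_2')/2}(t_1-s_1)^{1-\lambda_1'H_1}(t_2-s_2)^{1-\lambda_2'H_2}.
\end{equation*}

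Next, following exactly the Minkowski integral inequality argument from the proof of Theorem \ref{regularity fBS}, integrating against $(1+|z|^2)^\sigma$ and choosing $\sigma<\lambda_1'+\lambda_2'-d/2$ yields $\mathbb{E}\|\square_{s,t}L^{-w}\|_{H^\sigma}^m\lesssim (t_1-s_1)^{m(1-\lambda_1'H_1)}(t_2-s_2)^{m(1-\lambda_2'H_2)}$, so that the joint Kolmogorov continuity theorem (\cite[Thm.~3.1]{HU20133359}) gives $L^{-w}\in C^\gamma_tH^\sigma_x$ almost surely for $\gamma_i<1-\lambda_i'H_i$. The Sobolev embedding $H^\sigma\hookrightarrow B^{\sigma-d/2}_{\infty,2}\hookrightarrow B^{\alpha}_{\infty,\infty}$ for $\alpha<\sigma-d/2$ then upgrades this to $L^{-w}\in C^\gamma_tB^\alpha_{\infty,\infty}$ for any $\alpha<\lambda_1'+\lambda_2'-d$.

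Finally, sending $\lambda_i'\uparrow 1/(2H_i)$ (which forces $\gamma_i\downarrow 1/2$) makes $\alpha$ approach $\frac{1}{2H_1}+\frac{1}{2H_2}-d$; choosing $\eta<1$ close enough to $1$ that $(1+\eta)\gamma_i>1$ remains valid (which is possible precisely because $\gamma_i>1/2$ strictly), the condition $\zeta+\alpha>2+\eta$ is satisfied under \eqref{eq:fbm assumption}. An application of Theorem \ref{thm:existence and uniqueness} with $b\in B^\zeta_{1,1}$ and $L^{-w}\in C^\gamma_tB^\alpha_{\infty,\infty}$ delivers existence and uniqueness in the sense of Definition \ref{def: concept of solution}, for almost every $\omega$. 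The main subtlety is the parameter balancing, since one must simultaneously maintain $\gamma_i>1/2$, ensure $\sigma$ is large enough for the Sobolev embedding to produce a Hölder exponent $\alpha$ close to the sharp value, and choose $\eta\in(0,1)$ compatible with both $(1+\eta)\gamma_i>1$ and $\zeta+\alpha>2+\eta$; the exclusion of the exceptional null set requires this optimization to hold for a countable dense family of parameter choices, which is standard.
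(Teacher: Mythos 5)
Your proposal is correct, and the reduction to Theorem \ref{thm:existence and uniqueness} with $p=1$, $q=\infty$ together with the final parameter balancing matches the paper. However, you establish the key regularity estimate $L^{-w}\in C^\gamma_t B^\alpha_{\infty,\infty}$ by a genuinely different route. The paper's proof is purely pathwise after quoting the known almost-sure regularity $L^{-\beta^i}\in C^{\gamma_i}_t H^{\frac{1}{2H_i}-\frac{d}{2}-\epsilon}_x$ from \cite{HarangPerkowski2020}: it applies Lemma \ref{lem:fbmconv}, notes the factorization $\square_{s,t}L^{-w}=(L^{-\beta^1}_{t_1}-L^{-\beta^1}_{s_1})\ast(L^{-\beta^2}_{t_2}-L^{-\beta^2}_{s_2})$, and invokes Young's convolution inequality in Besov spaces $\omega$-by-$\omega$ to add the two spatial regularities, landing directly in $C^\gamma_t C^{\frac{1}{2H_1}+\frac{1}{2H_2}-d}_x$. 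You instead redo the probabilistic estimates from scratch on the Fourier side, mimicking the proof of Theorem \ref{regularity fBS}: the rectangular increment of $\widehat{L^{-w}}$ factorizes into two one-dimensional oscillatory integrals, Cauchy--Schwarz in $L^m(\Omega)$ decouples them (so independence is not needed, consistent with the theorem's hypotheses), and Minkowski plus the joint Kolmogorov theorem of \cite{HU20133359} plus the embedding $H^\sigma\hookrightarrow B^{\sigma-d/2}_{\infty,\infty}$ deliver the same exponent $\alpha<\frac{1}{2H_1}+\frac{1}{2H_2}-d$ (your two losses of $d/2$ exactly reproduce the single loss of $d$ incurred by the paper's Young convolution step). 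The paper's route is shorter because it reuses the one-dimensional result as a black box; yours is more self-contained, yields moment bounds on the Hölder constants as a by-product, and is structurally closer to the fractional-Brownian-sheet argument of Theorem \ref{regularity fBS}. Your closing remark about a countable dense family of parameters is harmless but unnecessary here: since $b$ and $\zeta$ are fixed in the statement, a single admissible choice of $(\lambda_1',\lambda_2',\gamma,\sigma,\eta)$ suffices, and its null set is the one discarded.
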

	\begin{proof}
		We know that $\PP$-a.s. the local times $L^{-\beta^1}$ and $L^{-\beta^2}$ associated to $\beta^1$ and $\beta^2$ are contained in $C^{\gamma_1}_tH^{\frac{1}{2H_1}-\frac{d}{2}-\epsilon}_x$ and $C^{\gamma_2}_tH^{\frac{1}{2H_2}-\frac{d}{2}-\epsilon}_x$ for any $\epsilon>0$ and some $\gamma_1,\gamma_2>\frac{1}{2}$, see e.g. \cite{HarangPerkowski2020}. Let us set $\gamma=(\gamma_1,\gamma_2)$ and choose $\eta\in (0,1)$ such that $(1+\eta)\gamma>1$.

		Since, from Lemma \ref{lem:fbmconv},  the local time $L^{-w}$ is given by a convolution of two (one-dimensional) local times $L^{-\beta^1}\ast L^{-\beta^2}$, its regularity is found from Young's convolution inequality in Besov spaces (see e.g. \cite{BahCheDan}). Thus its regularity in the spatial variable is given as the sum of the spatial  regularities of the one dimensional local times, i.e. for all $t\in [0,T]^2$,  $L^{-w}_t\in C_x^{\frac{1}{2H_1}+\frac{1}{2H_2}-d}\simeq B^{\frac{1}{2H_1}+\frac{1}{2H_2}-d}_{\infty,\infty}$. 
		
		Furthermore, it is readily checked that 
		\begin{equation*}
			\square_{s,t}L^{-w} = (L_{t_1}^{-\beta^1}-L_{s_1}^{-\beta^1})\ast(L_{t_2}^{-\beta^2}-L_{s_2}^{-\beta^2}), 
		\end{equation*}
		and thus it follows by elementary computations that $t\mapsto L^{-w}_t\in C^\gamma_t$, and we conclude that $L^{-w}\in C^\gamma_tC^{\frac{1}{2H_1}+\frac{1}{2H_2}-d}_x$. Again by invoking the Young's convolution inequality  using that $b\in B^{\zeta}_{1,1}$ it follows by the same estimate as in \eqref{eq:Young conv ineq} together with the, we get that $b\ast L^{-w}\in C^\gamma_tC^{\zeta+\frac{1}{2H_1}+\frac{1}{2H_2}-d}_x$. Now it follows from the assumption \eqref{eq:fbm assumption} that we can apply Theorem \ref{thm:existence and uniqueness}, which concludes the proof. 
	\end{proof}

	Our next result shows that it is sufficient to have only one  of $\beta_1$ and $\beta_2$ random in Theorem \ref{thm:fbm reg}.   
	
	\begin{lem}\label{lem:fbm reg-1}
		Let $[0,T]^2$, $\xi_t$ and $b$ as in Theorem \ref{thm:fbm reg}. Let $w_t :=\beta_{t_1} + f(t_2), t=(t_1,t_2) \in [0,T]^2$, where $\beta$ is a fractional Brownian motion on a probability space $(\Omega,\cF,\PP)$ with the Hurst parameter $H \in (0,1)$ and  $f:[0,T] \to \RR^d$ is a measurable function. Then, if $\zeta$ satisfies the relation
		\begin{equation}\label{eq:fbm assumption-1}
			\zeta >d+3 - \frac{1}{2H},
		\end{equation}
		then, the conclusion of Theorem \ref{thm:fbm reg} holds true. 
	\end{lem}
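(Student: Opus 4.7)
The strategy mirrors the proof of Theorem \ref{thm:fbm reg}, replacing the second fBm by a measurable function while keeping the probabilistic regularity coming from the remaining one-parameter fBm. The first step is to identify the local time $L^{-w}$ explicitly. Given any bounded measurable $b$, I would apply Fubini together with the one-dimensional local time formula for $\beta$ (for each fixed $r_2\in[0,T]$) to obtain
\begin{equation*}
\int_0^t b(x+w_r)\,\dd r = \int_0^{t_2} \bigl(b\ast L^{-\beta}_{t_1}\bigr)(x+f(r_2))\,\dd r_2 = \bigl(b\ast L^{-\beta}_{t_1}\ast \mu^{-f}_{t_2}\bigr)(x),
\end{equation*}
where $\mu^{-f}_{t_2}$ denotes the occupation measure of $-f$ on $[0,t_2]$. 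Therefore, in the distributional sense, $L^{-w}_t = L^{-\beta}_{t_1}\ast \mu^{-f}_{t_2}$. Note that $\mu^{-f}$ is a finite measure with $\|\mu^{-f}_{t_2}-\mu^{-f}_{s_2}\|_{TV}\leq |t_2-s_2|$, regardless of any regularity assumption on $f$.

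Next I would transfer space-time regularity from $L^{-\beta}$ to $L^{-w}$. By the one-dimensional result in \cite{HarangPerkowski2020} we have $L^{-\beta}\in C^{\gamma_1}_{t_1} H^{\frac{1}{2H}-\frac{d}{2}-\epsilon}_x$ almost surely, for some $\gamma_1>\frac{1}{2}$ and any $\epsilon>0$. Using the Besov embedding $H^s\hookrightarrow B^{s-d/2}_{\infty,\infty}$, this gives $L^{-\beta}\in C^{\gamma_1}_{t_1}B^{\frac{1}{2H}-d-\epsilon'}_{\infty,\infty}$ for any $\epsilon'>\epsilon$. The rectangular increment factorizes as
\begin{equation*}
\square_{s,t}L^{-w}=\bigl(L^{-\beta}_{t_1}-L^{-\beta}_{s_1}\bigr)\ast \bigl(\mu^{-f}_{t_2}-\mu^{-f}_{s_2}\bigr),
\end{equation*}
and since convolution with a finite measure is bounded on $B^\alpha_{\infty,\infty}$ with operator norm controlled by total variation (because $|\widehat{\mu}|\leq \|\mu\|_{TV}$ and the Littlewood--Paley blocks inherit this bound in $L^\infty$), one obtains
\begin{equation*}
\|\square_{s,t}L^{-w}\|_{B^{\frac{1}{2H}-d-\epsilon'}_{\infty,\infty}}\lesssim |t_1-s_1|^{\gamma_1}|t_2-s_2|.
\end{equation*}
Combined with the fact that $L^{-w}$ vanishes on the boundary of $[0,T]^2$ (so the $(1,0)$ and $(0,1)$ seminorms are zero), this yields $L^{-w}\in C^\gamma_t B^\alpha_{\infty,\infty}$ with $\alpha=\tfrac{1}{2H}-d-\epsilon'$ and any $\gamma=(\gamma_1,\gamma_2)\in(\tfrac{1}{2},1)\times(\tfrac{1}{2},1]$.

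To conclude, I would apply Theorem \ref{thm:existence and uniqueness} with $p=1$, $q=\infty$. The conditions to check are $(1+\eta)\gamma_i>1$ for $i=1,2$ and $\zeta+\alpha>2+\eta$. The binding constraint comes from $\gamma_1$ which can only be taken slightly above $\tfrac{1}{2}$, forcing $\eta$ to be slightly less than $1$; taking $\gamma_2=1$ relaxes the second $\eta$-condition. Inserting $\alpha=\tfrac{1}{2H}-d-\epsilon'$ and letting $\eta\nearrow 1$, $\epsilon'\searrow 0$, the requirement $\zeta+\alpha>2+\eta$ reduces to precisely \eqref{eq:fbm assumption-1}. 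The main obstacle, in my view, is the clean identification of $L^{-w}$ as the convolution $L^{-\beta}_{t_1}\ast\mu^{-f}_{t_2}$ in a distributional sense — since $f$ has no associated local time, one must view $\mu^{-f}$ as a genuine measure rather than a density, and then verify that Young's convolution inequality on Besov spaces still applies with $\mu^{-f}$ in the role of a $B^0_{1,\infty}$-class object bounded by total variation; the remaining steps then follow the scheme of Theorem \ref{thm:fbm reg} essentially unchanged.
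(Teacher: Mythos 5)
Your proposal is correct and follows essentially the same route as the paper: identify $\square_{s,t}L^{-w}$ as the convolution of the fBm local-time increment with the occupation measure of $f$, bound the latter in total variation by $|t_2-s_2|$, and transfer spatial regularity via Young's convolution inequality (convolution with a finite measure being bounded by its TV norm) before invoking Theorem \ref{thm:existence and uniqueness}. The only cosmetic differences are that the paper estimates $b\ast\square_{s,t}L^{-w}$ directly in $C_x^{\zeta+\frac{1}{2H}-d-\epsilon}$ rather than first placing $L^{-w}$ in $C^\gamma_t B^\alpha_{\infty,\infty}$, and that the $(1,0)$ and $(0,1)$ seminorms of $L^{-w}$ are not literally zero but are controlled by the rectangular seminorm via Remark \ref{rem: decomp of 2d functions}, which is all that is needed.
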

	\begin{proof}
		First observe that by definition of occupation measure 
		$$ \| \mu^{f}_{s_2,t_2} \|_{TV} = |t_2-s_2|, \quad \forall s_2<t_2 \in [0,T],$$
		where $\mu^{f}$ is the occupation measure of $\beta^2$ and $\| \cdot \|_{TV}$ is the total variation norm. Next, since $\| \mu^{f}_{\cdot,\cdot} \|_{B_{1,\infty}^1} \lesssim \| \mu^{f}_{\cdot,\cdot} \|_{TV} $ and, as in Theorem \ref{thm:fbm reg},  $L^{-\beta^1} \in C^{\gamma}_t H^{\frac{1}{2H}-\frac{d}{2}-\epsilon}_x, \PP$-a.s., local time relation \eqref{eq:local time formula} and convolution inequalities give, for $s=(s_1,s_2) < t=(t_1,t_2) \in [0,T]^2$,  
		\begin{align}
			& \| b \ast \square_{s,t}L^{-w}\|_{C_x^{\zeta+\frac{1}{2H} -d -\epsilon }}  = \| b \ast L_{s_1,t_1}^{-\beta^1}\ast \mu^f_{s_2,t_2} \|_{C_x^{\zeta+\frac{1}{2H} -d -\epsilon }} \nonumber\\
			& \qquad \lesssim \|b\|_{B_{1,1}^{\zeta}} \|L_{s_1,t_1}^{-\beta^1} \|_{C_x^{\frac{1}{2H} -d -\epsilon }}  \|\mu^f_{s_2,t_2} \|_{TV} \lesssim |t_1-s_1|^\gamma |t_2-s_2|. \nonumber
		\end{align}
		This implies that $b \ast \square_{s,t}L^{-w} \in C_t^{(\gamma,1)} C_x^{\zeta+\frac{1}{2H} -d -\epsilon }$. Then, as in the proof of previous theorem, the conclusion follows by applying Theorem \ref{thm:existence and uniqueness}. 
	\end{proof}

	\section{Wave equation with noisy boundary}\label{sec:wave eqn}
	
	\subsection{Statement of the problem}
	In the following section, we show how the theory of 2D non-linear Young equations can be employed in the study of Goursat boundary regularization for wave equations with singular non-linearities. More precisely, we intend to study the problem
	\begin{equation}\label{eq: wave eq, w/noisy boundary}
		\begin{split}
			\left( \frac{\partial^2}{\partial x^2}- \frac{\partial^2}{\partial y^2}\right)u=h(u(x,y)),
		\end{split}
	\end{equation}
	on $(x, y)\in R_{\frac{\pi}{4}}\circ [0,T]^2$ (here $R_{\frac{\pi}{4}}$ denotes the rotation operator of the plane by $\pi/4$)  subject to the boundary conditions along characteristics
	\begin{equation}
		\begin{split}
			u(x,y)&=\beta^1(y) \qquad \text{if}\ y=x\\
			u(x,y)&=\beta^2(y) \qquad \text{if}\ y=-x,
		\end{split}
		\label{boundary wave eqn}
	\end{equation}
	and the consistency condition $\beta^1(0)=\beta^2(0)$ for a potentially distributional non-linearity $h$. Note that for distributional $h$, it is a priori even unclear what is meant by a solution to \eqref{eq: wave eq, w/noisy boundary}.
	We therefore start by considering smooth mollifications $h^\epsilon=h*\rho^\epsilon$ as non-linearities, for which a change of coordinates yields a reformulation of \eqref{eq: wave eq, w/noisy boundary} as a Goursat problem \eqref{eq:Goursat type noisy boundary} which in turn can be analysed as a 2D non-linear Young equation \eqref{problem to solve}. As seen in the previous section, 2D non-linear Young equations can be well posed even in the case of distributional $h$, provided sufficient regularization by the boundary conditions $\beta^1, \beta^2$ is assumed. Moreover, they enjoy the stability property of Proposition \ref{stability of solutions}. In particular, this will imply that the sequence of solutions $u^\epsilon$ to the problem \eqref{eq: wave eq, w/noisy boundary} with mollified non-linearity $h^\epsilon$ - constructed in passing by the 2D non-linear Young equation - will converge uniformly as $\epsilon\to 0$, independent of the sequence of mollifications chosen. It will be in this sense that we solve the problem \eqref{eq: wave eq, w/noisy boundary} for distributional non-linearities $h$. \\
	\\
	After this brief motivation,  let us proceed to introduce the aforementioned transformations. Assume for some smooth $h^\epsilon$, we have a solution $u^\epsilon$ to 
	\begin{equation}\label{wave regularized}
		\begin{split}
			\left( \frac{\partial^2}{\partial x^2}- \frac{\partial^2}{\partial y^2}\right)u^\epsilon=h^\epsilon(u^\epsilon(x,y)),
		\end{split}
	\end{equation}
	that satisfies the Goursat boundary conditions \eqref{boundary wave eqn}. 
	Consider the transformation 
	\begin{equation}
		\begin{split}
			t_1&=\frac{y+x}{\sqrt{2}},\\
			t_2&=\frac{y-x}{\sqrt{2}},
			\label{transform}
		\end{split}
	\end{equation}
	which corresponds to the rotation operator $R_{-\frac{\pi}{4}} $
	and consider the function 
	\[
	\phi^\epsilon(t_1,t_2):=u^\epsilon\left( \frac{t_1-t_2}{\sqrt{2}},\frac{t_1+t_2}{\sqrt{2}}\right).
	\]
	Then it is easily verified that $\phi^\epsilon$ solves
	\begin{equation*}
		-2\frac{\partial^2}{\partial t_1 \partial t_2} \phi^\epsilon(t_1, t_2)=h^\epsilon\left(\phi^\epsilon(t_1, t_2)\right).
	\end{equation*}
	Moreover, note that if $t_2=0$, then $y-x=0$, which implies that 
	\[
	\phi^\epsilon(t_1, 0)=u^\epsilon\left(\frac{t_1}{\sqrt{2}}, \frac{t_1}{\sqrt{2}}\right)=\beta^1\left(\frac{t_1}{\sqrt{2}}\right).
	\]
	Similarly,  if $t_1=0$, then $y=-x$, i.e. 
	\[
	\phi^\epsilon(0,t_2)= u^\epsilon\left(\frac{-t_2}{\sqrt{2}}, \frac{t_2}{\sqrt{2}}\right)=\beta^2\left(\frac{t_2}{\sqrt{2}}\right).
	\]
	Hence, we derived from the wave equation with the boundary condition along characteristics the new boundary problem
	\begin{equation}\label{eq:Goursat type noisy boundary}
		-2\frac{\partial^2}{\partial t_1 \partial t_2} \phi^\epsilon=h^\epsilon\left(\phi^\epsilon(t_1,t_2)\right),
	\end{equation}
	with boundary condition 
	\begin{equation*}
		\begin{split}
			\phi^\epsilon(t_1, 0)&=\beta^1\left(\frac{t_1}{\sqrt{2}}\right),\\
			\phi^\epsilon(0, t_2)&=\beta^2\left(\frac{t_2}{\sqrt{2}}\right).
		\end{split}
	\end{equation*}
	Note that conversely, by employing the inverse transform to \eqref{transform}, solutions to \eqref{eq:Goursat type noisy boundary} give rise to solutions to \eqref{wave regularized}: If $\phi^\epsilon$ solves \eqref{eq:Goursat type noisy boundary}, then $u^\epsilon(x,y):=\phi^\epsilon((y-x)/\sqrt{2}, (y+x)/\sqrt{2})$ solves \eqref{wave regularized}. 
	
	It follows from the above derivation that by integration on both sides of \eqref{eq:Goursat type noisy boundary} using $t=(t_1,t_2)\in [0,T]^2$ (for some well chosen $T$)  we have 
	\begin{equation*}
		\phi_t^\epsilon=\beta^1\left(\frac{t_1}{\sqrt{2}}\right)+\beta^2\left(\frac{t_2}{\sqrt{2}}\right)-2\int_0^t h(\phi_s^\epsilon)\dd s,
	\end{equation*}
	where $\phi_t^\epsilon=u^\epsilon\left(\frac{t_1-t_2}{\sqrt{2}},\frac{t_1+t_2}{\sqrt{2}}\right)$. 
	Consider then $\psi^\epsilon=\phi^\epsilon_{\cdot}-\beta^1\left(\frac{\cdot}{\sqrt{2}}\right)-\beta^2\left(\frac{\cdot}{\sqrt{2}}\right)$, which then  solves the equation 
	\begin{equation}
		\psi_t^\epsilon=-2\int_0^t h^\epsilon\left(\psi^\epsilon_s+\beta^1\left(\frac{s_1}{\sqrt{2}}\right)+\beta^2\left(\frac{s_2}{\sqrt{2}}\right)\right)\dd s.
		\label{problem to solve}
	\end{equation}
	The above problem \eqref{problem to solve} can now alternatively be solved with the 2D non-linear Young theory as brought forward above, provided $\beta^1, \beta^2$ are sufficiently regularizing. To go back to the wave equation, we employ the reverse transform to \eqref{transform} to obtain
	\begin{equation}
		u^\epsilon(x,y):=\psi^\epsilon\left(\frac{y+2}{\sqrt{2}}, \frac{y-x}{\sqrt{2}}\right)+\beta^1\left(\frac{y+x}{\sqrt{2}}\right)+\beta^2\left(\frac{y-x}{\sqrt{2}}\right),
		\label{solution formula}
	\end{equation}
	as the unique solution to \eqref{wave regularized}. Note that as indeed $\psi^\epsilon(t_1, 0)=\psi^\epsilon(0, t_2)=0$ for any $t_1, t_2\in [0,T]$, we have that the boundary conditions \eqref{boundary wave eqn} are satisfied. Finally, if $h^\epsilon$ is issued from the mollification of some distribution $h$, i.e. $h^\epsilon=h*\rho^\epsilon$, we know by Proposition \ref{stability of solutions} that $(\psi^\epsilon)_\epsilon$ and thus $(u^\epsilon)_\epsilon$ will converge uniformly. This observation will serve us to define the following notion of solutions to \eqref{eq: wave eq, w/noisy boundary} even for distributional nonlinearities $h$.
	\begin{defn}
		Let $h\in \mathcal{S}(\mathbb{R})'$. We say that  $u$ is a solution to \eqref{eq: wave eq, w/noisy boundary} if for any sequence $(\rho^\epsilon)_\epsilon$ of mollifications, the sequence $(u^\epsilon)_\epsilon$ of solutions to \eqref{wave regularized} with mollified non-linearity $h^\epsilon=h*\rho^\epsilon$ converges to $u$ uniformly on $R_{\frac{\pi}{4}}\circ [0,T]^2$, i.e. in $C(R_{\frac{\pi}{4}}\circ [0,T]^2; \RR)$. 
		\label{notion of solution wave}
	\end{defn}
	Remark that with the above notion of solution, existence implies also uniqueness. Let us now pass to the main result of this section. 
	\begin{thm}\label{thm-wave equation}
		Let $\bar{\beta}^i(\cdot)=\beta^i(\cdot/\sqrt{2})$ and $p,q\in [1,\infty]$ such that $\frac{1}{p}+\frac{1}{q}=1$. Suppose $h\in B^\zeta_{p,p}(\RR)$ and suppose that $L_t(x)=(L^{-\bar{\beta}^1}_{t_1}*L^{-\bar{\beta}^2}_{t_2})(x)$ satisfies $L\in C^\gamma_t B^\alpha_{q,q}(\RR)$. Assume there exists $\eta\in (0,1)$ such that \[
		\gamma(1+\eta)>1 \qquad and \quad \zeta+\alpha>2+\eta .
		\]
		Then there exists a unique solution $u$ to \eqref{eq: wave eq, w/noisy boundary} in the sense of Definition \ref{notion of solution wave} given by 
		\[
		u(x,y):=\psi\left(\frac{y+2}{\sqrt{2}}, \frac{y-x}{\sqrt{2}}\right)+\beta^1\left(\frac{y+x}{\sqrt{2}}\right)+\beta^2\left(\frac{y-x}{\sqrt{2}}\right),
		\]
		where $\psi$ is the unique solution to 
		\begin{equation}
			\label{limit ode}
			\psi_t=-2\int_0^t (h*L)(ds, \psi_s),
		\end{equation}
		understood in the sense of Definition \ref{def: concept of solution}.
	\end{thm}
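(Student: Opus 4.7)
The plan is to combine the existence and uniqueness theory for 2D non-linear Young equations (Theorem \ref{thm:existence and uniqueness} and the underlying Theorem \ref{thm:nly equation}) with the stability result of Proposition \ref{stability of solutions}, using the change of variables derived in the preceding discussion. Concretely, I would first establish existence of the auxiliary $\psi$ solving \eqref{limit ode}, then identify $u$ through the rotation as the limit of mollified classical solutions $u^\epsilon$, invoking stability to verify convergence independent of the chosen mollification.

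First I would verify the hypotheses of Theorem \ref{thm:existence and uniqueness} for the integral equation \eqref{limit ode}. By Young's convolution inequality in Besov spaces applied to $h\in B^\zeta_{p,p}(\RR)$ and $L\in C^\gamma_t B^\alpha_{q,q}(\RR)$ with $1/p+1/q=1$, we obtain $h*L \in C^\gamma_t B^{\zeta+\alpha}_{\infty,\infty}$, which embeds into $C^\gamma_t C^{2+\eta}_x$ thanks to $\zeta+\alpha>2+\eta$. Together with $(1+\eta)\gamma>1$, Theorem \ref{thm:nly equation} yields a unique $\psi\in C^\gamma_t$ solving \eqref{limit ode}, and defining $u$ via the stated formula gives a candidate solution to \eqref{eq: wave eq, w/noisy boundary}.

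Next I would verify that this $u$ satisfies Definition \ref{notion of solution wave}. Fix a mollifier $\rho^\epsilon$ and set $h^\epsilon = h*\rho^\epsilon$. For smooth $h^\epsilon$, classical Goursat theory (or d'Alembert for the rotated equation) yields a unique $C^2$ solution $u^\epsilon$ to \eqref{wave regularized} with boundary \eqref{boundary wave eqn}; the derivation preceding the theorem then shows that $\psi^\epsilon := \phi^\epsilon - \bar\beta^1 - \bar\beta^2$ satisfies \eqref{problem to solve}. The key identification is that in the smooth regime,
\[
\int_0^t h^\epsilon\bigl(\psi^\epsilon_s + \bar\beta^1(s_1) + \bar\beta^2(s_2)\bigr)\,ds = \int_0^t (h^\epsilon * L)(ds,\psi^\epsilon_s),
\]
where the right-hand side is the 2D non-linear Young integral of Proposition \ref{prop:conv NLY integral}. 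This identity is obtained by applying the occupation times formula \eqref{eq:local time formula} twice (once in each variable) in the spirit of Lemma \ref{lem:fbmconv} to rewrite the Riemann integral on the left as a convolution against $L_t = L^{-\bar\beta^1}_{t_1}*L^{-\bar\beta^2}_{t_2}$, and then invoking the consistency Lemma \ref{consistency 2d nonlinear young} since $h^\epsilon *L$ is smooth enough. Hence $\psi^\epsilon$ solves the non-linear Young equation with integrand $-2h^\epsilon * L$.

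Finally I would apply Corollary \ref{cor. stability} (equivalently Proposition \ref{stability of solutions}) with $A^\epsilon = -2h^\epsilon * L$ and $A = -2 h * L$: since $\|h^\epsilon - h\|_{B^\zeta_{p,p}}\to 0$, Young's convolution inequality yields $\|A^\epsilon - A\|_{C^\gamma_t C^{2+\eta}_x}\to 0$, hence $\psi^\epsilon \to \psi$ uniformly. Transforming back via the inverse rotation shows $u^\epsilon \to u$ in $C(R_{\pi/4}\circ [0,T]^2;\RR)$, independent of the chosen mollifying sequence, establishing existence in the sense of Definition \ref{notion of solution wave}. Uniqueness follows because any such limit must coincide with $u$ by the uniqueness of $\psi$ in Theorem \ref{thm:existence and uniqueness}. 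The main obstacle is the careful justification of the two-parameter local time identity above, which requires an application of the occupation times formula separately in each variable (exploiting the tensor structure $\bar\beta^1(s_1)+\bar\beta^2(s_2)$), combined with Fubini and the consistency of the Riemann and non-linear Young integrals in the smooth setting; once secured for $h^\epsilon$, the passage to distributional $h$ is driven entirely by the stability estimate.
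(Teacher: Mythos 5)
Your proposal is correct and follows essentially the same route as the paper: existence and uniqueness of $\psi$ via Theorem \ref{thm:existence and uniqueness}, identification of the mollified solutions $u^\epsilon$ through the rotation and the local-time/consistency lemmas, and convergence via Corollary \ref{cor. stability}. You actually spell out more detail than the paper does (notably the identification of the Riemann integral with the non-linear Young integral via the tensorized occupation-times formula, which the paper delegates to Proposition \ref{prop:conv NLY integral}, Lemma \ref{consistency 2d nonlinear young} and Lemma \ref{lem:fbmconv} in the preceding discussion), but the argument is the same.
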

	\begin{proof}
		The above is an immediate consequence of Theorem \ref{thm:existence and uniqueness} as well as Proposition \ref{stability of solutions} in conjunction with the above considerations. Indeed, note that for any mollification $h^\epsilon=h*\rho^\epsilon$, the solution $u^\epsilon$ to \eqref{wave regularized} is given by \eqref{solution formula}. Note that by Corollary \ref{cor. stability}, $\psi^\epsilon$ given by \eqref{problem to solve} converges uniformly to $\psi$, the solution to 
		\eqref{limit ode}. Existence and uniqueness of $\psi$ is ensured by Theorem \ref{thm:existence and uniqueness}. Finally, by \eqref{solution formula}, this implies that $(u^\epsilon)_\epsilon$ converges uniformly to 
		\begin{equation*}
			u(x,y):=\psi\left(\frac{y+2}{\sqrt{2}}, \frac{y-x}{\sqrt{2}}\right)+\beta^1\left(\frac{y+x}{\sqrt{2}}\right)+\beta^2\left(\frac{y-x}{\sqrt{2}}\right),
		\end{equation*}
		completing the proof.
	\end{proof}
	Finally, we illustrate the above theorem  in the setting where the boundary processes $\beta^i$ are given as fractional Brownian motions in the following corollary. 
	\begin{cor}
		Let $h\in B^\zeta_{1,1}(\RR)$ for any $\zeta \in \RR$. Then there exist two independent fractional Brownian motions $\bar{\beta}^1, \bar{\beta}^2$ of  Hurst parameters, respectively, $H_1, H_2\in (0,1)$ such that problem \eqref{eq: wave eq, w/noisy boundary}  has a unique solution in the sense of Definition \ref{notion of solution wave}.
	\end{cor}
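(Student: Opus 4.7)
The plan is to invoke Theorem \ref{thm-wave equation} with $p=1$, $q=\infty$ by selecting Hurst parameters $H_1, H_2\in(0,1)$ small enough in a manner depending on $\zeta$. Since $h \in B^\zeta_{1,1}(\RR)$, what remains to be shown is that for appropriate Hurst parameters, the convolution local time $L_t = L^{-\bar{\beta}^1}_{t_1} * L^{-\bar{\beta}^2}_{t_2}$ (which is indeed the local time of $w = \bar{\beta}^1 + \bar{\beta}^2$ by Lemma \ref{lem:fbmconv}) lies in $C^\gamma_t B^\alpha_{\infty,\infty}(\RR)$ for some $\gamma \in (1/2,1]^2$ and $\alpha \in \RR$ admitting $\eta \in (0,1)$ with $\gamma(1+\eta) > 1$ and $\zeta + \alpha > 2 + \eta$. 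Independence of the fBms plays no essential role here but is consistent with the formulation of the statement.

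First, I would recall from \cite{HarangPerkowski2020} (or equivalently from the one-dimensional specialization of Theorem \ref{regularity fBS}) that for any $\lambda_i < \frac{1}{2H_i} - \frac{1}{2}$ the local time of $\bar{\beta}^i$ lies almost surely in $C^{\gamma_i}_t H^{\lambda_i}_x$, with $\gamma_i$ any element of $(1/2, 1 - (\lambda_i + 1/2)H_i)$. Using the one-dimensional Besov convolution bound $H^{\lambda_1} * H^{\lambda_2} = B^{\lambda_1}_{2,2} * B^{\lambda_2}_{2,2} \hookrightarrow B^{\lambda_1 + \lambda_2}_{\infty,\infty}$ (which follows from $L^2 * L^2 \hookrightarrow L^\infty$ applied on fractional derivatives, in the same spirit as the proof of Theorem \ref{thm:fbm reg}) together with the factorization $\square_{s,t}L = (L^{-\bar{\beta}^1}_{t_1} - L^{-\bar{\beta}^1}_{s_1}) * (L^{-\bar{\beta}^2}_{t_2} - L^{-\bar{\beta}^2}_{s_2})$ of the rectangular increment, I conclude $L \in C^{(\gamma_1,\gamma_2)}_t B^{\lambda_1+\lambda_2}_{\infty,\infty}$.

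Next, it suffices to choose parameters so that both conditions of Theorem \ref{thm-wave equation} hold. I would fix some $\eta \in (0,1)$ (say $\eta = 1/2$) and select $\lambda_i$ just below $\frac{\eta}{(1+\eta)H_i} - \frac{1}{2}$; this guarantees $\gamma_i > \frac{1}{1+\eta}$ and hence $\gamma_i(1+\eta) > 1$ for $i=1,2$. With this choice, $\alpha = \lambda_1 + \lambda_2$ can be taken arbitrarily close to $\frac{\eta}{1+\eta}\bigl(\frac{1}{H_1} + \frac{1}{H_2}\bigr) - 1$, which tends to $+\infty$ as $H_1, H_2 \to 0$. Therefore for any $\zeta \in \RR$, selecting $H_1$ and $H_2$ small enough yields $\zeta + \alpha > 2 + \eta$. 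Theorem \ref{thm-wave equation} then delivers the unique solution in the sense of Definition \ref{notion of solution wave}.

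The only subtle point, more bookkeeping than obstacle, is the tension between spatial and temporal regularity of the local time: pushing $\lambda_i$ all the way up to its maximum $\frac{1}{2H_i} - \frac{1}{2}$ makes $\gamma_i$ collapse to $1/2$ and breaks the condition $\gamma(1+\eta) > 1$ for any $\eta > 0$. The resolution is to cap $\lambda_i$ at the strictly smaller value $\frac{\eta}{(1+\eta)H_i} - \frac{1}{2}$, losing the multiplicative factor $\frac{\eta}{1+\eta}<\frac{1}{2}$ in the available spatial regularity, and to compensate by shrinking $H_i$ further; since $\zeta$ is fixed while the $H_i$ are entirely at our disposal, this trade-off always closes in our favor.
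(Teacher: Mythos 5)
Your proof is correct and follows essentially the same route as the paper: the paper's own proof is a one-line citation of Theorem \ref{thm:fbm reg} together with Theorem \ref{thm-wave equation}, and your argument simply unpacks the content of Theorem \ref{thm:fbm reg} (the fBm local-time regularity from \cite{HarangPerkowski2020}, the convolution identity of Lemma \ref{lem:fbmconv}, Young's convolution inequality in Besov spaces, and shrinking $H_1,H_2$ to absorb any fixed $\zeta$). Your explicit handling of the trade-off between the spatial exponent $\lambda_i$ and the temporal exponent $\gamma_i$ is somewhat more careful than the paper's, which only records $\gamma_i>1/2$ and then chooses $\eta$ accordingly, but the mathematical content is the same.
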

	\begin{proof}
		This is a direct consequence of Theorem \ref{thm:fbm reg} in combination with Theorem \ref{thm-wave equation}. 
	\end{proof}

	\begin{rem}\label{rem-referee}
		It is intriguing to note that one can partially connect Lemma \ref{lem:fbm reg-1} to the solution theory of non-linear wave equations with random initial data as developed by Burq, and Tzvetkov in \cite{BurqTzvetkov14}, see also the work of Bourgain \cite{Bourgain96} which is the first step in this direction. One of the key step in these works is to define a probability measure suitable Sobolev spaces $\cH^s$ corresponding to each initial data belonging to $\cH^s$. To understand the relation loosely let us set one variable as time, say $t_1$, and other as space, say $t_2$, then by taking $t_1=0$, $(\beta^2,0)$, where $\beta^2$ is a fBm with Hurst parameter $H \in (0,1)$, can be regarded as pair of initial data. 
		Since the law of $\beta^2$ defines a probability measure on $C([0,T]; \RR)$, the results of the current section (together with Lemma \ref{lem:fbm reg-1}) gives the existence of a unique solution, in suitable sense, to a class of wave equation, which gives \eqref{eq: wave eq, w/noisy boundary} under $R_{\frac{\pi}{4}}$,  with $(\beta^2,0)$ as random initial data. In particular, since $H \in (0,1)$ is arbitrary, we construct a set of probability measures $\cM$ on $C([0,T]; \RR)$, of size uncountable infinitely, such that for each measure $\mu \in \cM$,  a class of wave equation is locally well-posed with $\mu$ as random initial data taking values in  $C([0,T]; \RR)$. This connection is striking and pointed out to us by the referee. Since it is really fascinating and challenging to see if one can make this formal argument rigorous, we leave this line of research for future work. 
	\end{rem}

	\section{Further challenges, open problems and concluding remarks}\label{sec:challenges}
	We have extended the pathwise regularization by noise framework introduced in \cite{Catellier2016}  to equations on the plane, driven by a continuous regularizing field. The concept of regularization in this article has been presented in view of the local time associated with the field $w$. While we present here the case of regularization when the field $w$ is given by a fractional Brownian sheet or as a sum of two independent fractional Brownian motions, further systematic investigations of the space-time regularity of the local time associated to various stochastic fields appears in order.  In particular refined estimates for the local time of the fractional Brownian sheet using a  ``multiparameter Stochastic Sewing Lemma'' or the application of a stochastic reconstruction theorem as recently provided in \cite{kern} appear as an interesting direction for further research.
	
	Consider now a general stochastic partial differential equation of the form 
	\begin{equation*}
		Lu(t,x) =b(u(t,x))+ \dot{w}(t,x),\quad (t,x)\in [0,T]\times R, 
	\end{equation*}
	where $b$ is a non-linear function, $L$ is a differential operator,  and $\dot{w}$ is the formal mixed partial derivative a stochastic field $w$. We assume $R$ is a hyper-cube in $\RR^k$.  Given that $L$ generates a semi-group $\{S_t\}$,   mild solution can typically be written as a multi-parameter Volterra equation, in the sense that 
	\begin{equation*}
		u(t,x)=\xi(t,x)+\int_0^t\int_R S_{t-s}(x-y)b(u(s,y))\dd y\dd s+\int_0^t\int_R S_{t-s}(x-y)\dot{w}(s,y)\dd y \dd s. 
	\end{equation*}
	The equation above could be reformulated by similar principles as in the current article, although certain extensions must be made with respect to the construction of a non-linear Young integral to account for the possibly singular nature of the Volterra operator $S$. 
	The stochastic process obtained in $\int_0^t\int_R S_{t-s}(x-y)\dot{w}(s,y)\dd y \dd s$ might indeed provide a regularizing effect in this equation, but it is then also needed to investigate the space-time regularity of the local time associated with this field. The authors of \cite{Athreya20} have recently made certain progress in this direction the case where $L$ is the heat operator. There they prove regularization by noise when $\dot{w}$ is a white noise, and allow for distributional coefficients, including the Dirac delta.
	For more general differential operators the approach outlined above might yield interesting  results related to regularization by noise effects for a great variety of SPDEs.

	An alternative approach to that presented in the current article would be to study the regularity of the averaged field instead of only the local time. That is, one can study the regularity of the mapping 
	\begin{equation*}
		[0,T]^2\times \RR^d \ni(t,x)\mapsto \int_0^t b(x+w_r)\dd r,
	\end{equation*}
	for a given distribution $b$, and then use this instead of the convolution between $b$ and the local time $L^w$ as done in Theorem \ref{thm:existence and uniqueness}. As observed in \cite{Catellier2016} and further developed in \cite{galeati2020noiseless}, studying the averaged field directly in the concept of regularization by noise allows for less regularity requirements on the distribution $b$. In fact, in the one dimensional case when considering the SDE 
	\begin{equation*}
		x_t = x_0+\int_0^t b(x_s)\dd s + \beta_t,\quad t\in [0,T], 
	\end{equation*}
	where $\beta_t$ is a fractional Brownian motion with $H<\frac{1}{2}$ it is shown in \cite{Catellier2016,galeati2020noiseless} that pathwise existence and uniqueness as well as differentiability of the flow holds if $b\in C^\alpha_x$ (with compact support) and
	\begin{equation*}
		\alpha>2-\frac{1}{2H}. 
	\end{equation*}
	More recently in \cite{GaleatiGerencser22}, Galeati and Gerensc\'er push these results further to prove pathwise existence, uniqueness, and differentiability of the flow under the condition 
	\begin{equation*}
		\alpha >1-\frac{1}{2H}, 
	\end{equation*}
	without the assumption of compact support. 
	In contrast, using the local time approach presented in the current paper, in the one dimensional setting, \cite{HarangPerkowski2020} shows that a similar statement of existence and uniqueness holds if 
	\begin{equation*}
		\alpha>2+\frac{d}{2}-\frac{1}{2H}. 
	\end{equation*}
	The additional dimension dependency comes from the fact that the final set of $\bar{\Omega}\subset \Omega$ of full measure that that admits unique solutions is independent of the drift coefficient $b$, something which is an advantage in certain regularization by noise problems, see e.g.  \cite{bechtold} and \cite{harang2020pathwise}. \\

	\textbf{Acknowledgments:} The authors are grateful to Lucio Galeati for several fruitful discussions leading to an improvement of the results presented in this article. We wish to also thank an anonymous referee for several remark, suggestions and corrections that also greatly helped improve the results presented in this article. The first author acknowledges the funding from the European Research Council (ERC) under the European Union’s Horizon 2020 research and innovation program (grant agreement No.754362 and No.949981).The financial support by the German Science Foundation DFG through the Research Unit FOR 2402 is greatly acknowledged by the last author. \\
	
	\textbf{Declarations: }
	All the authors declare that they have no conflicts of interest. Data availability statement is not applicable in the context to the present article since all the results here are theoretical in nature and does not involve any data.

%	\bibliographystyle{plain}
%	\bibliography{bib}

\end{document}